\def\ait{\mathbb{A}}    
\def\ppit{\mathbb{P}} 
\def\qit{\mathbb{Q}} 
\def\cit{\mathbb{C}} 
\def\fit{\mathbb{F}}
\def\git{\mathbb{G}}
\newcommand{\pf}{{\bf Proof.~}}
\newcommand{\qed}{\hfill~~\mbox{$\Box$}}
\newenvironment{proof}{\smallskip \noindent \pf}{\qed \bigskip}
\newtheorem{theorem}{Theorem}[subsection]
\newtheorem{proposition}[theorem]{Proposition}
\newtheorem{definition}[theorem]{Definition}
\newtheorem{lemma}[theorem]{Lemma}
\newtheorem{corollary}[theorem]{Corollary}
\newtheorem{notation}[theorem]{Notation}
\newtheorem{remark}[theorem]{Remark}
\newtheorem{example}[theorem]{Example}
\begin{document}

\title{\bf A canonical Frobenius structure}
\author{\sc{ Antoine Douai} \\
Laboratoire J.A Dieudonn\'e, UMR 6621,\\ 
Universit\'e de Nice, Parc Valrose, F-06108 Nice Cedex 2 FRANCE.\\ 
E-mail : douai@unice.fr
\thanks{Key words: Laurent polynomials, Brieskorn lattice, Frobenius manifolds.
AMS classification: 32S40.}}

\date{}

\maketitle

\begin{center}{\bf Introduction}
\end{center}

This paper is the last of a series devoted to the construction of Frobenius 
structures on the base of a deformation of a convenient and nondegenerate Laurent polynomial $f$, defined on the torus $U=(\cit^{*})^{n}$. The motivations and the general setting are given in [DoSa] and [DoSa2]. In [D] we have explained how one can construct, using a result of Hertling and Manin [HeMa], Frobenius structures
which are determined by a restricted set of data (the ''initial conditions'').  However, these initial conditions are not unique and, starting from $f$, it is {\em a priori} possible to construct several Frobenius structures. The goal of this paper is to compare them, in fact to show that they are all isomorphic: finally, to a convenient and nondegenerate Laurent polynomial we associate 
{\em a canonical} Frobenius structure. 

Let us precise the situation: let 
$$F:U\times\cit^{r}\rightarrow\cit$$
be the subdiagram deformation of $f$ defined by
$$F(u,x)=f(u)+\sum_{i=1}^{r}x_{i}g_{i}(u)$$
where the $g_{i}$'s are some Laurent polynomials (we put $x=(x_{1},\cdots ,x_{r})$ and $u=(u_{1},\cdots ,u_{n})$). Here, subdiagram means that the Laurent polynomials $g_{1},\cdots ,g_{r}$ are linear combinations of monomials $u_{1}^{a_{1}}\cdots u_{n}^{a_{n}}$ where $a=(a_{1},\cdots , a_{n})$ belongs to the interior of the Newton polyhedron of $f$. One can attach to $F$ a Frobenius type structure on $\ait^{r}$, that is a t-uple
$$\fit =(\ait^{r}, E, \bigtriangledown , R_{0}, R_{\infty},\Phi ,g)$$
where $E$ is a free $\cit [x]$-module, $\Phi$ a Higgs field, $\bigtriangledown$ a flat connection on $E$, $g$ a metric, $R_{0}$ and $R_{\infty}$ two endomorphisms of $E$, these different objects satisfying some natural compatibility relations.
This is the initial condition and it is obtained by solving the Birkhoff problem for the Brieskorn lattice of $F$. 
Once $\fit$ is fixed, and up to the existence of a pre-primitive and homogeneous form, that is a $\bigtriangledown$-flat
section $\omega$ of $E$ satisfying an injectivity condition (IC), a generation condition (GC) and a homogeneity condition
(H), one can equip, following Hertling and Manin [HeMa], $(\cit^{\mu},0)$ with a Frobenius structure ($\mu$ is the global Milnor number of $f$).  

In this paper, we will take for $\omega$ the class of the volume form
$$\frac{du_{1}}{u_{1}}\wedge\cdots\wedge\frac{du_{n}}{u_{n}}$$
in $E$, the reason being that $\omega$ is the $\bigtriangledown$-flat extension to $E$ of the canonical primitive form attached to $f$ by  [DoSa, 4.d]. Then, $\omega$ satisfies the condition (IC) at least if the $g_{i}$'s are $\cit$-linearly independent, in which case we will say that the subdiagram deformation $F$ is {\em injective}.
Condition (H) follows from the homogeneity of the canonical primitive form attached to $f$ by {\em loc. cit.}. Let us have a closer look 
at (GC): the point is that this condition will set the deformation $F$ and thus the initial data $\fit$. $\omega$ will satisfy (GC) if
any element of 
$A_{f}$, the Jacobi algebra of $f$,
can be written as the class of a polynomial in $g_{1},\cdots , g_{r}, f$ with coefficients in $\cit$. Of course, this will be true if 
any element of $A_{f}$
can be written as the class of a polynomial in $g_{1},\cdots , g_{r}$ with coefficients in $\cit$, in which case we 
will say that $(g_{1},\cdots ,g_{r})$ is a {\em lattice} in $A_{f}$,
or if any element of $A_{f}$
can be written as the class of a polynomial in $f$ with coefficients in $\cit$. The latter case occurs when
the multiplication by $f$ on $A_{f}$ is {\em regular}, in particular if the critical values of $f$ are all distinct. 
We focuse now on the former case: let $(g_{1},\cdots ,g_{r})$ be a lattice in $A_{f}$. Then $\omega$ is pre-primitive and homogeneous but the desired Frobenius structure will depend {\em a priori} on the lattice $(g_{1},\cdots ,g_{r})$:
two different lattices could give two distinct Frobenius manifolds. We show:\\

\noindent {\bf Theorem 1.} {\em Let $f$ be a convenient and nondegenerate Laurent polynomial, $\mu$ its global Milnor number. Assume that there exists a lattice $(g_{1},\cdots ,g_{r})$ in $A_{f}$. Then the construction of
Hertling and Manin equips $(\cit^{\mu},0)$ with a canonical Frobenius structure. Up to isomorphism, this Frobenius structure doesn't depend on the lattice $(g_{1},\cdots ,g_{r})$.}\\

\noindent Thus, if there exists a lattice in $A_{f}$, it makes sense to speak of {\em the} Frobenius structure attached to a convenient and nondegenerate Laurent polynomial. Theorem 1 includes also the regular case: if the multiplication by $f$ is regular and if there exists a lattice in $A_{f}$ it follows from the discussion above that there are at least two ways to construct Frobenius structures. They will be isomorphic.

Up to a slightly stronger generation condition, we can give a global counterpart of Theorem 1: let  
$$F(u,x)=f(u)+\sum_{i=1}^{r}x_{i}g_{i}(u)$$ 
be an injective subdiagram deformation of $f$, $A_{F}$ its Jacobi algebra, which is a $\cit [x]$-module of finite type. We will say that $\omega$ satsifies $(GC)^{gl}$ (for the deformation $F$) if $(g_{1},\cdots ,g_{r})$ is a lattice in $A_{F}$, that is if any element of $A_{F}$ can be written as (the class of) a polynomial in $g_{1},\cdots , g_{r}$ with coefficients in $\cit [x]$. Let $a\in\cit^{r}$ and $\rho_{a}$ be the map defined by $\rho_{a}(x,y)=(x+a,y)$
for $(x,y)\in\cit^{r}\times (\cit^{\mu -r},0)$.\\

\noindent {\bf Theorem 2.} 
{\em Let $a\in\cit^{r}$ and assume that $\omega$ satisfies $(GC)^{gl}$ for $F$. Then,\\
$1)$ the canonical Frobenius structure attached by Theorem 1 to the convenient and nondegenerate Laurent polynomial $F_{a}:=F(\ .,a)$ is isomorphic to the pull-back by $\rho_{a}$ of the one attached to $f$,\\
$2)$ for any injective and subdiagram deformation $G$ of $f$, the canonical Frobenius structure attached by Theorem 1 to the convenient and nondegenerate Laurent polynomial $G_{a}:=G(\ .,a)$ is isomorphic to the pull-back by $\rho_{a}$ of the one attached to $f$.}\\

\noindent In other words, the canonical Frobenius structure attached by Theorem 1 to $f$ determines the canonical Frobenius structure attached by Theorem 1 to $G_{a}$ for any injective subdiagram deformation $G$. Theorems 1 and 2 are detailed in section 6.

This paper is organized as follows: in section 1, we recall the basic facts about the Frobenius type structures and their deformations. In section 2, we explain the construction of Hertling and Manin. Then we apply all this to a geometric situation: we define the canonical Frobenius type structures attached to a subdiagram deformation of a convenient and nondegenerate Laurent polynomial (section 3) and the canonical pre-primitive form (section 4). In section 5 we study the existence of universal deformations of the canonical Frobenius type structure. We show in particular that one can define global universal deformations along the space of the subdiagram monomials. Last, section 6 is devoted to the proof of Theorems 1 and 2.\\

\noindent {\bf Acknowledgements.} I thank C. Sabbah for many helpful discussions.\\

\noindent {\em Notations.} In this paper we will put $U=(\cit^{*})^{n}$, $u=(u_{1},\cdots,u_{n})$, $x=(x_{1},\cdots ,x_{r})$,
$$K=\cit [u,u^{-1}]=\cit [u_{1},\cdots ,u_{n},u_{1}^{-1},\cdots ,u_{n}^{-1}]$$
and
$$\frac{du}{u}=\frac{du_{1}}{u_{1}}\wedge\cdots\wedge\frac{du_{n}}{u_{n}}.$$
If $f$ is a Laurent polynomial, $A_{f}$ will denote its Jacobi algebra
$$\frac{K}{(\frac{\partial f}{\partial u_{1}},\cdots ,\frac{\partial f}{\partial u_{n}})}.$$

\section{Frobenius type structure}

\subsection{Frobenius type structure on a complex analytic manifold}
Let $M$ be a complex analytic manifold. Let us be given a t-uple
$$(M, E, \bigtriangledown , R_{0}, R_{\infty},\Phi ,g)$$
\noindent where\\

$\bullet$ $E$ is a locally free ${\cal O}_{M}$-module,\\

$\bullet$ $R_{0}$ and $R_{\infty}$ are ${\cal O}_{M}$-linear endomorphisms of $E$,\\

$\bullet$ $\Phi :E\rightarrow \Omega^{1}_{M}\otimes E$ is an ${\cal O}_{M}$-linear map,\\

$\bullet$ $g$ is a  {\em metric} on $E$, {\em i.e} a ${\cal O}_{M}$-bilinear form, symmetric and nondegenerate,\\

$\bullet$ $\bigtriangledown$ is a connection on $E$.\\

\begin{definition} The t-uple 
$$(M, E, \bigtriangledown , R_{0}, R_{\infty},\Phi ,g)$$
is a Frobenius type structure on $M$ if the following relations are satisfied:

\begin{center} $\bigtriangledown^{2}=0$, $\bigtriangledown (R_{\infty})=0$, $\Phi\wedge\Phi =0$, $[R_{0},\Phi ]=0$,
\end{center}
\begin{center} $\bigtriangledown (\Phi )=0$, $\bigtriangledown (R_{0})+\Phi =[\Phi ,R_{\infty}]$, 
\end{center} 
\begin{center}
 $\bigtriangledown (g)=0$, $\Phi^{*}=\Phi$, $R_{0}^{*}=R_{0}$, $R_{\infty}+R_{\infty}^{*}=rId$
\end{center}
for a suitable constant $r$. $^{*}$ denotes the adjoint with respect to $g$.
\end{definition}

\noindent We will use systematically the following lemma, which is a direct consequence of the definition:

\begin{lemma} Let $$(M, E, \bigtriangledown , R_{0}, R_{\infty},\Phi ,g)$$
be a Frobenius type structure on $M$. Then:\\
$1)$ $\bigtriangledown$ is flat.\\
$2)$ Let $\varepsilon$ be a $\bigtriangledown$-flat basis of $E$, $C=\sum_{i}C^{(i)}dx_{i}$ (resp. $B_{0}$, $B_{\infty}$)
the matrix of $\Phi$ (resp. $R_{0}$, $R_{\infty}$) in this basis. One has, for all $i$ and for all $j$,
\begin{center} $\frac{\partial C^{(i)}}{\partial x_{j}}=\frac{\partial C^{(j)}}{\partial x_{i}}$,
\end{center}
\begin{center} $[C^{(i)},C^{(j)}]=0$, 
\end{center} 
\begin{center}
 $[B_{0},C^{(i)}]=0$,
\end{center}
\begin{center}
 $C^{(i)}+\frac{\partial B_{0}}{\partial x_{i}}=[B_{\infty},C^{(i)}]$
\end{center}
\begin{center} $C^{(i)*}=C^{(i)}$, $B_{0}^{*}=B_{0}$, $B_{\infty}+B_{\infty}^{*}=rI$
\end{center}
($I$ is the identity matrix). The matrix $B_{\infty}$ is constant.
\end{lemma}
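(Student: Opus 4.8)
The plan is to unwind the definition of a Frobenius type structure in a $\bigtriangledown$-flat frame and read off the stated identities one by one. First I would invoke part~1), so I may choose a global (or local) $\bigtriangledown$-flat basis $\varepsilon$; this is legitimate precisely because $\bigtriangledown^{2}=0$ gives flatness, and in such a basis the connection form vanishes, so $\bigtriangledown = d$ acting componentwise. Writing the matrices $C=\sum_i C^{(i)}dx_i$, $B_0$, $B_\infty$ of $\Phi$, $R_0$, $R_\infty$ in this frame, each tensorial relation from the definition translates into a matrix relation with $\bigtriangledown$ replaced by $d$.

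Concretely: the relation $\Phi\wedge\Phi=0$ expands, since $\Phi\wedge\Phi = \sum_{i,j} C^{(i)}C^{(j)}\, dx_i\wedge dx_j$, into the vanishing of the antisymmetric part, i.e. $[C^{(i)},C^{(j)}]=0$. The relation $\bigtriangledown(\Phi)=0$ becomes $d(C)=0$ as a matrix-valued $2$-form, which is exactly the symmetry $\partial C^{(i)}/\partial x_j = \partial C^{(j)}/\partial x_i$. The relation $[R_0,\Phi]=0$ gives $[B_0,C^{(i)}]=0$ for every $i$. For $\bigtriangledown(R_0)+\Phi=[\Phi,R_\infty]$ I would note $\bigtriangledown(R_0)$ has matrix $dB_0 = \sum_i (\partial B_0/\partial x_i)\,dx_i$, $\Phi$ has matrix $\sum_i C^{(i)}dx_i$, and $[\Phi,R_\infty]$ has matrix $\sum_i [C^{(i)},B_\infty]\,dx_i$ (here one has to be slightly careful with the sign convention in $[\Phi,R_\infty]$, since $\Phi$ is $1$-form valued, but matching the stated identity $C^{(i)}+\partial B_0/\partial x_i = [B_\infty,C^{(i)}]$ fixes the convention). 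Finally $\bigtriangledown(R_\infty)=0$ gives $dB_\infty=0$, i.e. $B_\infty$ is constant, and the symmetry relations $\Phi^*=\Phi$, $R_0^*=R_0$, $R_\infty+R_\infty^*=r\,\mathrm{Id}$ pass directly to the matrix adjoints once one observes that $\bigtriangledown(g)=0$ lets one choose the flat frame so that $g$ has constant (indeed one may take it standard) matrix, so that the adjoint $^*$ is just matrix transposition up to that constant Gram matrix.

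The only genuinely substantive point, and the one I would treat most carefully, is the compatibility between the flat frame for $\bigtriangledown$ and the metric $g$: one needs that $\bigtriangledown(g)=0$ forces the Gram matrix of $g$ in \emph{a} $\bigtriangledown$-flat frame to be constant, so that ``adjoint with respect to $g$'' corresponds to an honest, $x$-independent notion of matrix adjoint; this is what makes the three symmetry identities meaningful as stated. Everything else is a bookkeeping exercise in expanding wedge products and commutators of $1$-form-valued endomorphisms in a frame where $\bigtriangledown=d$. I would present the argument as: (i) flatness and choice of flat frame; (ii) the four non-symmetry identities by direct expansion; (iii) constancy of $B_\infty$ from $\bigtriangledown(R_\infty)=0$; (iv) the symmetry identities via $\bigtriangledown(g)=0$. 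No step should require more than a line or two of computation once the frame is fixed.
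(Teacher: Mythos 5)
Your proposal is correct and is exactly the argument the paper intends: the paper states this lemma with no written proof beyond ``a direct consequence of the definition,'' and your unwinding in a $\bigtriangledown$-flat frame (flatness from $\bigtriangledown^{2}=0$, matrix expansion of each relation with $\bigtriangledown=d$, constancy of the Gram matrix from $\bigtriangledown(g)=0$ to make the adjoint relations meaningful) is precisely that direct consequence. The sign/convention caveat you flag for $[\Phi,R_{\infty}]$ and the care about the metric in the flat frame are the right points to single out; nothing further is needed.
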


\begin{remark} $1)$ If $M=\{point\}$, a Frobenius type structure on $M$ is a t-uple
$(E, R_{0}, R_{\infty},g)$ where $E$ is a finite dimensional $\cit$-vector space, $R_{0}$ and $R_{\infty}$ are endomorphisms of $E$, and $g$ is a bilinear, symmetric and nondegenerate form on $E$ such that
\begin{center}
$R_{0}^{*}=R_{0}$, $R_{\infty}+R_{\infty}^{*}=rId$ 
\end{center}
for a suitable constant $r\in\cit$.\\
$2)$ We will also consider Frobenius type structures on $\ait^{r}$ that is t-uples
$$(\ait^{r}, E, \bigtriangledown , R_{0}, R_{\infty},\Phi ,g)$$
where $E$ is a free $\cit [x]$-module. $\bigtriangledown$ , $R_{0}$, $R_{\infty}$, $\Phi$ and $g$ are defined as above (replace ${\cal O}_{M}$-linear by $\cit [x]$-linear) and satisfy the relations of Definition 1.1.1.
\end{remark}

\subsection{Construction of Frobenius type structures}
Let $\pi :\ppit^{1}\times M\rightarrow M$ be the projection, ${\cal E}:=\pi^{*}E$ and $\nabla$ the meromorphic connection on $E$ defined by 
$$\nabla =\pi^{*}\bigtriangledown +\frac{\pi^{*}\Phi }{\tau}-(\tau R_{0}+R_{\infty})\frac{d\tau }{\tau}$$
where $\tau$ is the coordinate on the chart centered at infinity. Then $\nabla$ is flat if and only if the t-uple
$$(M, E, \bigtriangledown , R_{0}, R_{\infty},\Phi )$$
is a Frobenius type structure on $M$ (without metric). 
Conversely, a trivial bundle ${\cal E}$ on $\ppit^{1}\times M$ equipped with a flat connection $\nabla$, with logarithmic poles along $\{\infty\}\times M$ and with poles of order 1 along $\{0\}\times M$, enables us to construct a Frobenius type structure (without metric)
$$(M, E, \bigtriangledown , R_{0}, R_{\infty},\Phi )$$
where $E:={\cal E}_{|\{0\}\times M}$ (see for instance [Sab, chapitre VII] for the details).
One can also get in this way a Frobenius type structure 
$$(M, E, \bigtriangledown , R_{0}, R_{\infty},\Phi ,g)$$
with metric (see [Sab, chapitre VI, 2.b]).
All Frobenius type structures that we will consider are constructed in this way.

\subsection{Deformations of Frobenius type structures}

Since one knows how to define the pull-back of a bundle equipped with a connection, one can define, using section 1.2, the pull-back of a Frobenius type structure: if 
$\varphi :N\rightarrow M$ where $M$ and $N$ are two complex analytic manifolds and if
${\cal F}$ is a Frobenius type structure on $M$ then $\varphi^{*}{\cal F}$ is a Frobenius type structure on $N$. 

\begin{definition}
$1)$ If $\varphi$ is a closed immersion, one says that ${\cal F}$ is a {\em deformation} of $\varphi^{*}{\cal F} $.\\
$2)$ Two deformations of a same Frobenius type structure are {\em isomorphic} if one comes from the other by a base change inducing an isomorphism on the corresponding tangent bundles.\\
$3)$ A deformation $\tilde{ {\cal F}}$ of 
a Frobenius type structure ${\cal F}$ on $M$ is {\em universal} if any other deformation of 
${\cal F}$ can be obtained from $\tilde{ {\cal F}}$ after a unique base change, inducing the identity on $M$.
\end{definition}

\noindent If it exists, a universal deformation is unique, up to isomorphism.

\section{Hertling and Manin's theorem. Construction of Frobenius manifolds}

Let
$${\cal F}=(M, E, \bigtriangledown , R_{0}, R_{\infty},\Phi ,g),$$
be a Frobenius type structure on $M$, which can be a punctual germ of a complex analytic manifold, a simply connected complex analytic manifold (the analytic case) or $\ait^{r}$ (the algebraic case). 

\subsection{Pre-primitive forms}

\subsubsection{The analytic case}

Suppose first that $M$ is a punctual germ of a complex analytic manifold.
Let $\omega$ be a $\bigtriangledown$-flat section of $E$. 

\begin{definition} The period map attached to $\omega$ is the map 
\begin{eqnarray}
\varphi_{\omega}: & \Theta_{M} & \rightarrow  E \\
                  & \xi        & \mapsto      -\Phi_{\xi}(\omega )
\end{eqnarray}
\end{definition}

 The period map $\varphi_{\omega}$ can be seen as a $\bigtriangledown$-flat differential form: in coordinates,
$$\varphi_{\omega}=-\sum_{i=1}^{r}\Phi_{\partial_{x_{i}}}(\omega )dx_{i}.$$
Assume moreover that $\omega =\varepsilon_{1}$ where 
$\varepsilon =(\varepsilon_{1},\cdots ,\varepsilon_{\mu})$ is a $\bigtriangledown$-flat basis of $E$.
With the notations of Lemma 1.1.2, one then gets 
$$\varphi_{\omega}=-\sum_{j=1}^{\mu}(\sum_{i=1}^{r}C^{(i)}_{j1}(x)dx_{i})\varepsilon_{j}.$$
Lemma 1.1.2 2) shows also that the differential form $\sum_{i=1}^{r}C^{(i)}_{j1}(x)dx_{i}$ is $d$-closed : 
let $\Gamma_{j1}$ be the function such that $\Gamma_{j1}(0)=0$ and $d\Gamma_{j1}(x)=\sum_{i=1}^{r}C^{(i)}_{j1}(x)dx_{i}.$
Define

\begin{eqnarray}
\chi_{\omega}^{\varepsilon}: & M & \rightarrow  E \\
                  & x & \mapsto   \sum_{j=1}^{\mu}\Gamma_{j1}(x)\varepsilon_{j}.  
\end{eqnarray}

\noindent The basis $\varepsilon$ being fixed, $\chi_{\omega}^{\varepsilon}$ can also be seen as a map

\begin{eqnarray}
\chi_{\omega}^{\varepsilon}: & M & \rightarrow  \cit^{\mu} \\
                  & x & \mapsto   (\Gamma_{11}(x),\cdots ,\Gamma_{\mu 1}(x)) 
\end{eqnarray}

\begin{definition} $\chi_{\omega}^{\varepsilon}$ is the primitive map attached to the  $\bigtriangledown$-flat section $\omega$ and to the basis $\varepsilon$.
\end{definition}

\begin{remark} Up to isomorphism, the map $\chi_{\omega}^{\varepsilon}$ doesn't depend on the basis $\varepsilon$. We will omit the index $\varepsilon$ : there will be no confusion because we will always work with M. Saito's canonical basis (see section 3.3).
\end{remark}

Let $m$ be the maximal ideal of ${\cal O}_{M}$. The index $^{o}$ will denote the operation ''modulo $m$''.

\begin{definition} Let $\omega$ be a $\bigtriangledown$-flat section of $E$. One says that $\omega$ is  {\em pre-primitive} if\\

$(GC)$ $\omega^{o}$ and its images under the iteration of the maps $R_{0}^{o}$ and $\Phi^{o}_{\xi}$ (for all $\xi$) generate $E^{o}$,\\

$(IC)$ $\varphi_{\omega}^{o}:\Theta_{M}^{o}\rightarrow E^{o}$ is injective.
\end{definition}

\begin{remark} 
$1)$ If $M=\{point\}$ the condition (IC) is empty. Assume moreover that $R_{0}$ is regular (i.e its characteristic polynomial is equal to its minimal polynomial): there exists $\omega$ such that 
$$\omega ,R_{0}(\omega ),\cdots ,R_{0}^{\mu -1}(\omega )$$
is a basis of $E$ over $\cit$. $\omega$ is thus pre-primitive.\\
$2)$ If (GC) is satisfied, it is also satisfied in the neighborhood of $0$: $E$ is then generated by $\omega$ and its images under iteration of the maps $R_{0}$ et $\Phi_{\xi}$ (for all $\xi$). 
\end{remark}

Let now $M$ be a simply connected complex analytic manifold. The period map attached to the $\bigtriangledown$-flat section $\omega$ is the ${\cal O}_{M}$-linear map is defined as in Definition 2.1.1. One defines also the primitive map $\chi_{\omega}^{\varepsilon}$, attached to the  $\bigtriangledown$-flat section $\omega$ and to the basis $\varepsilon$ : since $M$ is simply connected, $\chi_{\omega}^{\varepsilon}$ is holomorphic on $M$.
The definition of the pre-primitive forms depends now on the origin: if $a\in M$, 
$m^{a}$ will denote the maximal ideal of ${\cal O}_{M,a}$ and the index $^{a}$ the operation ''modulo $m^{a}$''.

\begin{definition} Let $\omega$ be a $\bigtriangledown$-flat section of $E$, $a\in M$. 
We will say that $\omega^{a}$ satisfies $(GC)$ if $\omega^{a}$ and its images under the iteration of the maps $R_{0}^{a}$ and $\Phi_{\xi}^{a}$ (for all $\xi$) generate $E^{a}$ and that $\omega^{a}$ satisfies $(IC)$ if
$$\varphi_{\omega}^{a}:\Theta_{M}^{a}\rightarrow E^{a}$$
is injective. One says that $\omega$ is  {\em pre-primitive} for the origin $a$ if $\omega^{a}$ satisfies $(GC)$ and $(IC)$.
\end{definition}

\subsubsection{The algebraic case} 

Let
$$\fit =(\ait^{r}, E, \bigtriangledown , R_{0}, R_{\infty},\Phi ,g)$$
be a Frobenius type structure on $\ait^{r}$.
The period map attached to $\omega$ is now a $\cit [x]$-linear map, defined on the 
Weyl algebra $\ait^{r}(\cit )=\cit [x]<\partial_{x}>$,

\begin{eqnarray}
\varphi_{\omega}: & \ait^{r}(\cit ) & \rightarrow  E \\
                  & \xi & \mapsto     -\Phi_{\xi}(\omega ) 
\end{eqnarray}

\noindent One defines also the primitive map $\chi_{\omega}^{\varepsilon}$, attached to the $\bigtriangledown$-flat section $\omega$ and to the basis $\varepsilon$. The index $^{a}$ will denote the operation ''modulo $(x-a)$''.

\begin{definition} Let $\omega$ be a $\bigtriangledown$-flat section of $E$.\\
$1)$ We will say that $\omega$  satisfies the condition $(GC)^{gl}$ if $\omega$ and its images under the iteration of the maps $R_{0}$ and 
$\Phi_{\xi}$ (for all $\xi$) generate the $\cit [x]$-module $E$
and that $\omega$ satisfies the condition $(IC)^{gl}$ if 
$$\varphi_{\omega}:\ait^{r}(\cit )\rightarrow E$$ 
is injective. We will say that $\omega$ is  {\em globally pre-primitive} if $\omega$ satisfies $(GC)^{gl}$ and $(IC)^{gl}$.\\
\noindent $2)$ Let $a\in\ait^{r}$. We will say that $\omega^{a}$ satisfies $(GC)$ if $\omega^{a}$ and its images under the iteration of the maps $R_{0}^{a}$ and $\Phi_{\xi}^{a}$ (for all $\xi$) generate $E^{a}$ and that $\omega^{a}$ satisfies $(IC)$ if $\varphi_{\omega}^{a}$
is injective. We will say that $\omega$ is pre-primitive for the origin $a$ if $\omega^{a}$ satisfies $(GC)$ et $(IC)$.
\end{definition}

\begin{remark} {\em (Analytization)} A Frobenius type structure $\fit$ on $\ait^{r}$ gives, after analytization, a Frobenius type structure
$$\fit^{an} =(\cit^{r}, E^{an}, \bigtriangledown^{an} , R_{0}^{an}, R_{\infty}^{an},\Phi^{an} ,g^{an})$$
on $\cit^{r}$. Notice that $E^{an}$ is canonically trivialized by a basis of (global) $\bigtriangledown$-flat sections.
A globally pre-primitive section $\omega$ of $E$ gives a pre-primitive section $\omega^{an}$ of $E^{an}$ for any choice of the origin.
\end{remark}

\subsection{Hertling and Manin's construction}

Let
$${\cal F}=(M, E, \bigtriangledown , R_{0}, R_{\infty},\Phi ,g)$$
be a Frobenius type structure on $M$, $\omega$ a $\bigtriangledown$-flat section of $E$ and $\chi_{\omega}$ the primitive map attached to $\omega$. If $\tilde{{\cal F}}$ is a deformation of ${\cal F}$, we will denote  $\tilde{\chi}_{\omega}$ ({\em resp.} $\tilde{\varphi}_{\omega}$) the primitive map ({\em resp.} the period map) attached to the flat extension of $\omega$. 
We will say that a $\bigtriangledown$-flat section of $E$ is {\em homogeneous} if it is an eigenvector of $R_{\infty}$.
Frobenius structures are defined in [Sab, VII.2].

\begin{theorem} Let $M$ be a germ of complex analytic manifold.\\
$1)$ {\em ([HeMa, theorem 2.5])} Assume that the Frobenius type structure ${\cal F}$
has a pre-primitive section $\omega$. Then ${\cal F}$ has a universal deformation. A deformation $\tilde{{\cal F}}$ of ${\cal F}$ 
is universal if and only if the primitive map (resp. period map) $\tilde{\chi}_{\omega}$ ( resp. $\tilde{\varphi}_{\omega}$) is a diffeomorphism (resp. an isomorphism).\\
$2)$ {\em ([HeMa, theorem 4.5])} A flat, pre-primitive and homogeneous section of the Frobenius type structure ${\cal F}$ defines, through the period map, a Frobenius structure on the base $\tilde{M}$ of any universal deformation of ${\cal F}$: $\tilde{M}$ is thus a Frobenius manifold.\\
$3)$ The Frobenius structures given by $2)$ on the bases of any two universal deformations are isomorphic. 
\end{theorem}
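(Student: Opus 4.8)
The plan is to deduce part $3)$ from parts $1)$ and $2)$ together with the uniqueness statement for universal deformations (the sentence ``If it exists, a universal deformation is unique, up to isomorphism'' following Definition 1.3.1). First I would fix the Frobenius type structure ${\cal F}$ with its flat, pre-primitive and homogeneous section $\omega$, and let $\tilde{{\cal F}}_{1}$ on $\tilde{M}_{1}$ and $\tilde{{\cal F}}_{2}$ on $\tilde{M}_{2}$ be two universal deformations of ${\cal F}$. By the uniqueness of universal deformations there is an isomorphism of deformations $\psi:\tilde{{\cal F}}_{1}\to\tilde{{\cal F}}_{2}$, i.e. $\psi$ is induced by a biholomorphism $\bar{\psi}:\tilde{M}_{1}\to\tilde{M}_{2}$ restricting to the identity on $M$, and $\psi$ carries $\bigtriangledown_{1}$, $R_{0,1}$, $R_{\infty,1}$, $\Phi_{1}$, $g_{1}$ to $\bar{\psi}^{*}$ of the corresponding objects of $\tilde{{\cal F}}_{2}$.

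The key step is then to check that $\psi$ intertwines the two constructions of part $2)$, so that $\bar{\psi}$ is an isomorphism of Frobenius manifolds. Concretely, the Frobenius structure of part $2)$ on $\tilde{M}_{i}$ is transported from the data $(\tilde{{\cal F}}_{i},\tilde{\omega}_{i})$ via the period map $\tilde{\varphi}_{\omega,i}$, which by part $1)$ is an isomorphism $\Theta_{\tilde{M}_{i}}\to\tilde{E}_{i}$; the multiplication, unit, Euler field, and metric on $\tilde{M}_{i}$ are obtained by pulling back along $\tilde{\varphi}_{\omega,i}$ the structures induced on $\tilde{E}_{i}$ by $\Phi_{i}$, $\omega$, $R_{0,i}$, $R_{\infty,i}$, $g_{i}$. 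Since $\psi$ sends $\tilde{\omega}_{1}$ to $\tilde{\omega}_{2}$ (both are the flat extension of the same $\omega$, and $\psi$ is compatible with the flat connections) and commutes with all the relevant operators up to $\bar{\psi}^{*}$, one gets $\tilde{\varphi}_{\omega,2}\circ d\bar{\psi}=\psi\circ\tilde{\varphi}_{\omega,1}$; chasing this identity through the definitions shows $\bar{\psi}$ pulls back each ingredient of the Frobenius structure on $\tilde{M}_{2}$ to the corresponding one on $\tilde{M}_{1}$.

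I expect the main obstacle to be purely bookkeeping: one must be careful that ``isomorphism of deformations'' in Definition 1.3.1, part $2)$, really does give a morphism compatible with \emph{all} of $\bigtriangledown$, $R_{0}$, $R_{\infty}$, $\Phi$ and $g$ (not merely with the bundle-with-connection $\nabla$ on $\ppit^{1}\times M$), and that the flat extension $\tilde{\omega}$ of $\omega$ is preserved — this is where simple-connectedness and flatness of $\bigtriangledown$ enter, guaranteeing that a flat section is determined by its value at the base point, which is fixed. Granting that, the verification that the period map is natural and that naturality of the period map implies an isomorphism of Frobenius manifolds is a routine unwinding of the formulas in [Sab, VI--VII] and [HeMa, \S4], and needs no new idea. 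Hence part $3)$ follows.
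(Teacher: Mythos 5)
Your treatment of part $3)$ is correct, but it runs in the opposite direction from the paper's own (terse) argument, and it is worth seeing how the two relate. The paper does not invoke the abstract uniqueness of universal deformations at all: it defines the base map directly from the primitive maps, writing $\tilde{\chi}_{\omega}=\tilde{\chi}_{\omega}'\circ\psi$ (legitimate because, by part $1)$, both primitive maps are diffeomorphisms), then notes that $\tilde{\varphi}_{\omega}=\tilde{\varphi}_{\omega}'\circ T\psi$ — essentially for free, since in a flat basis the period map is the differential of the primitive map — and concludes that the tangent isomorphism $T\psi$ carries the structures from $\Theta_{\tilde{M}}$ onto $\Theta_{\tilde{M}'}$. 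You instead start from the uniqueness statement following Definition 1.3.1 to produce an isomorphism of deformations, and then verify that it preserves the flat extension of $\omega$ and intertwines the period maps. The ingredients are the same (naturality of the period/primitive map under base change, transport of the structures along the tangent isomorphism), but the packaging differs: your route makes explicit the compatibility checks (preservation of the flat section, of $\bigtriangledown$, $R_{0}$, $R_{\infty}$, $\Phi$, $g$) that the paper leaves implicit in the phrase ``carries the structures'', whereas the paper's route gets the period-map intertwining immediately and leaves the identification of $\psi$ with the universal base change implicit. Both are sound; note also that, like the paper, you take parts $1)$ and $2)$ as quoted from [HeMa], so omitting their proofs (the paper only sketches $2)$, via transport along the period map and homogeneity of the flat extension of $\omega$) is consistent with what is actually proved there.
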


\begin{proof} 
$1)$ In brief, condition (GC) shows that one can construct deformations of the Frobenius type structure 
and condition (IC) is then used to show the universality of some of them: we will come back to this in section 5.2.\\ 
$2)$ It follows from $1)$ that ${\cal F}$ has a universal deformation 
$\tilde{{\cal F}}=(\tilde{M}, \tilde{E}, \tilde{\bigtriangledown} , \tilde{R}_{0}, \tilde{R}_{\infty}, \tilde{\Phi} , \tilde{g}).$
Moreover, the period map associated with the flat extension of the pre-primitive form is an isomorphism because the deformation is universal. One can thus carry the structures defined on $\tilde{E}$ onto $\Theta_{\tilde{M}}$, the sheaf of holomorphic vector fields on $\tilde{M}$, and gets, by definition, a ({\em a priori} non homogeneous) Frobenius structure on $\tilde{M}$. If moreover the pre-primitive form is homogeneous, its flat extension is also homogeneous because $R_{\infty}$ carries flat sections onto flat sections : this gives the homogeneity of the Frobenius structure. This shows that $\tilde{M}$ is a Frobenius manifold.\\
$3)$ Let $\tilde{{\cal F}}$ and $\tilde{{\cal F}}'$ be two universal deformations of ${\cal F}$, with bases $\tilde{M}$ and $\tilde{M}'$, $\tilde{\chi}_{\omega}$ ({\em resp.} $\tilde{\varphi}_{\omega}$)
and
$\tilde{\chi}_{\omega}'$ ({\em resp.} $\tilde{\varphi}_{\omega}'$) the respective primitive ({\em resp.} period) maps : these are diffeomorphisms ({\em resp.} isomorphisms). Write $\tilde{\chi}_{\omega}=\tilde{\chi}_{\omega}'\circ\psi$. Then $\tilde{\varphi}_{\omega}=\tilde{\varphi}_{\omega}'\circ T\psi$
where
$$T\psi: \Theta_{\tilde{M}}\rightarrow\Theta_{\tilde{M}'} $$
is the linear tangent map: it is an isomorphism which carries the structures from $\Theta_{\tilde{M}}$ onto $\Theta_{\tilde{M}'}$.
\end{proof}

\begin{example} Assume that $M=\{point\}$ and keep the notations of Remark 2.1.5 1). The Frobenius type structure
$(E, R_{0}, R_{\infty},g)$ has a universal deformation if $R_{0}$ is regular. This result was already known by B. Malgrange [Mal]. One gets a Frobenius structure on the base of any universal deformation of a regular Frobenius type structure if moreover $\omega$ is {\em homogeneous}. This is the setting of [DoSa2]. 
\end{example}

\section{Frobenius type structures and Laurent polynomials}

We explain here, and it is the first step, how to attach a Frobenius type structure on $\ait^{r}$ to  any convenient and nondegenerate Laurent polynomial.\\

\noindent {\em Until the end of this paper, $f$ will denote a convenient and nondegenerate Laurent polynomial, defined on the torus $U$}. 

\subsection{Subdiagram deformations} 
 If $f$ has a finite number of critical points, $\mu (f)$ will denote its global Milnor number, that is the sum of the Milnor numbers at its critical points. One attaches to $f$ its Newton polyhedron and an increasing filtration ${\cal N}_{\bullet}$ on $K$, indexed by $\qit$ and normalized such that $f\in {\cal N}_{1}K$ (see [K], we keep here the notations of [D]): this is the Newton filtration. This filtration induces a Newton filtration ${\cal N}_{\bullet}$ on $\Omega^{n}(U)$ such that $du/u\in {\cal N}_{0}\Omega^{n}(U)$.  Define
$${\cal N}_{<1}K:=\cup_{\alpha <1}{\cal N}_{\alpha}K,$$
which is a finite dimensional $\cit$-vector space, and $\nu:=\dim_{\cit}{\cal N}_{<1}K.$
Let
$$F:U\times \cit^{r}\rightarrow \cit$$
be the deformation of $f$ defined by  
$$F(u,x)=f(u)+\sum_{i=1}^{r}x_{i}g_{i}(u),$$
the $g_{i}$'s being Laurent polynomials. 

\begin{definition} $1)$ A Laurent polynomial $g$ is {\em subdiagram} if $g\in {\cal N}_{<1}K$.\\ 
$2)$ $F$ is a {\em subdiagram} deformation of $f$ if the Laurent polynomials $g_{i}$,  $i=1,\cdots ,r$, are subdiagram.\\
$3)$ The subdiagram deformation $F$ is {\em injective} if the $g_{i}$'s are $\cit$-linearly independent, {\em maximal} if it is injective and if  $r=\nu$ and {\em surjective} if $(g_{1},\cdots ,g_{r})$ is a lattice in $A_{f}$, {\em i.e} if every element in $A_{f}$ can be written as (the class of) a polynomial in 
$g_{1},\cdots ,g_{r}$ with coefficients in $\cit$.
\end{definition}

\begin{remark} Let $F_{1}^{max}$ and $F_{2}^{max}$ be two maximal subdiagram deformations. Then $F_{1}^{max}$ is surjective 
if and only if $F_{2}^{max}$ is so. In particular, if a maximal subdiagram deformation is surjective then any maximal subdiagram deformation will be so.
 \end{remark}

\subsection{The Brieskorn lattice of a subdiagram deformation}

 Let $F$ be a subdiagram deformation of $f$, $G_{0}$ ({\em resp.} $G$) its Brieskorn lattice ({\em resp.} its Gauss-Manin system),
$G_{0}^{o}$ ({\em resp.} $G^{o}$) the Brieskorn lattice ({\em resp.} the Gauss-Manin system) of $f$. One has
$$G_{0}^{o}=\frac{\Omega^{n}(U)[\theta ]}{(\theta d-df\wedge)\Omega^{n-1}(U)[\theta ]},$$
$$G_{0}=\frac{\Omega^{n}(U)[x,\theta ]}{(\theta d_{u}-d_{u}F\wedge)\Omega^{n-1}(U)[x,\theta ]}$$
where the notation $d_{u}$ means that the differential is taken with respect to $u$,
$$G=\frac{\Omega^{n}(U)[x,\theta ,\theta^{-1} ]}{(\theta d_{u}-d_{u}F\wedge)\Omega^{n-1}(U)[x,\theta ,\theta^{-1}]}$$
and
$$G^{o}=\frac{\Omega^{n}(U)[\theta ,\theta^{-1}]}{(\theta d-df\wedge)\Omega^{n-1}(U)[\theta ,\theta^{-1} ]}.$$
$G_{0}$ is a $\cit [x,\theta ]$-module and $G_{0}^{o}$ is a $\cit [\theta ]$-module. 
One defines a connection $\nabla$ on $G$ putting, for $\omega\in\Omega^{n}(U)[x]$, 
$$\theta^{2}\nabla_{\theta}(\omega\theta^{p})=F\omega\theta^{p}+p\omega\theta^{p+1}$$
and
$$\nabla_{\partial_{x_{j}}}(\omega\theta^{p})
=\partial_{x_{j}}(\omega )\theta^{p}-\frac{\partial F}{\partial x_{j}}\omega\theta^{p-1}.$$
Notice that these two operators commute with $\theta d_{u}-d_{u}F\wedge$ and that
$G_{0}$ is stable under $\theta^{2}\nabla_{\theta}.$
One defines in the same way the Brieskorn lattice $G_{0}^{a}$ and the Gauss-Manin system $G^{a}$ of $F_{a}:=F(.,a)$. 

Recall that the spectrum of $(G_{0}^{o},G^{o})$ is the set of the $\mu (f)$ rational numbers
$(\alpha_{1},\cdots ,\alpha_{\mu})$
such that
$$\sharp (i|\alpha_{i}=\alpha )=\dim_{\cit}\frac{ {\cal N}_{\alpha}\Omega^{n}(U)}{(df\wedge\Omega^{n-1}(U))\cap {\cal N}_{\alpha}\Omega^{n}(U)+{\cal N}_{<\alpha}\Omega^{n}(U)}.$$

\begin{theorem}
$1)$ $\mu (f)<+\infty$ and $G_{0}^{o}$ is a free $\cit [\theta ]$-module of rank $\mu (f)$.\\ 
$2)$ The Brieskorn lattice $G_{0}$ of any subdiagram deformation $F$ of $f$ is free, of rank $\mu (f)$, over $\cit [x,\theta ]$.\\
$3)$ Let $F$ be a subdiagram deformation of $f$. For any value $a$ of the parameter, one has
$\mu (F_{a})=\mu (f)$ and the spectrum of $(G_{0}^{a},G^{a})$ is equal to the one of $(G_{0}^{o},G^{o})$.

\end{theorem}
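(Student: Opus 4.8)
The plan is to prove the three assertions of Theorem 3.2.1 together, by reducing everything to the case $F=f$ (the point $M=\{point\}$) and then propagating the result over the parameter space by a flatness/specialization argument. The key structural input is that subdiagram deformations do not change the Newton filtration in an essential way, so that the Newton-filtered pieces computing the spectrum are "constant in $x$".

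First I would treat assertion $1)$. Convenience and nondegeneracy of $f$ give, by Kouchnirenko's theorem, that $f$ has only isolated critical points and $\mu(f)=n!\,\mathrm{Vol}(\text{Newton polyhedron of }f)<+\infty$. For freeness of $G_0^o$ over $\cit[\theta]$ of rank $\mu(f)$, I would invoke the standard structure theory of the Brieskorn lattice of a convenient nondegenerate Laurent polynomial (this is exactly the setting of [DoSa], and the statement is [DoSa, Theorem 1.-] together with the Newton-filtration computation of $\mathrm{gr}$; see also [K]). Concretely, one shows that the Newton filtration induces a filtration on $G_0^o$ whose graded pieces are identified with the graded Jacobian ring of $f$, which has total dimension $\mu(f)$; since $\theta$ acts with "positive weight" on the graded object, a graded basis lifts to a $\cit[\theta]$-basis of $G_0^o$. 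This also produces the spectral numbers $(\alpha_1,\dots,\alpha_\mu)$ via the displayed formula, which is just the definition of the dimensions of the graded pieces.

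Next, assertion $2)$. Here the point is that $d_uF\wedge = d_uf\wedge + \sum_i x_i\, dg_i\wedge$, and each $g_i\in{\cal N}_{<1}K$ is \emph{subdiagram}, so multiplication/derivation by $g_i$ strictly lowers Newton weight. Therefore the Newton filtration on $\Omega^n(U)[x,\theta]$ induces on $G_0$ a filtration whose associated graded is, after killing the $x_i$'s (which also have negative weight), isomorphic to $\mathrm{gr}^{\cal N}G_0^o\otimes_\cit\cit[x]$ — i.e.\ the "principal part" of the relations is unchanged by the subdiagram perturbation. A graded Nakayama-type argument then shows that any lift to $G_0$ of a $\cit[\theta]$-basis of $\mathrm{gr}^{\cal N}G_0^o$ is a $\cit[x,\theta]$-basis of $G_0$: one gets a surjection $\cit[x,\theta]^{\mu(f)}\twoheadrightarrow G_0$ from generation modulo the filtration, and injectivity follows by comparing Hilbert-type functions (or by noting $G_0$ is $\cit[x]$-flat so no relations can appear). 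I expect this graded comparison — making precise that "subdiagram $\Rightarrow$ the leading term of the Koszul-type differential $\theta d_u - d_uF\wedge$ is that of $f$" and that the resulting graded module is free over $\cit[x,\theta]$ — to be the main obstacle, and the place where the hypotheses (convenient, nondegenerate, subdiagram) are really used.

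Finally, assertion $3)$. For $a\in\cit^r$, $F_a$ is again convenient with the same Newton polyhedron as $f$ (the $g_i$ are subdiagram, hence supported strictly inside), so by Kouchnirenko $\mu(F_a)=n!\,\mathrm{Vol}=\mu(f)$, and nondegeneracy of $F_a$ for the argument — if not automatic — follows because nondegeneracy only concerns the principal parts along the faces of the common Newton polyhedron, which coincide with those of $f$. For the spectrum, I would specialize the filtered free module of $2)$ at $x=a$: since $G_0$ is $\cit[x]$-free and the Newton filtration is compatible with specialization (the subdiagram terms still have negative weight after setting $x=a$), one gets $\mathrm{gr}^{\cal N}G_0^a\cong\mathrm{gr}^{\cal N}G_0^o$ as filtered $\cit[\theta]$-modules, hence the same graded dimensions, hence the same spectral numbers by the displayed formula applied to $F_a$. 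Alternatively one can phrase this as: the spectrum of a subdiagram deformation is constant along the parameter because it is computed from $\mathrm{gr}^{\cal N}$, which is locally constant (indeed constant) in $x$. I would close by remarking that $1)$ and $2)$ together already give $G_0$ free of rank $\mu(f)$, so the Birkhoff problem invoked in section 3.3 is being solved on a bundle of the expected rank.
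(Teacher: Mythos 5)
Your outline follows the same strategy as the paper -- Kouchnirenko for $\mu(f)<+\infty$ and for $\mu(F_a)=\mu(f)$ (same Newton polyhedron, nondegeneracy only involves the faces), plus the observation that subdiagram terms strictly drop Newton order so the Newton-graded objects are unchanged -- but the routes diverge in how this is implemented. For $1)$ and $2)$ the paper simply cites [DoSa, Remark 4.8] and [D, Proposition 2.2.1]; the engine behind both, and behind the graded comparison you rightly flag as the main obstacle, is Kouchnirenko's division theorem in the form of [DoSa, Lemma 4.6], which your sketch never names: it is what controls the Newton order of the coefficients and rules out the hidden cancellations that your Hilbert-function/flatness argument would otherwise have to exclude. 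For $3)$ the paper is more direct than your specialization argument and does not use part $2)$ at all: if $\sum_i a_iu_i\frac{\partial f}{\partial u_i}\in{\cal N}_{\alpha}K$, the division theorem lets one take $a_i\in{\cal N}_{\alpha-1}K$, and since $u_i\frac{\partial g_j}{\partial u_i}\in{\cal N}_{<1}K$ the extra terms fall into ${\cal N}_{<\alpha}$, giving
$$(df\wedge\Omega^{n-1}(U))\cap {\cal N}_{\alpha}+{\cal N}_{<\alpha}
=(dF_{a}\wedge\Omega^{n-1}(U))\cap {\cal N}_{\alpha}+{\cal N}_{<\alpha},$$
from which the equality of spectra is read off the defining dimension formula. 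Your route through $\cit[x]$-freeness of $G_0$ and ``compatibility of the Newton filtration with specialization'' is not wrong, but the asserted isomorphism $\mathrm{gr}^{\cal N}G_0^{a}\cong\mathrm{gr}^{\cal N}G_0^{o}$ is exactly this division-theorem statement in disguise, so it must be made explicit rather than folded into a flatness remark; the paper's version buys the spectrum statement with no reference to the family, while yours has the mild advantage of exhibiting $3)$ as a formal consequence of the filtered freeness proved in $2)$.
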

\begin{proof} From [K], one gets $\mu (f)<+\infty$ because $f$ is convenient and nondegenerate. The remaining assertions of $1)$ and $2)$ follow from the division theorem of Kouchnirenko, as stated in [DoSa, Lemma 4.6]: see [DoSa, Remark 4.8] for 1) and [D, Proposition 2.2.1] for 2). Let us show $3)$: if $f$ is convenient and nondegenerate, $F_{a}$ is so and the Newton polyhedra of $f$ and $F_{a}$ are the same : thus, the first assertion follows from [K]. If $\sum_{i}a_{i}u_{i}\frac{\partial f}{\partial u_{i}}\in {\cal N}_{\alpha}K$ one may assume, because of the division theorem quoted above, that $a_{i}\in {\cal N}_{\alpha -1}K$. Since the $g_{j}$'s are subdiagram, one gets $u_{i}\frac{\partial g_{j}}{\partial u_{i}}\in {\cal N}_{<1}K$. It follows that 
$$(df\wedge\Omega^{n-1}(U))\cap {\cal N}_{\alpha}+{\cal N}_{<\alpha}
=(dF_{a}\wedge\Omega^{n-1}(U))\cap {\cal N}_{\alpha}+{\cal N}_{<\alpha}.$$
This gives the second assertion.
\end{proof}

\subsection{The canonical Frobenius type structure of a subdiagram deformation}

Assume, and it is the starting point, that one has solved the Birkhoff problem for $G_{0}^{o}$, that is that one has found a basis $\varepsilon^{o}=(\varepsilon^{o}_{1},\cdots ,\varepsilon^{o}_{\mu})$ (we put here $\mu =\mu (f)$)
of $G_{0}^{o}$ over $\cit [\theta ]$, adapted to the microlocal Poincare duality (see [Sai], [DoSa2, p. 9] and also [D, paragraphe 3.3]), in which the matrix of the Gauss-Manin connection takes the form
$$-(\tau A_{0}^{o}+A_{\infty})\frac{d\tau }{\tau}$$
(we put $\tau :=\theta^{-1}$). This means that one can extend $G_{0}^{o}$ to a trivial bundle on $\ppit^{1}$ equipped with a meromorphic connection with logarithmic poles along $\tau =0$ and poles of order $1$ along $\tau =\infty$. One gets, using section 1.2, a Frobenius type structure 
$$(E^{o},R_{0}^{o},R_{\infty},g^{o})$$
on a point where\\

$\bullet$ $E^{o}=G_{0}^{o}/\theta G_{0}^{o}=\Omega^{n}(U)/df\wedge \Omega^{n-1}(U)$,\\

$\bullet$ $R_{0}^{o}$ ({\em resp.} $R_{\infty}$) is the endomorphism $E^{o}$ whose matrix is $A_{0}^{o}$
({\em resp.} $A_{\infty}$) in the basis of $E^{o}$ induced by $\varepsilon^{o}$.\\

\noindent It follows from section 3.2 that $R_{0}^{o}$ is the multiplication by $f$ on $E^{o}$.

 In this paper, we will always consider the canonical solution of the Birkhoff problem given by M. Saito's method  [Sai], [DoSa, Appendix B], [D1, section 6]. The endomorphism $R_{\infty}$ is in particular semi-simple and its eigenvalues run through the spectrum of  $(G_{0}^{o},G^{o})$. The basis $\varepsilon^{o}$ is homogeneous, that is
$R_{\infty}(\varepsilon^{o}_{i})=\alpha_{i}\varepsilon^{o}_{i}$
for all $i$, and we order $\varepsilon^{o}$ such that
$$\alpha_{1}\leq\cdots\leq\alpha_{\mu}.$$
Since $f$ is a convenient and nondegenerate Laurent polynomial, one has
$$\varepsilon^{o}_{1}=[\frac{du}{u}]$$
where $[\ ]$ denotes the class in $G_{0}^{o}$, $\alpha _{1}=0<\alpha_{2}$
(the 'multiplicity' of $\alpha_{1}$ in the spectrum is equal to $1$) and $\alpha_{\mu}=n>\alpha_{\mu -1}$ (see [DoSa, 4.d]).
 To any convenient and nondegenerate Laurent polynomial $f$, one attaches in this way a canonical Frobenius type structure on a point $(E^{o},R_{0}^{o},R_{\infty},g^{o}).$

\begin{theorem} 
Let $F$ be a subdiagram deformation of $f$ and 
$$E=G_{0}/\theta G_{0}=\Omega^{n}(U)[x]/d_{u}F\wedge\Omega^{n-1}(U)[x].$$
Then there exists a unique Frobenius type structure 
$$\fit_{o}=(\ait^{r}, E, \bigtriangledown , R_{0}, R_{\infty},\Phi ,g)$$
on $\ait^{r}$ such that
$$i^{*}_{\{0\}}\fit_{o}=(E^{o},R_{0}^{o},R_{\infty},g^{o}).$$
Moreover, for any value $a$ of the parameter, one has
$$i^{*}_{\{a\}}\fit_{o}=
(E^{a},R_{0}^{a},R_{\infty},g^{a}),$$
$(E^{a},R_{0}^{a},R_{\infty},g^{a})$ denoting the canonical Frobenius type structure attached to $F_{a}:=F(.,a)$.

\end{theorem}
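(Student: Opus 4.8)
The plan is to build the bundle on $\ppit^1\times\ait^r$ first and then read off the Frobenius type structure by restriction. Concretely, I would consider the Gauss–Manin system $G$ of $F$ together with the connection $\nabla$ introduced above; by Theorem 3.2.3 the Brieskorn lattice $G_0$ is a free $\cit[x,\theta]$-module of rank $\mu$ and is stable under $\theta^2\nabla_\theta$, so its Rees-type extension across $\tau=\theta^{-1}=0$ and the behaviour along $\tau=\infty$ give a bundle on $\ppit^1\times\ait^r$. The key point is to produce a basis of $G_0$ over $\cit[x,\theta]$ which restricts to M.~Saito's canonical basis $\varepsilon^o$ at $x=0$ and in which $\nabla$ is logarithmic along $\tau=0$ with pole of order one along $\tau=\infty$; this is exactly the content of solving the Birkhoff problem in families, and I would invoke the results of section~3.2 together with [D, Proposition~2.2.1] (freeness of $G_0$) and the division theorem of Kouchnirenko as quoted. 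Once such a basis $\varepsilon=(\varepsilon_1,\dots,\varepsilon_\mu)$ is fixed, section~1.2 mechanically produces the t-uple $\fit_o=(\ait^r,E,\bigtriangledown,R_0,R_\infty,\Phi,g)$: $\bigtriangledown$ is the connection with respect to $x$ on $E=G_0/\theta G_0$ for which $\varepsilon$ is flat, $\Phi$ comes from the $dx_i$-part of $\nabla$, $R_0$ is multiplication by $F$ on $E$ (as in the punctual case where $R_0^o$ is multiplication by $f$), and $R_\infty$ is the residue along $\tau=\infty$, while $g$ is the pairing induced by the microlocal Poincaré duality. The flatness of $\nabla$ on $G$ then forces the compatibility relations of Definition~1.1.1, so $\fit_o$ is a Frobenius type structure.

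For the existence half of the statement I would then check $i^*_{\{0\}}\fit_o=(E^o,R_0^o,R_\infty,g^o)$: this is automatic because the chosen family basis restricts to $\varepsilon^o$, so $E=G_0/\theta G_0$ restricts to $E^o=G_0^o/\theta G_0^o$, $R_0$ restricts to $R_0^o$ (multiplication by $F_0=f$), $R_\infty$ is already constant hence equals $R_\infty$, and $g$ restricts to $g^o$. For the statement at an arbitrary $a\in\cit^r$, I would use Theorem~3.2.3~3): $F_a$ is again convenient and nondegenerate with the same Newton polyhedron, $\mu(F_a)=\mu(f)$, and the spectrum of $(G_0^a,G^a)$ equals that of $(G_0^o,G^o)$. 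Hence M.~Saito's canonical construction applies verbatim to $F_a$ and yields a canonical Frobenius type structure $(E^a,R_0^a,R_\infty,g^a)$ on a point; restricting the family bundle to $\{\tau=\mathrm{anything}\}\times\{a\}$ gives a trivial bundle on $\ppit^1$ with logarithmic pole at $\tau=0$ and order-one pole at $\tau=\infty$, which by uniqueness of M.~Saito's canonical solution of the Birkhoff problem (the normalization being dictated by the spectrum and the microlocal duality, which are $a$-independent) must coincide with the one defining $(E^a,R_0^a,R_\infty,g^a)$. This identifies $i^*_{\{a\}}\fit_o$ with the canonical structure of $F_a$.

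Uniqueness of $\fit_o$ is the cleanest part: if $\fit_o$ and $\fit_o'$ are two Frobenius type structures on $\ait^r$ restricting to $(E^o,R_0^o,R_\infty,g^o)$ at $0$, then both underlie the trivial bundle $\mathcal E$ on $\ppit^1\times\ait^r$ attached to $(G_0,\nabla)$ via section~1.2, and a Frobenius type structure is equivalent to such a bundle with its connection; since $G_0$ and $\nabla$ are canonically attached to $F$, the bundle is unique, and a flat $\bigtriangledown$-trivialization extending $\varepsilon^o$ is unique on the simply connected (indeed affine) base, so all the data agree. Alternatively and more concretely, Lemma~1.1.2 shows that in the flat basis $\varepsilon$ the matrices $C^{(i)},B_0,B_\infty$ are determined by the ODE system $C^{(i)}+\partial B_0/\partial x_i=[B_\infty,C^{(i)}]$ together with the integrability conditions and the initial values at $x=0$; a Cauchy–Kovalevskaya / Frobenius-integrability argument then gives uniqueness of the solution, hence of $\fit_o$.

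The main obstacle is the first step: constructing the family solution of the Birkhoff problem, i.e. a $\cit[x,\theta]$-basis of $G_0$ specializing to $\varepsilon^o$ and putting $\nabla$ in the desired normal form. The delicate inputs are the freeness of $G_0$ over $\cit[x,\theta]$ and the fact that the subdiagram hypothesis makes the Newton filtration behave uniformly in $x$ (so that $u_i\partial g_j/\partial u_i\in\mathcal N_{<1}K$ and the associated graded pieces are $x$-independent, exactly as in the proof of Theorem~3.2.3~3)); with those in hand, the graded Birkhoff problem is solved by $\varepsilon^o$ and one lifts it over $\cit[x]$ by the usual triangularity argument on the Newton degree. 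Everything after that — verifying the Definition~1.1.1 relations, the two restriction identities, and uniqueness — is formal or follows from results already established in the excerpt and in [D], [DoSa].
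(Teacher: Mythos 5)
Your overall route is the one the paper itself follows: produce a $\cit[x,\theta]$-basis $\varepsilon$ of $G_{0}$ in which $\nabla$ takes the Birkhoff normal form and which restricts at $x=0$ to M.~Saito's canonical basis $\varepsilon^{o}$ (the paper obtains this in one stroke from [D, Corollaire 3.1.3]), then read off $\fit_{o}$ via section~1.2, get uniqueness from the classical uniqueness of such a family solution ([Mal], [Sab, p.~209]), and identify the restriction at any $a$. Two of your justifications, however, do not hold as stated. The ``alternative'' uniqueness argument --- that the relations of Lemma~1.1.2, notably $C^{(i)}+\partial B_{0}/\partial x_{i}=[B_{\infty},C^{(i)}]$, together with the initial values at $x=0$ determine $C$ and $B_{0}$ by a Cauchy--Kovalevskaya/integrability argument --- is false: that system with prescribed fiber at the origin has many solutions, and this freedom is precisely what Lemma~5.1.1 (i.e.\ [HeMa, Theorem 2.5]) exploits to construct deformations parametrized by the functions $f_{i1}$. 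The uniqueness in the theorem is uniqueness of the family solution of the Birkhoff problem for the given $(G_{0},\nabla)$ extending $\varepsilon^{o}$ (equivalently, of the extension of the lattice across $\tau=\infty$), i.e.\ of structures arising from the geometry with $R_{0}$ the multiplication by $F$ and $\Phi^{(i)}$ the multiplication by $-g_{i}$; your first argument is the right one and the second should be dropped.

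The more serious gap concerns the ``moreover'' clause. You argue that the restriction of the family bundle at $a$ is a solution of the Birkhoff problem for $G_{0}^{a}$ whose ``normalization is dictated by the spectrum and the microlocal duality, which are $a$-independent,'' hence must coincide with M.~Saito's canonical solution for $F_{a}$. That characterization is not correct: solutions of the Birkhoff problem adapted to the duality and having the prescribed spectrum are in general not unique (they correspond to choices of opposite filtrations), and M.~Saito's solution is singled out by its compatibility with the $V$-filtration --- here the Newton filtration --- not by the spectrum alone. The paper's justification is different: the construction of the family basis in [D] is carried out uniformly in $x$ with respect to the $V$=Newton filtration, which (as in Theorem~3.2.1~3), your ``3.2.3'') is constant along a subdiagram deformation, so the restriction of $\varepsilon$ at every $a$ is \emph{by construction} the canonical solution for $G_{0}^{a}$; this is exactly the sentence ``the construction in [D] shows also\dots'' in the paper's proof. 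You do invoke the constancy of the Newton filtration for the existence step, so the needed ingredient appears in your plan, but as written the identification $i^{*}_{\{a\}}\fit_{o}=(E^{a},R_{0}^{a},R_{\infty},g^{a})$ rests on a false uniqueness principle and must instead be rerouted through the compatibility of the family construction with the filtration.
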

\begin{proof} It follows from [D, Corollaire 3.1.3] that there exists a basis
$\varepsilon =(\varepsilon_{1},\cdots ,\varepsilon_{\mu})$
of $G_{0}$ over $\cit [x,\theta ]$ such that :\\
$1.$ the matrix of the connection $\nabla$ in this basis takes the form
$$-(\tau A_{0}(x)+A_{\infty})\frac{d\tau }{\tau}+\tau C(x)$$
where $C(x)=\sum_{i=1}^{r}C^{(i)}(x)dx_{i}$. 
The matrix $A_{0}(x)$ represents the multiplication by $F$ on $G_{0}/\tau^{-1} G_{0}$ in the basis induced by $\varepsilon$. Its coefficients belong to $\cit [x]$. 
The matrix $C^{(i)}$ represents the multiplication by $-g_{i}$ on $G_{0}/\tau^{-1} G_{0}$. Its coefficients belong also to $\cit [x]$. Last, the matrix
$A_{\infty}$ is constant.\\
$2.$ The restriction of $\varepsilon$ to the zero value of the parameters is equal to $\varepsilon^{o}$, the canonical solution of the Birkhoff problem for $G_{0}^{o}$.\\
The unicity of such a basis is classical (see [Mal] or [Sab, p. 209]).
Now one gets the desired Frobenius type structure $\fit_{o}$ using the results of section 1.2. The construction in [D] shows also that the restriction of the solution  $\varepsilon$ to any value $a$ of the parameter is the canonical solution of the Birkhoff problem for $G_{0}^{a}$. This gives the last assertion.
\end{proof}

\begin{definition} We will say that the Frobenius type structure $\fit_{o}$ constructed in Theorem 3.3.1 is the {\em canonical Frobenius type structure} attached to the subdiagram deformation $F$.
\end{definition}

\noindent In the notation $\fit_{o}$, the index $_{o}$ recalls the initial data (that is, $f$).

\subsection{Comparison of the canonical Frobenius type structures after a change of initial condition}

Let $F$ be a subdiagram deformation of $f$ and 
$(E^{a}, R_{0}^{a}, R_{\infty},g^{a})$
be the canonical Frobenius type structure on a point attached to $F_{a}=F(.,a)$.
 Let us also consider the subdiagram deformatiom of $F_{a}$ defined by
$$(u,x)\mapsto F(u,x+a).$$
By Theorem 3.3.1 there exists a unique Frobenius type structure on $\ait^{r}$ 
$$\fit_{a}=(\ait^{r}, E,\bigtriangledown , R_{0}, R_{\infty},\Phi ,g)$$
where
$$E:=\frac{\Omega^{n}(U)[x]}{d_{u}F(u,x+a)\wedge \Omega^{n-1}(U)[x]}$$
and such that
$$i^{*}_{\{0\}}\fit_{a}=(E^{a}, R_{0}^{a}, R_{\infty},g^{a}).$$

\noindent Let $\rho_{a}$ be the map defined by $\rho_{a}(x)=x+a$.

\begin{proposition} For any $a\in\ait^{r}$ one has $\fit_{a}=\rho_{a}^{*}\fit_{o}.$
\end{proposition}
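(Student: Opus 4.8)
The plan is to reduce the identity $\fit_a=\rho_a^*\fit_o$ to the uniqueness statement in Theorem 3.3.1, exactly as the definition of $\fit_a$ itself invokes it. Both sides are Frobenius type structures on $\ait^r$; since Theorem 3.3.1 characterizes $\fit_a$ as the \emph{unique} Frobenius type structure on $\ait^r$ living on the module $E:=\Omega^n(U)[x]/d_uF(u,x+a)\wedge\Omega^{n-1}(U)[x]$ whose restriction to the origin is the canonical $(E^a,R_0^a,R_\infty,g^a)$ attached to $F_a$, it suffices to check that $\rho_a^*\fit_o$ is \emph{also} a Frobenius type structure on that same module and \emph{also} restricts, at $x=0$, to $(E^a,R_0^a,R_\infty,g^a)$.

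First I would make the underlying modules literally match. The structure $\fit_o$ lives on $E_o:=\Omega^n(U)[x]/d_uF(u,x)\wedge\Omega^{n-1}(U)[x]$, and pulling back by $\rho_a$ (which on functions is $x\mapsto x+a$, an automorphism of $\ait^r$) replaces the coordinate $x$ by $x+a$; thus $\rho_a^*E_o$ is canonically $\Omega^n(U)[x]/d_uF(u,x+a)\wedge\Omega^{n-1}(U)[x]=E$. So the two sides are already modules over the same ring, with the same underlying $\cit[x]$-module. Second, I would invoke section 1.3: the pull-back of a Frobenius type structure by a morphism of manifolds (here the isomorphism $\rho_a:\ait^r\to\ait^r$, or its analytization) is again a Frobenius type structure; this is formal from the description of such structures via trivial bundles with connection on $\ppit^1\times M$ in section 1.2, and it holds verbatim in the algebraic setting of Remark 2.1.2 2). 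Hence $\rho_a^*\fit_o$ is a legitimate Frobenius type structure on $\ait^r$.

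The one substantive point — and I expect it to be the only real obstacle — is the restriction at $x=0$. One has $i_{\{0\}}^*\,\rho_a^*\fit_o=(\rho_a\circ i_{\{0\}})^*\fit_o=i_{\{a\}}^*\fit_o$, since $\rho_a$ sends the origin to $a$. By the last assertion of Theorem 3.3.1, $i_{\{a\}}^*\fit_o=(E^a,R_0^a,R_\infty,g^a)$, the canonical Frobenius type structure on a point attached to $F_a=F(\,\cdot\,,a)$. That is exactly the required restriction. Here the key input from Theorem 3.3.1 is that the canonical basis $\varepsilon$ of $G_0$ restricts at \emph{every} value $a$ of the parameter to the canonical solution $\varepsilon^a$ of the Birkhoff problem for $G_0^a$; I would make sure to cite this precisely, because it is what guarantees that $i_{\{a\}}^*\fit_o$ is the \emph{canonical} point-structure of $F_a$ (with $R_\infty$ semisimple, Saito's canonical basis, etc.) and not merely some Frobenius type structure isomorphic to it.

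Having verified both hypotheses of the uniqueness clause of Theorem 3.3.1 — that $\rho_a^*\fit_o$ is a Frobenius type structure on $\ait^r$ supported on $E$, and that its fibre at $0$ is $(E^a,R_0^a,R_\infty,g^a)$ — I conclude $\rho_a^*\fit_o=\fit_a$, which is the assertion. I would keep the write-up short: one line identifying the modules, one line for functoriality of pull-back (section 1.3), one line computing $i_{\{0\}}^*\rho_a^*=i_{\{a\}}^*$ and applying the last sentence of Theorem 3.3.1, and a closing appeal to uniqueness.
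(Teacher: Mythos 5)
Your proposal is correct and follows essentially the same route as the paper, whose proof is the one-line appeal ``Follows from the unicity given by Theorem 3.3.1'': you simply spell out the details implicit there (identification of the underlying modules under $x\mapsto x+a$, stability of Frobenius type structures under pull-back, and the computation $i_{\{0\}}^{*}\rho_{a}^{*}\fit_{o}=i_{\{a\}}^{*}\fit_{o}=(E^{a},R_{0}^{a},R_{\infty},g^{a})$ via the last assertion of Theorem 3.3.1) before invoking uniqueness.
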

\begin{proof} Follows from the unicity given by Theorem 3.3.1.
\end{proof}

\noindent This result says that the matrices attached by Lemma  1.1.2 to the Frobenius type structures involved are related by a translation: if
$B_{0}$ et $C^{(i)}$ ({\em resp.} $B_{0}'$ et $C^{(i)'}$) are the ones attached to $\fit_{o}$
({\em resp.} $\fit_{a}$) one has
$$B_{0}'(x)=B_{0}(x+a)$$
and
$$C^{(i)'}(x)=C^{(i)}(x+a).$$

\subsection{Comparison of the canonical Frobenius type structures attached to two different subdiagram deformations}  

We now compare the canonical Frobenius type structures attached to two different subdiagram deformations. 

\begin{proposition}
$1)$ Let $F^{max}$ and $G^{max}$ be two subdiagram maximal deformations of $f$, $\fit_{o}^{max}$ and $\git_{o}^{max}$ the canonical Frobenius type structures attached to $F^{max}$ and $G^{max}$ by Theorem 3.3.1. Then  $\fit_{o}^{max}$ and $\git_{o}^{max}$ are isomorphic.\\
$2)$ Let $\fit_{o}$ be the canonical Frobenius type structure attached to an injective subdiagram deformation $F$,  
 $\git_{o}^{max}$ the canonical Frobenius type structure attached to  a maximal subdiagram deformation $G^{max}$. Then $\fit_{o}$ is induced by $\git_{o}^{max}$ : there exists a map
$\Psi :\ait^{r}\rightarrow\ait^{\nu}$
such that $\fit_{o}=\Psi^{*}\git_{o}^{max}.$
\end{proposition}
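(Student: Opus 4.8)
The plan is to reduce both parts to the uniqueness statement of Theorem 3.3.1, exactly as Proposition 3.4.1 was proved, but now working over a base of possibly different dimension. The key observation is that the canonical Frobenius type structure $\fit_o$ depends only on the Brieskorn lattice $G_0$ of the deformation together with M. Saito's canonical solution $\varepsilon^o$ of the Birkhoff problem at the origin; and $\varepsilon^o$ depends only on $f$, not on the chosen subdiagram monomials $g_i$. So the content of the proposition is that different choices of subdiagram directions assemble into compatible pieces of one ambient structure.

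For part $1)$, let $F^{max}(u,x)=f(u)+\sum_{i=1}^\nu x_i g_i(u)$ and $G^{max}(u,y)=f(u)+\sum_{i=1}^\nu y_i h_i(u)$ be two maximal subdiagram deformations, so $(g_1,\dots,g_\nu)$ and $(h_1,\dots,h_\nu)$ are both $\cit$-bases of ${\cal N}_{<1}K$ (modulo $df\wedge\Omega^{n-1}$, using that $\nu=\dim_\cit{\cal N}_{<1}K$ and the injectivity/maximality hypothesis). First I would write the change of basis $h_j=\sum_i P_{ij}g_i$ with $P\in GL_\nu(\cit)$, and let $\Psi:\ait^\nu\to\ait^\nu$ be the induced linear isomorphism of parameter spaces (so that $G^{max}(u,y)=F^{max}(u,\Psi(y))$ as functions of $u$, for an appropriate linear $\Psi$ built from $P$). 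Then $\Psi^*\fit_o^{max}$ is a Frobenius type structure on $\ait^\nu$ whose Brieskorn lattice is canonically identified with $G_0$ of $G^{max}$ (the defining relation $\theta d_u - d_u F^{max}\wedge$ is pulled back to $\theta d_u - d_u G^{max}\wedge$), and whose restriction to $y=0$ is the canonical Frobenius type structure attached to $f$, namely $(E^o,R_0^o,R_\infty,g^o)$, because $\Psi(0)=0$. By the uniqueness in Theorem 3.3.1 applied to $G^{max}$, we get $\git_o^{max}=\Psi^*\fit_o^{max}$, i.e. the two are isomorphic.

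For part $2)$, let $F(u,x)=f(u)+\sum_{i=1}^r x_i g_i(u)$ be injective subdiagram; since $r\le\nu$ and the $g_i$ are $\cit$-linearly independent, I can complete $(g_1,\dots,g_r)$ to a $\cit$-basis $(g_1,\dots,g_\nu)$ of ${\cal N}_{<1}K$ and let $G^{max}(u,z)=f(u)+\sum_{i=1}^\nu z_i g_i(u)$ be the corresponding maximal deformation. Let $\Psi:\ait^r\to\ait^\nu$ be the linear closed immersion $(x_1,\dots,x_r)\mapsto(x_1,\dots,x_r,0,\dots,0)$, so that $G^{max}(u,\Psi(x))=F(u,x)$. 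Then $\Psi^*\git_o^{max}$ is a Frobenius type structure on $\ait^r$ with Brieskorn lattice identified with $G_0$ of $F$ and with restriction $(E^o,R_0^o,R_\infty,g^o)$ at $x=0$; uniqueness in Theorem 3.3.1 for $F$ gives $\fit_o=\Psi^*\git_o^{max}$.

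The step I expect to be the main obstacle — or at least the one needing care rather than being purely formal — is the identification of the pulled-back Brieskorn lattice with the Brieskorn lattice of the new deformation, and the verification that the pull-back of M. Saito's canonical solution $\varepsilon^o$ (which is intrinsic to $f$) is again the canonical solution, so that Theorem 3.3.1's uniqueness genuinely applies. Concretely one must check that under the base-change $\Psi$ the matrix of $\nabla$ in the pulled-back basis has exactly the normal form $-(\tau A_0(x)+A_\infty)\frac{d\tau}{\tau}+\tau C(x)$ demanded in the proof of Theorem 3.3.1, with $C^{(i)}(x)$ the multiplication by $-g_i$; this is where the linearity of $\Psi$ and the subdiagram hypothesis (guaranteeing, via Theorem 3.2.1, that all the relevant lattices are free of rank $\mu(f)$ over $\cit[x,\theta]$) are used. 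Once that is in place, everything follows from uniqueness.
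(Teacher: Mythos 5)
Your proposal is correct and takes essentially the same route as the paper: write the second family of subdiagram monomials as a linear change of basis of ${\cal N}_{<1}K$ applied to the first, pull back along the induced linear map of parameter spaces, and conclude by the uniqueness in Theorem 3.3.1 (exactly the mechanism of Proposition 3.4.1). The only cosmetic difference is in part 2), where the paper completes $F$ to a maximal deformation and deduces the claim from part 1); since the proposition allows an arbitrary $G^{max}$, your closed immersion $\ait^{r}\hookrightarrow\ait^{\nu}$ should be composed with the part-1 isomorphism, which is implicit in what you wrote.
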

\begin{proof} Write
$$F^{max}(u,x)=f(u)+\sum_{i=1}^{\nu}x_{i}g_{i}$$
and
$$G^{max}(u,x)=f(u)+\sum_{i=1}^{\nu}x_{i}g_{i}'.$$
Since $F^{max}$ and $G^{max}$ are maximal, $(g_{i})$ and $(g_{i}')$ are two basis of ${\cal N}_{<1}K$. In particular, there exists independent linear forms $L_{1},\cdots ,L_{\nu}$ such that
$$G^{max}(u,x)=f(u)+\sum_{i=1}^{\nu}L_{i}(x_{1},\cdots ,x_{\nu})g_{i}.$$ 
Define the map $\Phi$
by
$$\Phi (x_{1},\cdots ,x_{\nu})=(L_{1}(x_{1},\cdots ,x_{\nu}),\cdots ,L_{\nu}(x_{1},\cdots ,x_{\nu})).$$ 
Then $\git_{o}^{max}=\Phi^{*}\fit_{o}^{max}.$
This shows $1)$. $2)$ Follows from $1)$.
\end{proof}

\subsection{Good subdiagram deformations}

We define in this section a class of distinguished subdiagram deformations: these are the {\em good} subdiagram deformations. We will use these deformations in order to construct global deformations of the canonical Frobenius type structures along the subdiagram polynomials (see section 5.3).
If  $F$ is a subdiagram deformation of $f$, let, as in the proof of Theorem 3.3.1,
$\varepsilon =(\varepsilon_{1},\cdots ,\varepsilon_{\mu})$
be the canonical solution of the Birkhoff problem for the Brieskorn lattice $G_{0}$
of $F$. We order $\varepsilon$ such that
$$\alpha_{1}\leq\cdots \leq \alpha_{\mu},$$
the rational numbers $\alpha_{i}$ satisfying $R_{\infty}(\varepsilon_{i})=\alpha_{i}\varepsilon_{i}$.
Let $\fit_{o}$ be the canonical Frobenius type structure attached to $F$: we have a map
$$\Phi :E\rightarrow \Omega^{1}(\ait^{r})\otimes E.$$
Write $\Phi =\sum_{i}\Phi^{(i)}dx_{i}$. By definition, the $\Phi^{(i)}$'s are endomorphisms of $E$.

\begin{definition}
We will say that a subdiagram deformation $F$ is {\em good} if $F$ is injective and if
$$-\Phi^{(i)}(\varepsilon_{1} )=\varepsilon_{i}+\sum_{j<i}a^{j}_{i}(x)\varepsilon_{j}$$
for all $i$ ($a^{j}_{i}\in\cit [x]$). 
\end{definition}

\begin{proposition} There exists good ({\em resp.} good and maximal) subdiagram deformations. 
\end{proposition}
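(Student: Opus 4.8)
The plan is to exhibit a good subdiagram deformation explicitly by choosing the subdiagram monomials $g_i$ so that the period-map condition in Definition 3.6.1 is met by construction. First I would recall the structure of the canonical Frobenius type structure: the basis $\varepsilon=(\varepsilon_1,\dots,\varepsilon_\mu)$ is the canonical (Saito) solution of the Birkhoff problem, $\varepsilon_1=[du/u]$, $R_\infty(\varepsilon_i)=\alpha_i\varepsilon_i$ with $0=\alpha_1<\alpha_2\le\cdots\le\alpha_\mu$, and, by the last assertion of Theorem 3.2.1(3) together with the construction in Theorem 3.3.1, the restriction $i^*_{\{0\}}$ of this data is the canonical structure of $f$ itself. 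At the origin, $-\Phi^{(i)}(\varepsilon_1)=C^{(i)}(0)_{\bullet 1}$ in the notation of Lemma 1.1.2, and $C^{(i)}$ represents the multiplication by $-g_i$ on $E^o$. So at $x=0$ the good-deformation condition reads: the classes $[g_i\,du/u]$ in $E^o=\Omega^n(U)/df\wedge\Omega^{n-1}(U)$ should, together with $\varepsilon_1^o$, form an upper-triangular-unipotent system with respect to $\varepsilon^o$, i.e. $[g_i\,du/u]=\varepsilon_i^o+\sum_{j<i}c_i^j\varepsilon_j^o$.

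The key step is therefore to choose the $g_i$ so that, modulo lower-index terms, $g_i$ represents $\varepsilon_i^o$ in $E^o$. Here one uses that $\varepsilon^o$ is homogeneous for the Newton filtration: $\varepsilon_i^o$ lies in (the image of) ${\cal N}_{\alpha_i}\Omega^n(U)$ and is not in ${\cal N}_{<\alpha_i}$, and since $\alpha_i<n$ for all $i<\mu$ and $\alpha_\mu=n$, all the $\varepsilon_i^o$ with $i\ge 2$ have $\alpha_i$ strictly between $0$ and $n$; in particular the corresponding filtration levels are ``interior''. Concretely, by the spectrum formula in section 3.2, for each $i$ one can pick a Laurent polynomial $g_i$ with $g_i\,du/u\in{\cal N}_{\alpha_i}\Omega^n(U)$ whose class in the graded piece ${\cal N}_{\alpha_i}/({\rm image\ of\ }df\wedge +{\cal N}_{<\alpha_i})$ equals the class of $\varepsilon_i^o$; this forces $g_i$ to be subdiagram, i.e. $g_i\in{\cal N}_{<1}K$ only when $\alpha_i<1$ — but for $\alpha_i\ge 1$ one must instead arrange $g_i\in{\cal N}_{<1}K$ and $g_i\,du/u$ to produce $\varepsilon_i^o$ modulo ${\cal N}_{<\alpha_i}$ after multiplying by a power of the Newton-homogeneous function; in fact the cleanest route is to note $\varepsilon^o_i=[h_i\,du/u]$ for a Laurent polynomial $h_i$ and to write $h_i$ itself, up to lower terms, as an element of ${\cal N}_{<1}K$ using that ${\cal N}_{<1}K$ surjects onto $E^o$ (this is exactly the surjectivity/lattice discussion in the introduction, available when we only claim \emph{existence}). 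Ordering the chosen $g_1=1,g_2,\dots$ by increasing $\alpha_i$ then yields the required unipotent triangular shape at $x=0$, and the $\cit$-linear independence of the $g_i$ follows since their classes are linearly independent in $E^o$ — so the deformation is injective, hence good at the origin.

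To promote ``good at $x=0$'' to ``good on $\ait^r$'', I would invoke the rigidity of the canonical basis: $-\Phi^{(i)}(\varepsilon_1)$ is $C^{(i)}(x)_{\bullet 1}$, a vector with entries in $\cit[x]$ by Theorem 3.3.1(1), and the relation $C^{(i)}+\partial B_0/\partial x_i=[B_\infty,C^{(i)}]$ from Lemma 1.1.2, combined with $B_\infty$ diagonal with entries $\alpha_j$, shows that the $(j,1)$-entry of $C^{(i)}$ has ``weight'' $\alpha_j-\alpha_1=\alpha_j$ — i.e. $C^{(i)}$ is compatible with the Newton grading — so that the entries with $j>i$ vanish identically on all of $\ait^r$ once they vanish at $0$, because the grading leaves no room; this is the same triangularity argument used throughout [D] and [DoSa2]. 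The maximal case is handled by taking $r=\nu$ and choosing $(g_1,\dots,g_\nu)$ to be a full basis of ${\cal N}_{<1}K$ adapted as above (which is possible since $\nu=\dim_\cit{\cal N}_{<1}K\ge$ the number of spectral values $<1$, with the remaining basis vectors of ${\cal N}_{<1}K$ filling in freely), and then re-ordering so that the first $\mu$-relevant ones give the unipotent shape; the same rigidity argument applies verbatim.

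\medskip
\noindent\textbf{Main obstacle.} The delicate point is the interplay between ``subdiagram'' (meaning $g_i\in{\cal N}_{<1}K$) and the spectral degrees $\alpha_i$, which can be as large as $n$: one cannot directly realize $\varepsilon_i^o$ by a subdiagram $g_i$ living in filtration level $\alpha_i$ when $\alpha_i\ge 1$. The resolution is that $E^o$ is spanned by classes of \emph{subdiagram} forms (by the division theorem of Kouchnirenko, cf. Theorem 3.2.1 and [DoSa, Lemma 4.6]), so one should choose the $g_i\in{\cal N}_{<1}K$ whose classes, re-sorted by the $\alpha$-ordering of their \emph{images} $R_\infty(\varepsilon_i^o)$, are triangular-unipotent with respect to $\varepsilon^o$ — and then verify this is compatible with the $\cit[x]$-rigidity so it persists over $\ait^r$. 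Making this sorting-and-triangularization step precise, and checking it does not conflict with the required linear independence and with the normalization $g_1$ such that $-\Phi^{(1)}(\varepsilon_1)=\varepsilon_1$, is where the real work lies.
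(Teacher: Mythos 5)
Your overall strategy (pick the $g_i$ so that $[g_i\,du/u]$ realizes $\varepsilon_i^o$) is the paper's, but the execution has two genuine problems. First, the ``main obstacle'' you identify does not exist, and your way around it is false. The goodness condition $-\Phi^{(i)}(\varepsilon_1)=\varepsilon_i+\sum_{j<i}a_i^j(x)\varepsilon_j$ only concerns $i=1,\dots,r$ with $r\le\nu$, since $\Phi=\sum_{i=1}^r\Phi^{(i)}dx_i$; the spectral numbers $\alpha_i\ge 1$ never enter. The crux of the paper's proof is precisely to show that these indices are harmless: using that $\varepsilon^o$ is M.~Saito's canonical solution, one has ${\cal N}_{\alpha}(G_0^o\cap G_\infty^o)/{\cal N}_{<\alpha}=gr_\alpha^{\cal N}E^o$, and by [DoSa, Lemma 4.6] $gr_\alpha^{\cal N}E^o=gr_\alpha^{\cal N}\Omega^n(U)$ for $\alpha<1$, whence ${\cal N}_\alpha\Omega^n(U)={\cal N}_\alpha(G_0^o\cap G_\infty^o)$ for all $\alpha<1$. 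This shows both that $\alpha_i<1$ for $i\le\nu$ and that each such $\varepsilon_i^o$ is the class of a \emph{unique subdiagram} form $g_i\,du/u$ --- exactly what you need, with no re-sorting or triangularization. By contrast, your proposed fix rests on the claim that $E^o$ is spanned by classes of subdiagram forms; this is wrong in general (it would make every maximal subdiagram deformation surjective, rendering the lattice hypothesis of Theorem 1 vacuous), and Kouchnirenko's division theorem gives the opposite statement: injectivity of ${\cal N}_{<1}$ into $E^o$ (no Jacobian relations in degree $<1$), not surjectivity onto $E^o$.

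Second, your argument for passing from the fiber at $0$ to all of $\ait^r$ does not go through as stated. Taking the $(j,1)$ entry of $C^{(i)}+\partial B_0/\partial x_i=[B_\infty,C^{(i)}]$ with $\alpha_1=0$ gives $\partial (B_0)_{j1}/\partial x_i=(\alpha_j-1)C^{(i)}_{j1}$, which forces no vanishing of $C^{(i)}_{j1}$ for $j>i$. What the paper uses instead is the constancy of the principal parts (with respect to the Newton/$V$-filtration) under a \emph{subdiagram} deformation, taken from [D]: since $-\Phi^{(i)}(\varepsilon_1)$ has constant principal part $\varepsilon_i$, the correction terms can only involve $\varepsilon_j$ of strictly smaller filtration level, i.e.\ $j<i$, which is precisely the triangular shape required over all of $\ait^r$. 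So the two load-bearing steps of the proof --- identifying ${\cal N}_{<1}\Omega^n(U)$ with the span of $\varepsilon_1^o,\dots,\varepsilon_\nu^o$, and the principal-part constancy --- are missing or replaced by incorrect claims in your proposal.
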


\noindent We will denote a good ({\em resp.} a good and maximal) subdiagram defomation by $F^{good}$ ({\em resp.} $F^{good,max}$).

\begin{proof} It is enough to work on the fiber above $0$: indeed, if $-\Phi^{(i)}(\varepsilon_{1}^{o})=\varepsilon_{i}^{o}$
for all $i$ one gets
$$-\Phi^{(i)}(\varepsilon_{1})=\varepsilon_{i}+\sum_{j<i}a^{j}_{i}(x)\varepsilon_{j}$$
because, the deformation being subdiagram, the principal parts are constant (see [D]).
Define, if 
$R_{\infty}(\varepsilon^{o}_{i})=\alpha_{i}\varepsilon^{o}_{i}$, 
$${\cal N}_{\alpha}(G_{0}^{o}\cap G_{\infty}^{o}):=\sum_{\alpha_{i}<\alpha }\cit \varepsilon^{o}_{i}.$$
By construction, one has (see [DoSa, appendix B] or [D1, paragraphe 6])
$$\frac{{\cal N}_{\alpha}(G_{0}^{o}\cap G_{\infty}^{o})}{{\cal N}_{<\alpha}(G_{0}^{o}\cap G_{\infty}^{o})}=
gr_{\alpha}^{{\cal N}}E^{o}$$
where $E^{o}=\Omega^{n}(U)/df\wedge\Omega^{n-1}(U)$ and ${\cal N}_{\bullet}$ is the Newton filtration induced on $E^{o}$.
If $\alpha <1$, it follows from [DoSa, Lemma 4.6] that
$$gr_{\alpha}^{{\cal N}}E^{o}=gr_{\alpha}^{{\cal N}}\Omega^{n}(U).$$
Since ${\cal N}_{<0}\Omega^{n}(U)={\cal N}_{<0}(G_{0}^{o}\cap G_{\infty}^{o})=0$, one deduces that
$${\cal N}_{\alpha}\Omega^{n}(U)={\cal N}_{\alpha}(G_{0}^{o}\cap G_{\infty}^{o})$$
for all $\alpha <1$. This shows two things : first that, if $R_{\infty}(\varepsilon^{o}_{i})=\alpha_{i}\varepsilon^{o}_{i}$,
one has $\alpha_{i}<1$ for all $i\in \{1,\cdots ,\nu\}$ and second, that, given $\varepsilon^{o}_{i}$
such that $\alpha_{i}<1$, there exists a unique subdiagram Laurent polynomial $g_{i}$ such that
$$[g_{i}\frac{du}{u}]=\varepsilon_{i}^{o}.$$
To simplify, put $\varepsilon_{i}^{o}=g_{i}$. Then, for $r\leq\nu$, 
$$F^{good}(u,x)=f(u)+\sum_{i=1}^{r}x_{i}\varepsilon^{o}_{i}$$
is clearly injective and is a good subdiagram deformation.
The subdiagram deformation
$$F^{good,max}(u,x)=f(u)+\sum_{i=1}^{\nu}x_{i}\varepsilon^{o}_{i}$$
is good and maximal.
\end{proof}

\noindent Let $\fit_{o}^{good}$ ({\em resp.} $\fit_{o}^{good,max}$) be the Frobenius type structure attached to a good
({\em resp.} to a good and maximal) subdiagram deformation $F^{good}$ ({\em resp.} $F^{good,max}$).

\begin{lemma} 
$1)$ Assume that $(g_{1},\cdots ,g_{r})$ is a lattice in $A_{f}$. Then $F^{good,max}$ is surjective.\\
$2)$ $\fit_{o}^{good,max}$ is isomorphic to any canonical Frobenius type structure attached to a maximal subdiagram deformation and it induces any canonical Frobenius type structure attached to an injective subdiagram deformation.
\end{lemma}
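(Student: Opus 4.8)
The plan is to deduce Lemma 3.6.5 from the two comparison results already available: Proposition 3.5.1 (two maximal subdiagram deformations give isomorphic canonical Frobenius type structures, and any injective one is induced from a maximal one) and Lemma 3.6.5's own hypothesis together with Remark 3.2.2. For part $1)$, I would argue as follows. Suppose $(g_{1},\dots ,g_{r})$ is a lattice in $A_{f}$, i.e.\ the (arbitrary) subdiagram deformation from which we started is surjective. By Remark 3.2.2, surjectivity of \emph{one} maximal subdiagram deformation is equivalent to surjectivity of \emph{every} maximal one; so it suffices to produce a single surjective maximal subdiagram deformation. But a surjective subdiagram deformation has $r\ge\mu-\nu+\dots$; more to the point, if $(g_1,\dots,g_r)$ is a lattice then the classes of $g_1,\dots,g_r$ generate $A_f$ as a $\cit$-vector space, and since each $g_i\in{\cal N}_{<1}K$, the subdiagram part ${\cal N}_{<1}K$ already surjects onto $A_f$. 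Now the $\varepsilon_i^{o}=g_i$ ($i=1,\dots,\nu$) appearing in the construction of $F^{good,max}$ in Proposition 3.6.2 form a basis of ${\cal N}_{<1}K$ by that proof, hence their classes span the image of ${\cal N}_{<1}K$ in $A_f$, which is all of $A_f$. Therefore $(\varepsilon_1^o,\dots,\varepsilon_\nu^o)$ is a lattice in $A_f$, i.e.\ $F^{good,max}$ is surjective. (Equivalently: invoke Remark 3.2.2 directly, since $F^{good,max}$ is maximal and some maximal deformation — any maximal extension of the given lattice — is surjective.)

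For part $2)$, I would simply note that $F^{good,max}$ is by construction (Proposition 3.6.2) a maximal subdiagram deformation, so Proposition 3.5.1 part $1)$ applied to the pair $(F^{good,max},G^{max})$ gives that $\fit_{o}^{good,max}$ is isomorphic to $\git_{o}^{max}$ for any maximal subdiagram deformation $G^{max}$. Likewise, Proposition 3.5.1 part $2)$, applied with $G^{max}$ replaced by $F^{good,max}$, shows that for any injective subdiagram deformation $F$ the canonical Frobenius type structure $\fit_{o}$ is induced by $\fit_{o}^{good,max}$, i.e.\ there is a map $\Psi:\ait^{r}\to\ait^{\nu}$ with $\fit_{o}=\Psi^{*}\fit_{o}^{good,max}$. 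No new computation is required: part $2)$ is a pure corollary of Proposition 3.5.1 together with the fact (Proposition 3.6.2) that good maximal subdiagram deformations exist and are in particular maximal.

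The only step with any content is the surjectivity claim in $1)$, and even there the work has essentially been done: the proof of Proposition 3.6.2 shows that the $\varepsilon_i^{o}$ ($1\le i\le\nu$) form a basis of ${\cal N}_{<1}K$, and one must check that the hypothesis ``$(g_1,\dots,g_r)$ is a lattice in $A_f$'' forces ${\cal N}_{<1}K$ to surject onto $A_f$ — which is immediate since each $g_i$ lies in ${\cal N}_{<1}K$. The mild subtlety is that the lattice $(g_1,\dots,g_r)$ in the hypothesis need not be the good maximal one, so one passes between the two via Remark 3.2.2 (all maximal subdiagram deformations are simultaneously surjective or not) or via the ${\cal N}_{<1}K$ argument just sketched. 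I would present the ${\cal N}_{<1}K$ argument as the cleaner route, with Remark 3.2.2 as an alternative one-line justification.

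In summary, the proof is: (i) observe that the hypothesis in $1)$ means some subdiagram deformation is surjective, hence ${\cal N}_{<1}K\twoheadrightarrow A_f$, hence the basis $(\varepsilon_1^o,\dots,\varepsilon_\nu^o)$ of ${\cal N}_{<1}K$ produced in Proposition 3.6.2 is a lattice in $A_f$, i.e.\ $F^{good,max}$ is surjective — alternatively cite Remark 3.2.2; (ii) for $2)$, quote Proposition 3.5.1 parts $1)$ and $2)$, using that $F^{good,max}$ is maximal. I do not anticipate a genuine obstacle; the statement is a repackaging of earlier results.
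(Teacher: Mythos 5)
Your part $2)$ and your fallback argument for part $1)$ coincide with the paper's own proof, which is exactly ``$1)$ follows from Remark 3.1.2 (your ``Remark 3.2.2'' is a numbering slip) and $2)$ follows from Proposition 3.5.1 because $F^{good,max}$ is maximal''. So the overall route is right, and part $2)$ needs no further comment.

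However, the argument you present as the ``cleaner route'' for $1)$ contains a genuine error: from ``$(g_{1},\cdots ,g_{r})$ is a lattice'' you infer that the classes of the $g_{i}$ generate $A_{f}$ as a $\cit$-vector space, hence that ${\cal N}_{<1}K$ surjects onto $A_{f}$. A lattice in the sense of Definition 3.1.1 only means that every element of $A_{f}$ is the class of a \emph{polynomial} in $g_{1},\cdots ,g_{r}$, i.e.\ the $g_{i}$ generate $A_{f}$ as a $\cit$-algebra; such polynomials involve products $g_{i}g_{j}$, $g_{i}^{2}$, etc., which in general leave ${\cal N}_{<1}K$, so the linear image of ${\cal N}_{<1}K$ in $A_{f}$ need not be all of $A_{f}$ and your intermediate claim is unjustified. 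The correct short argument in the same spirit is: since the $\varepsilon_{i}^{o}$ ($1\le i\le\nu$) form a basis of ${\cal N}_{<1}K$ and each $g_{j}$ lies in ${\cal N}_{<1}K$, each $g_{j}$ is a $\cit$-linear combination of the $\varepsilon_{i}^{o}$, so any polynomial in the $g_{j}$ is a polynomial in the $\varepsilon_{i}^{o}$; hence $(\varepsilon_{1}^{o},\cdots ,\varepsilon_{\nu}^{o})$ is a lattice and $F^{good,max}$ is surjective. (This is also what makes Remark 3.1.2, and your alternative one-line justification via it after extending the given family to a maximal deformation, work.) With that step repaired, your proof is the paper's.
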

\begin{proof} 
$1)$ Follows from Remark 3.1.2 and $2)$ follows from Proposition 3.5.1 because $F^{good,max}$ is a maximal subdiagram deformation. 
\end{proof}

\section{Pre-primitive forms of a canonical Frobenius type structure}

Let $f$ be a convenient and nondegenerate Laurent polynomial,  
$$F(u,x)=f(u)+\sum_{i=1}^{r}x_{i}g_{i}$$ 
be a subdiagram deformation of $f$ and 
$$\fit_{o}=(\ait^{r}, E, \bigtriangledown , R_{0}, R_{\infty},\Phi ,g)$$
the canonical Frobenius type structure on  $\ait^{r}$ attached to $F$.

\subsection{The form $\omega$}
Let
$\varepsilon =(\varepsilon_{1},\cdots ,\varepsilon_{\mu})$ be the (ordered) solution of the Birkhoff problem for $G_{0}$ considered in section 3.6.

\begin{proposition} One has
$$\varepsilon_{1}=[\frac{du}{u}]$$
where $[\ ]$ denotes the class in $G_{0}$. In particular, the class of
$\frac{du}{u}$ in $E$ is $\bigtriangledown$-flat
and homogeneous, {\em i.e} an eigenvector of $R_{\infty}$.
 \end{proposition}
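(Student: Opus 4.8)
The plan is to trace the identity $\varepsilon_1 = [du/u]$ back through the construction of the basis $\varepsilon$ and to invoke the corresponding normalization already recorded for the fiber over $0$. Recall that by Theorem 3.3.1 the basis $\varepsilon = (\varepsilon_1,\dots,\varepsilon_\mu)$ of $G_0$ over $\cit[x,\theta]$ restricts, at the zero value of the parameters, to the canonical solution $\varepsilon^o = (\varepsilon^o_1,\dots,\varepsilon^o_\mu)$ of the Birkhoff problem for $G_0^o$. In section 3.3 it is stated that, because $f$ is convenient and nondegenerate, $\varepsilon^o_1 = [du/u]$ (the class in $G_0^o$), with eigenvalue $\alpha_1 = 0$ of multiplicity one. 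So the statement over $0$ is available for free; the content of the proposition is the upgrade to $\ait^r$.

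The key step is the observation, already used in the proof of Proposition 3.6.2, that for a subdiagram deformation the principal parts of the relevant objects are constant in $x$ (see [D]). Concretely, the element $[du/u]$ in $G_0$ is $\bigtriangledown$-flat — indeed $\nabla_{\partial_{x_j}}(du/u) = \partial_{x_j}(du/u) - \tfrac{\partial F}{\partial x_j}\,\theta^{-1}(du/u) = -g_j\,\theta^{-1}(du/u)$, and $g_j$ is subdiagram, so modulo $\tau^{-1}G_0$ (i.e. in $E$) the class of $du/u$ has vanishing $\partial_{x_j}$-derivative; more precisely one checks $\bigtriangledown(\,[du/u]\,) = 0$ in $E$ directly from the formula for $\nabla$ together with the fact that $\bigtriangledown$ is the residue-type part of $\nabla$ at $\tau = 0$. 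Since $\varepsilon_1$ is by construction the unique $\bigtriangledown$-flat lift of $\varepsilon^o_1 = [du/u]$ with the prescribed homogeneity weight $\alpha_1 = 0$, and since $[du/u]$ is such a lift, uniqueness (the classical uniqueness statement cited from [Mal], [Sab, p. 209] in the proof of Theorem 3.3.1) forces $\varepsilon_1 = [du/u]$ in $G_0$.

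From $\varepsilon_1 = [du/u]$ the remaining assertions are immediate: reducing modulo $\theta G_0$ gives that the class of $du/u$ in $E = G_0/\theta G_0$ is $\bigtriangledown$-flat (it is the first vector of a $\bigtriangledown$-flat basis), and it is homogeneous with $R_\infty([du/u]) = \alpha_1 [du/u] = 0$, because $\varepsilon$ is a homogeneous basis with $R_\infty(\varepsilon_i) = \alpha_i \varepsilon_i$ and $\alpha_1 = 0$.

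The main obstacle is purely expository: one must be careful that the uniqueness invoked applies to the specific normalization (canonical solution of the Birkhoff problem, M. Saito's basis, ordered by weights) and that $[du/u]$ genuinely lies in the correct weight-graded piece so that it is eligible to be $\varepsilon_1$; this is exactly where the hypothesis that $f$ is convenient and nondegenerate enters, via the computation in [DoSa, 4.d] that the weight $0$ occurs with multiplicity one and is realized by $du/u$. Everything else is a direct reading-off from Theorem 3.3.1 and the constancy of principal parts for subdiagram deformations.
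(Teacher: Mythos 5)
There is a genuine gap at the central step, namely your claim that the class of $du/u$ in $E$ is $\bigtriangledown$-flat ``by direct computation''. The formula you display, $\nabla_{\partial_{x_{j}}}(du/u)=-g_{j}\theta^{-1}(du/u)$, only computes the Higgs field: $\nabla_{\partial_{x_{j}}}$ maps $G_{0}$ into $\theta^{-1}G_{0}$, not into $G_{0}$, so it induces no operator on $E=G_{0}/\theta G_{0}$, and $\bigtriangledown$ is \emph{not} ``$\nabla_{\partial_{x_{j}}}$ modulo $\theta G_{0}$'' (nor is $-\tau g_{j}[du/u]$ zero in $E$). The flat structure on $E$ comes from the chosen extension of $G_{0}$ to a trivial bundle on $\ppit^{1}$, i.e.\ from the canonical Birkhoff solution $\varepsilon$: the $\bigtriangledown$-flat sections are exactly the constant-coefficient combinations of the classes of the $\varepsilon_{i}$. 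Hence checking that $[du/u]$ is flat amounts to controlling its coefficients on $\varepsilon$ over $\cit [x]$, which is precisely what the proposition asserts; as written, your argument is circular. Similarly, the uniqueness you quote from [Mal], [Sab, p.\ 209] concerns entire solutions of the Birkhoff problem restricting to $\varepsilon^{o}$, so it can be applied to $du/u$ only after one knows that $du/u$ is part of such a solution; and note that the proposition claims $\varepsilon_{1}=[du/u]$ in $G_{0}$, not merely modulo $\theta G_{0}$, which a uniqueness-of-flat-lift argument cannot deliver by itself. The appeal to ``constancy of principal parts'' from the proof of Proposition 3.6.2 does not help either: it concerns the images of the canonical basis under $\Phi$, not the section $du/u$.

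The paper supplies exactly the missing input through the Malgrange--Kashiwara filtration $V_{\bullet}$ along $\tau=0$, which in the convenient and nondegenerate case coincides (up to shift) with the Newton filtration [D, Prop.\ 2.3.3]: $V_{\alpha_{1}}G_{0}$ is a free $\cit [x]$-module of rank one, every basis of it is part of a solution of the Birkhoff problem [D1, Prop.\ 7.0.2], $\varepsilon_{1}$ is a basis of it and $du/u$ lies in it, so $[du/u]=p(x)\varepsilon_{1}$; finally, since each $F_{a}$ is convenient and nondegenerate, $\varepsilon_{1}^{a}=[du/u]$ in $G_{0}^{a}$ for \emph{all} $a$ by [DoSa, 4.d], whence $p\equiv 1$. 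Your proposal uses only the fiber at $a=0$ (which gives $p(0)=1$) and tries to substitute flatness for the fiberwise identification at every $a$; to repair it along your outline you would have to deduce the flatness and homogeneity of $[du/u]$ from its membership in $V_{\alpha_{1}}G_{0}$, i.e.\ run the $V$-filtration argument above, at which point you have reproduced the paper's proof.
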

\begin{proof} 
Let $V_{\bullet}$ be the Malgrange-Kashiwara filtration along $\tau =0$ of the Gauss-Manin system $G$ of the subdiagram deformation $F$, $V_{\bullet}G_{0}$ its trace on $G_{0}$. In the convenient and nondegenerate case, this filtration is equal to
the Newton filtration ${\cal N}$ (up to a shift) [D, proposition 2.3.3].
It follows from [D, proposition 2.3.1] that $V_{\alpha_{1}}G_{0}$ is a free $\cit [x]$-module and, from [D1, proposition 7.0.2], that every basis of $V_{\alpha_{1}}G_{0}$ is a part of a solution of the Birkhoff problem for $G_{0}$.
Now, $V_{\alpha_{1}}G_{0}$ is of rank $1$ over $\cit [x]$
(for all $a$ the $\cit$-vector space $V_{\alpha_{1}}G_{0}^{a}$
is $1$-dimensional), $\varepsilon_{1}$ is a basis of it and $\frac{du}{u}\in V_{\alpha_{1}}G_{0}$. 
Notice that, because $F_{a}$ is a convenient and non degenerate Laurent polynomial,
$\varepsilon_{1}^{a}$ is equal to the 
class of the form $du/u$ in $G_{0}^{a}$ for all $a$ [DoSa, 4.d]. If 
$[du/u]=p(x)\varepsilon_{1}$ in $G_{0}$, we deduce from this that $p$ is identically equal to $1$.
\end{proof}

\begin{notation} Until the end of this paper, $\omega$ will denote the class of $\frac{du}{u}$
in $E.$
\end{notation}

\subsection{Conditions (IC) and (GC) for $\omega^{o}$}

 Choose an origin, say $0$. We have 
$$ E^{o}=E/(x)E=\Omega^{n}(U)/df\wedge\Omega^{n-1}(U)$$ 
\noindent and $\omega^{o}$ denotes the class of $\frac{du}{u}$
in $E^{o}$. 
Conditions (IC) and (GC) for $\omega^{o}$ are defined in 2.1.7.

\begin{lemma}
$1)$ $\omega^{o}$ satisfies (IC) if and only if the classes of $g_{1},\cdots ,g_{r}$ are linearly independent in $A_{f}$.\\
$2)$ $\omega^{o}$ satisfies (GC) if and only if every element of $A_{f}$ can be written as (the class of) a polynomial in $g_{1},\cdots ,g_{r},f$ with coefficients in $\cit$.
\end{lemma}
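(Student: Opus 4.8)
The plan is to unwind the definitions of the period map and of the Higgs field $\Phi$ in terms of M. Saito's canonical basis $\varepsilon^o$, and then identify everything concretely on $E^o = \Omega^n(U)/df\wedge\Omega^{n-1}(U) \cong A_f \cdot du/u$. Recall from Theorem 3.3.1 that the matrix $C^{(i)}$ of $\Phi^{(i)}$ represents multiplication by $-g_i$ on $G_0/\tau^{-1}G_0$; restricting to the fiber over $0$, the endomorphism $\Phi^{(i),o} = \Phi^{(i)}_{\partial_{x_i}}$ of $E^o$ is exactly multiplication by $-g_i$, and $R_0^o$ is multiplication by $f$ (as stated at the end of section 3.3). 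Under the canonical identification $E^o \cong A_f$ sending $\omega^o = [du/u]$ to the class of $1$, this is the whole content of the computation.

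For part $1)$, the period map at $0$ is $\varphi_\omega^o : \Theta_{\ait^r}^o \to E^o$, $\partial_{x_i} \mapsto -\Phi^{(i),o}(\omega^o) = [g_i \, du/u]$, i.e. (under $E^o \cong A_f$) the map sending $\partial_{x_i}$ to the class of $g_i$ in $A_f$. Since $\Theta_{\ait^r}^o$ is the $r$-dimensional $\cit$-vector space spanned by $\partial_{x_1},\dots,\partial_{x_r}$, injectivity of $\varphi_\omega^o$ is literally equivalent to $\cit$-linear independence of the classes $[g_1],\dots,[g_r]$ in $A_f$. That disposes of $1)$.

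For part $2)$, condition (GC) asks that $\omega^o$ together with all its images under iterated application of $R_0^o$ and the $\Phi^{(i),o}$ generate $E^o$ over $\cit$. Under $E^o\cong A_f$, $R_0^o$ is multiplication by $f$ and $\Phi^{(i),o}$ is (up to sign) multiplication by $g_i$, so the $\cit$-subspace generated by $\omega^o$ and its iterated images is precisely the $\cit$-linear span of the classes of all monomials $f^{a_0}g_1^{a_1}\cdots g_r^{a_r}$, which is the same as the span of the classes of all polynomials in $f, g_1,\dots,g_r$ with $\cit$-coefficients. Hence (GC) holds if and only if every element of $A_f$ is the class of such a polynomial. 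One small point to note is that $[R_0,\Phi]=0$ (equivalently the commutativity $[B_0,C^{(i)}]=0$ and $[C^{(i)},C^{(j)}]=0$ from Lemma 1.1.2) means all these multiplication operators commute, so the order of iteration is irrelevant and the generated subspace really is the span of the monomials described — but this is automatic here since multiplication in the commutative ring $A_f$ is obviously commutative.

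I do not expect any serious obstacle: the proof is a matter of correctly translating the abstract conditions (IC) and (GC) of Definition 2.1.7 through the identifications $E^o = \Omega^n(U)/df\wedge\Omega^{n-1}(U) \cong A_f$, $R_0^o = (f\cdot)$, $\Phi^{(i),o} = (-g_i\cdot)$. The only mildly delicate point is to make sure the description of $\Phi^{(i)}$ as "multiplication by $-g_i$ on $G_0/\tau^{-1}G_0$" from Theorem 3.3.1 restricts correctly to the fiber over $0$ to give multiplication by $-g_i$ on $E^o$; this follows because $E = G_0/\theta G_0$ and $E^o = E/(x)E = G_0^o/\theta G_0^o$ compatibly, and multiplication by $g_i$ on $\Omega^n(U)[x]$ descends to multiplication by $g_i$ on each quotient.
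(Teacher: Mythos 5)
Your proposal is correct and follows the same route as the paper: the paper's (one-line) proof consists precisely of the identifications $R_{0}^{o}(\omega^{o})=[f\,du/u]$ and $-\Phi^{o}_{\partial_{x_i}}(\omega^{o})=[g_{i}\,du/u]$ in $E^{o}\cong A_{f}$, from which (IC) and (GC) translate into the stated conditions exactly as you argue; your write-up merely makes explicit the translation of Definition 2.1.7 and the compatibility of the restriction to the fiber over $0$.
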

\begin{proof} By definition (see section 3.2), one has 
 $R_{0}^{o}(\omega^{o})=[f\frac{du}{u}]$ and
$-\Phi_{\omega}^{o}(\partial_{x_{i}})=[g_{i}\frac{du}{u}]$
where $[\ ]$ denotes the class in $E^{o}$. 
\end{proof}

\noindent The following proposition justifies Definition  3.1.1 3):

\begin{proposition} 
$1)$ If the deformation $F$ is injective then $\omega^{o}$ satisfies (IC).\\
$2)$ If the deformation $F$ is surjective then $\omega^{o}$ satisfies (GC).
\end{proposition}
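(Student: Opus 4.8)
The plan is to reduce both statements to the simple computations recorded in Lemma 4.2.1, which identify $R_0^o(\omega^o)$ with the class of $f\,\frac{du}{u}$ and $-\Phi^o_{\partial_{x_i}}(\omega^o)$ with the class of $g_i\,\frac{du}{u}$ in $E^o=\Omega^n(U)/df\wedge\Omega^{n-1}(U)$. Throughout I use the standard identification $E^o\simeq A_f\,\frac{du}{u}$, valid because $f$ is convenient and nondegenerate (so that $df\wedge\Omega^{n-1}(U)=(\partial f/\partial u_1,\dots,\partial f/\partial u_n)\,\frac{du}{u}$); under this identification $\omega^o$ corresponds to the class of $1$ in $A_f$, $R_0^o$ to multiplication by $f$, and $\Phi^o_{\partial_{x_i}}$ to multiplication by $-g_i$.

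For part 1), I would argue as follows. By Lemma 4.2.1 1), $\omega^o$ satisfies (IC) if and only if the classes of $g_1,\dots,g_r$ are $\cit$-linearly independent in $A_f$. Now unravel what injectivity of $F$ gives: by Definition 3.1.1 3) it means the $g_i$'s are $\cit$-linearly independent as Laurent polynomials, i.e. linearly independent in ${\cal N}_{<1}K$. So the point is to check that a set of subdiagram Laurent polynomials that is linearly independent in ${\cal N}_{<1}K$ stays linearly independent modulo the Jacobian ideal. This is exactly the content of the spectral computation of section 3.2: for $\alpha<1$ the Newton-graded piece $gr_\alpha^{\cal N}E^o$ equals $gr_\alpha^{\cal N}\Omega^n(U)$ (this is [DoSa, Lemma 4.6], already invoked in the proof of Proposition 3.6.1), and since ${\cal N}_{<0}\Omega^n(U)=0$ one gets ${\cal N}_\alpha\Omega^n(U)={\cal N}_\alpha(E^o)$ for all $\alpha<1$, hence the natural map ${\cal N}_{<1}K\,\frac{du}{u}\to E^o$ is injective. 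Therefore the images of $g_1,\dots,g_r$ in $A_f$ are linearly independent, which is (IC).

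For part 2), by Lemma 4.2.1 2), $\omega^o$ satisfies (GC) precisely when every element of $A_f$ is a $\cit$-polynomial in $g_1,\dots,g_r,f$; but by Definition 3.1.1 3) surjectivity of $F$ says more, namely that $(g_1,\dots,g_r)$ is a lattice in $A_f$, i.e. every element of $A_f$ is already a $\cit$-polynomial in the $g_i$ alone. A fortiori it is a $\cit$-polynomial in $g_1,\dots,g_r,f$, so (GC) holds. (One can phrase this intrinsically: $\omega^o$ and its images under iteration of $R_0^o$ and the $\Phi^o_{\partial_{x_i}}$ generate $E^o$, since already the images under the $\Phi^o_{\partial_{x_i}}$ generate.)

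The only genuine point requiring care is part 1): the passage from linear independence of the $g_i$ in ${\cal N}_{<1}K$ to linear independence of their images in $A_f$. Everything else is a direct unwinding of Lemma 4.2.1 together with Definition 3.1.1. The needed input for part 1) is the injectivity of ${\cal N}_{<1}K\,\frac{du}{u}\hookrightarrow E^o$, which follows from the Newton-filtration comparison [DoSa, Lemma 4.6] used in the convenient and nondegenerate setting, exactly as in the proof of Proposition 3.6.1; so I would simply cite that argument rather than reprove it.
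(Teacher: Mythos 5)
Your proof is correct and takes essentially the same route as the paper: both parts reduce to Lemma 4.2.1, part 2) is immediate from the definition of surjectivity, and part 1) rests on the Kouchnirenko division theorem [DoSa, Lemma 4.6], which the paper applies directly (writing $\sum_{j}\alpha_{j}g_{j}=\sum_{i}b_{i}u_{i}\frac{\partial f}{\partial u_{i}}$ and choosing $b_{i}\in {\cal N}_{\alpha -1}K=0$ since $\alpha <1$), while you invoke the equivalent filtration comparison giving the injectivity of ${\cal N}_{<1}K\,\frac{du}{u}\rightarrow E^{o}$, as in the proof of Proposition 3.6.2 (which you cite as 3.6.1 -- a harmless misnumbering).
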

\begin{proof}  
Let us show $1)$ : it follows from Lemma 4.2.1 1) that it is enough to show that the classes of $g_{1},\cdots ,g_{r}$ in $A_{f}$ are linearly independent. But this follows from the conditions $g_{j}\in {\cal N}_{\alpha_{j}}K$ with $\alpha_{j}<1$ for all $j$: indeed, assume that there exist complex numbers $\alpha_{1},\cdots ,\alpha_{r}$ such that 
$$\sum_{j=1}^{r}\alpha_{j}g_{j}=\sum_{i=1}^{n}b_{i}u_{i}\frac{\partial f}{\partial u_{i}}.$$
One can choose, using [DoSa, lemma 4.6], the $b_{i}$'s such that  $b_{i}\in {\cal N}_{\alpha -1}K$ where $\alpha :=max_{j}\alpha_{j}$. We then get $b_{i}=0$ for all $i$ because $\alpha <1$. Moreover, the $g_{j}$' are linearly independant in $K$ (the deformation $F$ is injective): this shows that $\alpha_{i}=0$ for all $i$. $2)$ is clear.
\end{proof}

\begin{example} We will say that the subdiagram deformation $F$ contains the monomial $u_{1}^{a_{1}}\cdots u_{n}^{a_{n}}$ if there exists $j$ such that $g_{j}(u)= u_{1}^{a_{1}}\cdots u_{n}^{a_{n}}$. Assume that the injective deformation $F$ contains 
the monomials $u_{1},\cdots ,u_{n},u_{1}^{-1},\cdots ,u_{n}^{-1}.$
Then $\omega^{o}$ satisfies $(IC)$ and $(GC)$. Notice that, often, the monomials $1/u_{1},\cdots ,1/u_{n}$ are equal, in $A_{f}$, to a (positive) power of the monomials 
$u_{1},\cdots ,u_{n}$: in this case, the condition  ``{\em $F$ contains the monomials $u_{1},\cdots ,u_{n}$}'' is enough to get the condition (GC) for $\omega^{o}$. 
\end{example}

\begin{lemma} Assume that the deformation $F$ is injective. Then $\omega^{a}$ satisfies (IC) for any choice of origin $a$.
\end{lemma}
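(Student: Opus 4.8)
The statement to prove is Lemma 4.2.6: if the subdiagram deformation $F$ is injective, then $\omega^{a}$ satisfies (IC) for any choice of origin $a\in\ait^{r}$. The plan is to reduce this to the zero-parameter case already treated in Proposition 4.2.3 1), using the stability of the constructions under translation established in section 3.4. First I would recall that, by Proposition 3.4.1, for any $a\in\ait^{r}$ one has $\fit_{a}=\rho_{a}^{*}\fit_{o}$, where $\fit_{a}$ is the canonical Frobenius type structure attached to the translated subdiagram deformation $(u,x)\mapsto F(u,x+a)$ of $F_{a}=F(\ .,a)$; taking fibres at $0$ this yields $i^{*}_{\{0\}}\fit_{a}=i^{*}_{\{a\}}\fit_{o}=(E^{a},R_{0}^{a},R_{\infty},g^{a})$, the canonical Frobenius type structure on a point attached to $F_{a}$.

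Next I would observe that the condition (IC) for $\omega^{a}$ is, by Definition 2.1.11 2) and the identification above, exactly the condition (IC) at the origin for the class of $du/u$ in the canonical Frobenius type structure attached to the subdiagram deformation $(u,x)\mapsto F(u,x+a)$ of the convenient and nondegenerate Laurent polynomial $F_{a}$. Indeed $\Phi^{a}_{\partial_{x_i}}(\omega^{a})=-[g_i\,\tfrac{du}{u}]$ modulo the Jacobian ideal of $F_{a}$, by the same computation as in the proof of Lemma 4.2.1. So it suffices to check that the deformation $(u,x)\mapsto F(u,x+a)=F_{a}(u)+\sum_i x_i g_i(u)$ of $F_{a}$ is again an injective subdiagram deformation, and then to invoke Proposition 4.2.3 1).

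The two points to verify are that the $g_i$ remain subdiagram with respect to the Newton filtration attached to $F_{a}$, and that they remain $\cit$-linearly independent. Linear independence is immediate since the $g_i$ are unchanged. For the subdiagram property, the key fact is that $f$ and $F_{a}$ have the same Newton polyhedron — this is exactly the observation used in the proof of Theorem 3.2.1 3), since $F_a$ differs from $f$ only by adding subdiagram terms — hence the associated Newton filtrations ${\cal N}_{\bullet}$ on $K$ coincide, and so $g_i\in{\cal N}_{<1}K$ for $F$ if and only if $g_i\in{\cal N}_{<1}K$ for $F_a$. Therefore $(u,x)\mapsto F(u,x+a)$ is an injective subdiagram deformation of $F_a$, Proposition 4.2.3 1) applies to it, and (IC) holds at its origin, i.e. $\omega^{a}$ satisfies (IC).

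The main (and essentially only) obstacle is bookkeeping: one must be careful that the ``origin-$a$'' version of (IC) for $\fit_{o}$ genuinely coincides with the ``origin-$0$'' version of (IC) for the translated structure $\fit_{a}$, i.e. that pulling back by $\rho_a$ intertwines the period maps $\varphi^{a}_{\omega}$ (for $\fit_o$) and $\varphi^{0}_{\omega}$ (for $\fit_a$) so that injectivity of one is equivalent to injectivity of the other. This is formal once one writes out Definition 2.1.11, using that $\rho_a$ induces the identity on tangent spaces and that $\omega$ is the flat extension of the class of $du/u$ in every fibre (Proposition 4.1.1 and its $F_a$-analogue). No genuine new computation is needed beyond what is already in sections 3.2, 3.4 and 4.2.
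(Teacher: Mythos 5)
Your proof is correct and follows essentially the same route as the paper: reduce (IC) at the origin $a$ to the linear independence of the classes of $g_{1},\dots ,g_{r}$ in $A_{F_{a}}$, and then rerun the argument of Proposition 4.2.2, using that $F_{a}$ is convenient and nondegenerate with the same Newton polyhedron (hence the same Newton filtration) as $f$. The extra bookkeeping via $\rho_{a}$ and Proposition 3.4.1 is just a more explicit version of the identification the paper uses implicitly, so there is nothing substantive to add.
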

\begin{proof} It is enough to show that the classes of $g_{1},\cdots ,g_{r}$ in $A_{F_{a}}$ are linearly independent. But one can repeat the proof of Proposition 4.2.2, because $F_{a}$ is convenient and non degenerate and because the Newton polyhedra (and hence the Newton filtrations) of $f$ and $F_{a}$ are the same.
\end{proof}

\subsection{The canonical pre-primitive form}

Le $\fit_{o}^{an}$ be the analytization of the Frobenius type structure $\fit_{o}$ (see Remark 2.1.8), $\fit_{o,0}^{an}$ its germ at $0$.

\begin{proposition}
$1)$ Assume that the subdiagram deformation $F$ is injective and surjective. Then $\omega^{an}$ is a pre-primitive section of $\fit_{o,0}^{an}$.\\
$2)$ Assume that the subdiagram deformation $F$ is injective. Then $\omega$ satisfies $(IC)^{gl}$. If moreover $F$ contains the monomials 
$u_{1},\cdots ,u_{n},u_{1}^{-1},\cdots ,u_{n}^{-1}$
then $\omega$ is a globally pre-primitive section of the Frobenius type structure $\fit_{o}$ and $\omega^{an}$ is a pre-primitive section of the Frobenius type structure $\fit_{o}^{an}$ for any choice of the origin in $\cit^{r}$.
\end{proposition}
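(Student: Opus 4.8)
The plan is to reduce everything to the pointwise statements already established in Section~4.2 and to the definition of ''globally pre-primitive'' in Definition~2.1.12. For part~$1)$: since $F$ is injective and surjective, Proposition~4.2.2 gives that $\omega^{o}$ satisfies both $(IC)$ and $(GC)$; but $\omega^{an}$ restricted to the germ at $0$ is exactly $\omega^{o}$ modulo the maximal ideal $m^{0}$ (the fiber $E^{an}_{0}/m^{0}E^{an}_{0}$ is canonically $E^{o}$), so by Definition~2.1.7 $\omega^{an}$ is pre-primitive for the germ $\fit_{o,0}^{an}$. This is essentially a translation between the algebraic and analytic languages via Remark~2.1.8.

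For part~$2)$, the two conditions $(IC)^{gl}$ and $(GC)^{gl}$ must be checked for the $\cit[x]$-module $E$. First I would establish $(IC)^{gl}$: the period map $\varphi_{\omega}:\ait^{r}(\cit)\to E$ sends $\partial_{x_i}\mapsto -\Phi_{\partial_{x_i}}(\omega)$, whose class in $E^{a}$ is $[g_{i}\,du/u]$ by the computation in the proof of Lemma~4.2.1; since Lemma~4.2.4 says these classes are linearly independent in $A_{F_{a}}=E^{a}$ for \emph{every} $a$, and $E$ is a free (hence torsion-free) $\cit[x]$-module, a $\cit[x]$-linear relation among the $\Phi_{\partial_{x_i}}(\omega)$ would specialize to a nontrivial $\cit$-linear relation at a generic $a$ — contradiction. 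A little care is needed because $\ait^{r}(\cit)$ is the full Weyl algebra, not just the $\cit[x]$-span of the $\partial_{x_i}$; but $\Phi\wedge\Phi=0$ (equivalently $[C^{(i)},C^{(j)}]=0$ from Lemma~1.1.2) together with $\bigtriangledown$-flatness of $\omega$ forces $\varphi_{\omega}$ to factor through the commutative polynomial algebra in the $\partial_{x_i}$ acting by the commuting endomorphisms $\Phi_{\partial_{x_i}}$, so injectivity reduces to the previous point plus the fact that a polynomial in commuting operators annihilating $\omega$ would again specialize badly. This Weyl-algebra-versus-polynomial-algebra point is the one genuinely delicate step, and I expect it to be the main obstacle.

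For $(GC)^{gl}$ under the extra hypothesis that $F$ contains $u_{1},\dots,u_{n},u_{1}^{-1},\dots,u_{n}^{-1}$: the images of $\omega$ under iterated $R_{0}$ and $\Phi_{\xi}$ include, modulo $m^{a}$, all classes $[P(g_{1},\dots,g_{r},f)\,du/u]$ for $P$ a polynomial; by Example~4.2.3 these span $E^{a}=A_{F_{a}}$ for every $a\in\ait^{r}$ (the Newton polyhedron of $F_{a}$ equals that of $f$, so the ''contains $u_i,u_i^{-1}$'' hypothesis works uniformly in $a$). I would then invoke Nakayama's lemma over $\cit[x]$, or rather the graded/fiberwise argument: a submodule $E'\subseteq E$ with $E'+m^{a}E=E$ for all $a$ and with $E/E'$ finitely generated must equal $E$, since the support of $E/E'$ would otherwise be a nonempty closed subset of $\ait^{r}$. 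Hence $(GC)^{gl}$ holds, so $\omega$ is globally pre-primitive; the final clause — that $\omega^{an}$ is pre-primitive for $\fit_{o}^{an}$ at any origin — is then immediate from Remark~2.1.8, which says precisely that a globally pre-primitive section analytizes to one that is pre-primitive at every point.
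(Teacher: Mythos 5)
Most of your argument coincides with the paper's own proof. Part $1)$ is exactly the reduction to Proposition 4.2.2 read through Remark 2.1.8. For $(IC)^{gl}$ the paper argues precisely as you do on vector fields: an element of the kernel of $\varphi_{\omega}$ is given by finitely many polynomials, these vanish at every point $a$ by Lemma 4.2.4, hence are identically zero. For $(GC)^{gl}$ your fiberwise-(GC)-at-every-$a$ plus Nakayama/support argument is valid, but it is more roundabout than necessary: since $u_{1},\cdots ,u_{n},u_{1}^{-1},\cdots ,u_{n}^{-1}$ generate $K$ as a $\cit$-algebra and $\Phi_{\partial_{x_{i}}}$ acts on $E$ as multiplication by $-g_{i}$ (proof of Theorem 3.3.1), the $\cit [x]$-submodule generated by $\omega$ and its iterated images is already all of $E=\Omega^{n}(U)[x]/d_{u}F\wedge\Omega^{n-1}(U)[x]$; this is why the paper can say ``of course''. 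The last clause is Remark 2.1.8 in both treatments.

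The genuine problem is the step you single out as the main obstacle, the passage from the $\cit [x]$-span of the $\partial_{x_{i}}$ to the full Weyl algebra: the resolution you sketch cannot work, because the statement you are trying to prove there is false. The Higgs field only pairs with vector fields, and if one extends $\varphi_{\omega}$ to higher-order operators by substituting the commuting endomorphisms $\Phi_{\partial_{x_{i}}}$, injectivity necessarily fails: $E$ is free of rank $\mu$ over $\cit [x]$, so the $\mu +1$ elements $\omega ,\Phi_{\partial_{x_{1}}}(\omega ),\cdots ,\Phi_{\partial_{x_{1}}}^{\mu}(\omega )$ are linearly dependent over $\cit (x)$, and clearing denominators produces a nonzero polynomial in $\partial_{x_{1}}$ with $\cit [x]$-coefficients lying in the kernel (more abstractly, no $\cit [x]$-linear map from an infinite-rank free module into a rank-$\mu$ module is injective). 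Pointwise (IC), i.e.\ Lemma 4.2.4, only controls the first-order part of an operator, so your claim that a higher-order relation among the commuting $\Phi_{\partial_{x_{i}}}$ ``would again specialize badly'' is unfounded. The way out is not an argument but a reading of Definition 2.1.7: $(IC)^{gl}$ is injectivity of the period map on derivations $\sum_{i}a_{i}(x)\partial_{x_{i}}$, which is how the paper's proof uses it (``a finite number of polynomials'') and the only reading under which the condition is not vacuously false. With that reading, your specialization argument --- identical to the paper's --- already completes the proof of the first assertion of $2)$, and the Weyl-algebra detour should simply be deleted.
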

\begin{proof}
$1)$ follows from proposition 4.2.2. 
  A section of the kernel of the period map $\varphi_{\omega}$ is given by a finite number of polynomials that vanishes every where by Lemma 4.2.4. This shows the first assertion of $2)$. With the supplementary given assumption, $\omega$ satisfies of course $(GC)^{gl}$. The results about $\omega^{an}$ are then clear.
\end{proof}

\section{Deformations and universal deformations of the canonical Frobenius type structure} 

We keep here the situation and the notations of section 4.

\subsection{Deformations of the canonical Frobenius type structure}

Let $C(x)$, $B_{0}(x)$ and $B_{\infty}$ be the matrices attached to $\fit_{o}$ by Lemma 1.1.2. 
Recall the conditions $(GC)^{gl}$ and $(IC)^{gl}$ for $\omega$, given in Definition 2.1.7.

\begin{lemma} Assume that $\omega$ satisfies $(GC)^{gl}$. Let  
$f_{11},\cdots ,f_{\mu 1}$ be elements of $\cit [x]\{y\}$ (resp. ${\cal O}(\cit^{r})\{y\}$), $y\in\cit$, 
such that $f_{i1}(x,0)=0$ for $i=1,\cdots ,\mu$. Then there exists a unique t-uple of matrices
$$(C(x,y),B_{0}(x,y),B_{\infty})$$
such that\\
$1)$ the coefficients of $C(x,y)$ and $B_{0}(x,y)$ belong to $\cit [x]\{y\}$ (resp. ${\cal O}(\cit^{r})\{y\}$),\\
$2)$ $C(x,0)=C(x)$, $B_{0}(x,0)=B_{0}(x)$ and $\frac{\partial f_{i1}}{\partial y}(x,y)=D_{i1}(x,y)$ if
$$C(x,y)=\sum_{i=1}^{r}C^{(i)}(x,y)dx_{i}+D(x,y)dy,$$ 
$3)$ the relations of Lemma 1.1.2 are satisfied.
\end{lemma}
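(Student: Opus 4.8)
The statement asks to extend the data $(C(x),B_0(x),B_\infty)$ of the canonical Frobenius type structure, in the $y$-direction only, so that the new matrices still satisfy the relations of Lemma 1.1.2 and so that the period-map component in the $y$-direction, namely $D_{i1}(x,y)$, is prescribed to be $\partial f_{i1}/\partial y$. The natural strategy is the one already alluded to in the proof of Theorem 2.3.1 part 1): reconstruct the deformation order by order in powers of $y$, using $(GC)^{gl}$ to see that all the matrix entries are forced (hence uniqueness) and that the forcing is consistent (hence existence), and then check convergence to stay inside $\cit[x]\{y\}$ (resp. ${\cal O}(\cit^r)\{y\}$).

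First I would set up the bookkeeping. Write $C(x,y)=\sum_i C^{(i)}(x,y)\,dx_i + D(x,y)\,dy$ and feed this, together with $B_0(x,y)$ and the (still constant) $B_\infty$, into the equations of Lemma 1.1.2. The key relation $C^{(i)}+\partial B_0/\partial x_i=[B_\infty,C^{(i)}]$ has a $y$-analogue $D+\partial B_0/\partial y=[B_\infty,D]$, the commutation relations become $[C^{(i)},D]=0$ and $[B_0,D]=0$, and the symmetry relations $D^*=D$ must hold; the closedness relation gives $\partial C^{(i)}/\partial y=\partial D/\partial x_i$. The point of $(GC)^{gl}$ is that $\omega=\varepsilon_1$ generates $E$ under iteration of $R_0$ and the $\Phi_\xi$; equivalently, the columns obtained by applying words in $B_0(x)$ and the $C^{(i)}(x)$ to the first basis vector span $\cit[x]^\mu$. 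So every matrix entry of any endomorphism commuting with all the $C^{(i)}$ is determined by its first column, and in particular $D(x,y)$ is determined by $D(x,y)\varepsilon_1 = -\sum_j D_{j1}(x,y)\varepsilon_j = -\sum_j \partial f_{j1}/\partial y\,\varepsilon_j$ together with the requirement that $D$ commute with the $C^{(i)}(x,y)$. This is exactly the mechanism by which the Hertling–Manin construction turns ``initial conditions on the first column'' into a full deformation.

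Next, the induction. Having the data to order $y^{k}$, one wants to produce the order-$y^{k+1}$ coefficients of $B_0$ and of all $C^{(i)}$, $D$. The relation $D+\partial B_0/\partial y=[B_\infty,D]$, read as a linear operator $(1-\mathrm{ad}\,B_\infty)$ acting on $D$, together with $\partial C^{(i)}/\partial y = \partial D/\partial x_i$ and $C^{(i)}+\partial B_0/\partial x_i = [B_\infty,C^{(i)}]$, lets one solve for the $(k+1)$-st coefficients provided the compatibility (Jacobi-type) identities among the already-known coefficients hold; these are precisely the integrability conditions of the flat meromorphic connection $\nabla$ of section 1.2 on $\ppit^1\times(\ait^r\times\{y\text{-disc}\})$, so they are automatically satisfied because $\fit_o$ was itself such a connection and the new terms were built to preserve flatness. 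Uniqueness at each step comes from $(GC)^{gl}$ as above; the symmetry $D^*=D$ and $[B_0,D]=0$ are then consequences (or can be imposed and shown consistent using that $g$ is flat and nondegenerate, exactly as in [Sab, chapitre VI]).

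\textbf{Main obstacle.} The genuinely delicate point is convergence: the recursion determines formal power series in $y$, and one must show they lie in $\cit[x]\{y\}$ (resp. ${\cal O}(\cit^r)\{y\}$). The cleanest route is not to estimate the recursion directly but to invoke the existence part of Hertling–Manin (Theorem 2.3.1 1)) and the universality machinery of section 5.2: since $\omega$ satisfies $(GC)^{gl}$, there is a universal deformation with convergent (holomorphic) structure constants, the prescribed $f_{i1}$ define an admissible one-parameter base change into it, and pulling back gives a convergent solution; uniqueness from $(GC)^{gl}$ then identifies it with the formal one. So the real content is ``$(GC)^{gl}$ $\Rightarrow$ the first column determines everything and the resulting series converge'', and the proof reduces to citing the convergent Hertling–Manin construction plus the uniqueness argument sketched above.
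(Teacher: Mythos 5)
Your first two paragraphs are essentially the paper's route: the order-by-order construction in $y$, with $(GC)^{gl}$ forcing every matrix entry from the prescribed first column, is exactly the content of [HeMa, Theorem 2.5], which the paper simply cites for existence, uniqueness and convergence in $y$; the only point the paper adds is the ring membership, deduced from the fact that $C(x)$ and $B_{0}(x)$ have entries in $\cit[x]$ (Theorem 3.3.1) together with $(GC)^{gl}$.

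The gap is in your ``main obstacle'' paragraph. First, the detour through universal deformations is not available under the hypotheses: the lemma assumes only $(GC)^{gl}$, whereas the existence of a universal deformation (Theorem 2.2.1, 1)) requires a pre-primitive section, i.e.\ (IC) as well; moreover the ``universality machinery of section 5.2'' is built on Corollary 5.1.3, hence on this very lemma, so invoking it here is circular within the paper's logic. Second, even granting a universal deformation, that argument could only give holomorphy of the coefficients on a germ in $(x,y)$, while the actual content of the lemma beyond [HeMa, Theorem 2.5] is precisely the behaviour in $x$: the entries must lie in $\cit[x]\{y\}$ (resp.\ ${\cal O}(\cit^{r})\{y\}$), i.e.\ be polynomial (resp.\ globally defined) in $x$ for the whole affine space, not just near the chosen origin. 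The correct way to get this is the one you sketch earlier and then abandon: since $(GC)^{gl}$ is a generation condition over $\cit[x]$, each coefficient of the $y$-expansion is obtained by linear algebra over $\cit[x]$ from the polynomial data $C(x)$, $B_{0}(x)$ and from the $f_{i1}\in\cit[x]\{y\}$, so it is itself polynomial (resp.\ in ${\cal O}(\cit^{r})$) in $x$; convergence in $y$ is then not a new difficulty but is already part of [HeMa, Theorem 2.5]. (Minor point: the theorem you call 2.3.1 is 2.2.1 in this paper.)
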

\begin{proof} See [HeMa, Theorem 2.5]. It remains to show that the coefficients of  
$C(x,y)$ and $B_{0}(x,y)$ belong to $\cit [x]\{y\}$ (resp. ${\cal O}(\cit^{r})\{y\}$), but this follows from the fact that the coefficients of $C(x)$ and $B_{0}(x)$ belong to $\cit [x]$ (by Theorem 3.3.1) and from condition $(GC)^{gl}$.
\end{proof}

\begin{example} Assume that $f_{11}(x,y)=y$ and $f_{i1}(x,y)=0$ for $i=2,\cdots ,\mu$. Lemma 1.1.2 gives 
$$C^{(i)}_{j1}(x,y)=C^{(i)}_{j1}(x)$$ 
for all $i$ and for all $j$, $D_{11}(x,y)=1$
and $D_{j1}(x,y)=0$ if $j\neq 1$. 
\end{example}

\noindent By induction, one shows that Lemma 5.1.1 remains true if $y=(y_{1},\cdots ,y_{\ell})\in\cit^{\ell}.$

\begin{corollary} Assume that $\omega$ satisfies $(GC)^{gl}$. Then,\\
$1)$ for any choice of functions
$$f_{11},\cdots ,f_{\mu 1}\in {\cal O}(\cit^{r})\{y_{1},\cdots ,y_{\ell }\}$$
such that $f_{i1}(x,0)=0$ there exists a {\em unique} deformation 
$$\tilde{\fit}_{o}^{an}=(\cit^{r}\times (\cit^{\ell},0) , \tilde{E}, \tilde{\bigtriangledown} , \tilde{R}_{0}, \tilde{R}_{\infty},\tilde{\Phi}, \tilde{g})$$
on $\cit^{r}\times (\cit^{\ell},0)$ of the canonical Frobenius type structure $\fit_{o}^{an}$.\\
\noindent $2)$ Any deformation 
$$\tilde{\fit}_{o}^{'an}=(\cit^{r}\times (\cit^{\ell '},0) , \tilde{E}, \tilde{\bigtriangledown} , \tilde{R}_{0}, \tilde{R}_{\infty}, \tilde{\Phi}, \tilde{g})$$ 
$\cit^{r}\times (\cit^{\ell '},0)$ of $\fit_{o}^{an}$ can be obtained as in 1). 
\end{corollary}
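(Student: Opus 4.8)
The corollary is an essentially formal consequence of Lemma 5.1.1 once one has made two observations: first, that the inductive extension of Lemma 5.1.1 to a multidimensional parameter $y=(y_{1},\dots,y_{\ell})$ (announced just before the statement) provides, for every choice of $f_{11},\dots,f_{\mu1}\in\mathcal{O}(\cit^{r})\{y_{1},\dots,y_{\ell}\}$ vanishing at $y=0$, a unique tuple of matrices $(C(x,y),B_{0}(x,y),B_{\infty})$ satisfying the relations of Lemma 1.1.2 and restricting to the matrices of $\fit_{o}^{an}$ at $y=0$; and second, that such a tuple of matrices is exactly the same datum as a Frobenius type structure on $\cit^{r}\times(\cit^{\ell},0)$ together with a $\bigtriangledown$-flat basis $\varepsilon$ whose restriction to $y=0$ is the flat basis of $\fit_{o}^{an}$ (Remark 2.1.8 says $E^{an}$ is canonically trivialized by flat sections, so on the deformation one simply extends $\varepsilon$ by flatness in the $y$-directions). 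Concretely, I would run the construction of section 1.2: from $(C(x,y),B_{0}(x,y),B_{\infty})$ build the connection $\nabla=\pi^{*}\bigtriangledown+\pi^{*}\Phi/\tau-(\tau R_{0}+R_{\infty})d\tau/\tau$ on the trivial bundle over $\ppit^{1}\times\bigl(\cit^{r}\times(\cit^{\ell},0)\bigr)$, observe that the relations of Lemma 1.1.2 are precisely the flatness of $\nabla$, and read off $\tilde{\fit}_{o}^{an}$ by restriction to $\tau=0$ (with the metric $\tilde g$ recovered as in [Sab, ch. VI, 2.b], using $B_{\infty}+B_{\infty}^{*}=rI$).

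For part 1) I would then spell out why the output of section 1.2 is a \emph{deformation} of $\fit_{o}^{an}$ in the sense of Definition 1.3.1: the inclusion $i:\cit^{r}\hookrightarrow\cit^{r}\times(\cit^{\ell},0)$, $x\mapsto(x,0)$, is a closed immersion, and since $C(x,0)=C(x)$, $B_{0}(x,0)=B_{0}(x)$, $B_{\infty}$ unchanged, the pull-back $i^{*}\tilde{\fit}_{o}^{an}$ is $\fit_{o}^{an}$ on the nose. Uniqueness is inherited from the uniqueness in Lemma 5.1.1 (extended to several $y$'s): any deformation is, after choosing the flat frame extending $\varepsilon$, encoded by matrices satisfying the hypotheses $2)$ and $3)$ of that lemma, hence coincides with the one constructed. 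The one point that needs a word is the regularity assertion — that the coefficients of $C(x,y)$ and $B_{0}(x,y)$ lie in $\mathcal{O}(\cit^{r})\{y_{1},\dots,y_{\ell}\}$ and not merely in some formal or smaller ring — and this is exactly where $(GC)^{gl}$ is used, precisely as in the last sentence of the proof of Lemma 5.1.1: the generation condition forces the recursion defining the Taylor coefficients in $y$ to stay inside $\mathcal{O}(\cit^{r})$ because $\omega$ and its iterated images under $R_{0}$ and the $\Phi_{\xi}$ already span $E$ over $\cit[x]$.

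For part 2), given an arbitrary deformation $\tilde{\fit}_{o}^{'an}$ on $\cit^{r}\times(\cit^{\ell'},0)$, I would pick a $\bigtriangledown$-flat basis $\varepsilon'$ extending the canonical flat basis $\varepsilon$ of $\fit_{o}^{an}$ (possible because restricting to $y=0$ gives $\fit_{o}^{an}$, and flat sections extend uniquely), let $C'(x,y)$, $B_{0}'(x,y)$, $B_{\infty}'$ be the associated matrices from Lemma 1.1.2, and set $f_{i1}(x,y):=\Gamma_{i1}'(x,y)$, the primitive-map components attached to $\omega$ and $\varepsilon'$ with $f_{i1}(x,0)$ normalized to $0$ (these are holomorphic since the base is simply connected in the $y$-direction; the closedness needed to define them is Lemma 1.1.2 2)). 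By construction $\partial f_{i1}/\partial y_{k}=D^{(k)}_{i1}$, so the hypotheses of the multidimensional Lemma 5.1.1 hold for this choice, and its uniqueness clause identifies $(C',B_{0}',B_{\infty}')$ — hence $\tilde{\fit}_{o}^{'an}$ — with the deformation produced in $1)$ from these $f_{i1}$. The main obstacle, and the only non-bookkeeping step, is again guaranteeing convergence/holomorphy in the $y$-variables throughout — i.e. that the inductive solution of the equations of Lemma 1.1.2 never leaves $\mathcal{O}(\cit^{r})\{y\}$; but this is not re-proved here, it is imported wholesale from [HeMa, Theorem 2.5] together with the $(GC)^{gl}$ argument already made in Lemma 5.1.1.
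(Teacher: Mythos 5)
Your outline reproduces the easy half of the paper's argument but skips the one point that the paper's proof actually has to establish: the metric. You assert that ``the relations of Lemma 1.1.2 are precisely the flatness of $\nabla$'' and that $\tilde{g}$ is then ``recovered as in [Sab, ch.\ VI, 2.b], using $B_{\infty}+B_{\infty}^{*}=rI$''. This is not correct as stated: flatness of $\nabla$ on $\ppit^{1}\times\bigl(\cit^{r}\times(\cit^{\ell},0)\bigr)$ encodes only the first block of relations of Lemma 1.1.2 (closedness and commutation of the $C^{(i)}$, $[B_{0},C^{(i)}]=0$, $C^{(i)}+\partial_{x_{i}}B_{0}=[B_{\infty},C^{(i)}]$); section 1.2 is explicit that this construction yields a Frobenius type structure \emph{without} metric. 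The adjunction relations $C^{(i)*}=C^{(i)}$, $B_{0}^{*}=B_{0}$ with respect to the flat extension of $g$ are extra conditions on the matrices produced by the recursion of Lemma 5.1.1, and they are exactly what remains to be proved: the paper's proof of the corollary consists in observing that $\tilde{g}$ must be the unique $\tilde{\bigtriangledown}$-flat extension of $g$ (which also settles uniqueness of the metric, a point your uniqueness argument ignores), and then showing by induction, in a $\bigtriangledown$-flat basis adapted to $g$, that if the initial matrices $C(x)$, $B_{0}(x)$ are symmetric then so are $C(x,y)$, $B_{0}(x,y)$ at every step of the recursion (cf.\ [KS, corollary 1.17], [HeMa, lemma 3.2]). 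Citing [HeMa, Theorem 2.5] alone, as you do, only gives the metric-free deformation, so your part 1) has a genuine gap; to close it you must either run this symmetry induction or invoke [HeMa, lemma 3.2]-type compatibility with the pairing explicitly.

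Apart from this, your scheme agrees with the paper's: the multidimensional extension of Lemma 5.1.1 (with $(GC)^{gl}$ guaranteeing that the coefficients stay in ${\cal O}(\cit^{r})\{y\}$) gives existence and uniqueness of the matrices, and for part 2) your recipe of taking the primitive map of the given deformation in the flat frame extending $\varepsilon$ and setting $f_{i1}(x,y)=\Gamma_{i1}(x,y)-\Gamma_{i1}(x,0)$ is exactly the paper's argument.
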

\begin{proof} For $1)$, it remains to show the assertion on the metric $\tilde{g}$ :  $\tilde{g}$ is the unique $\tilde{\bigtriangledown}$-flat metric on  $\tilde{E}$ extending $g$. Starting with a basis adapted to $g$, and keeping the notations of Lemma 5.1.1, it suffices to show that if the initial data are symmetric, then the matrices 
$C(x,y)$ and $B_{0}(x,y)$ are so : one can argue by induction as in the proof of Lemma 5.1.1 (see [KS, corollary 1.17], [HeMa, lemma 3.2] and also [D, paragraphe 3.3]). Let us show $2)$ : if  
\begin{eqnarray}
\tilde{\chi}_{\omega^{an}}' : & \cit^{r}\times (\cit^{\ell '},0) & \rightarrow  \cit^{\mu} \\
                        & (x,y)                   & \mapsto   (\Gamma_{11}(x,y),\cdots ,\Gamma_{\mu 1}(x,y))
\end{eqnarray}
is the primitive map attached to the deformation $\tilde{\fit}_{o}^{'an}$ and to the flat extension of $\omega^{an}$, one puts $f_{i1}(x,y)=\Gamma_{i1}(x,y)-\Gamma_{i1}(x,0)$. 
\end{proof}

\begin{remark} Assume that $\omega^{o}$ satisfies (GC) for the origin $0$. One gets in the same way deformations of  
$\fit_{o,0}^{an}:=\cit \{x\}\otimes \fit_{o}.$
The functions $f_{i1}$ now belong to $\cit \{x,y\}$ and the coefficients of the matrices involved are holomorphic. This is the setting of [HeMa]. 
\end{remark}

 Let $a\in\cit^{r}$ and $\rho_{a}$  be the map defined by $\rho_{a}(x,y)=(x+a,y)$. 

\begin{corollary} Assume that $\omega$ satisfies condition $(GC)^{gl}$. Let $\tilde{\fit}_{o}^{an}$ be the deformation of $\fit_{o}^{an}$ given by Corollary 5.1.3 for a choice of functions $f_{i1}$. 
Then  $\rho^{*}_{a}\tilde{\fit}_{o}^{an}$ is the deformation of the Frobenius type structure $\fit_{a}^{an}$ given by Corollary 5.1.3 for the functions $f_{i1}\circ\rho_{a}$. 
\end{corollary}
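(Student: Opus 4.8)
The plan is to exploit the uniqueness statements already established: $\fit_a^{an}=\rho_a^*\fit_o^{an}$ by (the analytization of) Proposition 3.4.1, and the deformation produced by Corollary 5.1.3 from prescribed functions $f_{i1}$ is the \emph{unique} deformation whose primitive map has those prescribed $y$-increments. So I would compare the two deformations of $\fit_a^{an}$ named in the statement, namely $\rho_a^*\tilde{\fit}_o^{an}$ on one side and the deformation $\mathcal{G}$ of $\fit_a^{an}$ produced by Corollary 5.1.3 applied to the functions $f_{i1}\circ\rho_a$ on the other, and show they coincide by checking that both fall under the uniqueness clause of Corollary 5.1.3 1) relative to the \emph{same} data $(f_{i1}\circ\rho_a)_{i}$.

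First I would observe that $\rho_a^*\tilde{\fit}_o^{an}$ is indeed a deformation of $\rho_a^*\fit_o^{an}=\fit_a^{an}$ (pull-back of a deformation along a base change that is the identity on the $(\cit^{\ell},0)$-factor is a deformation, by Definition 1.3.1 and functoriality of pull-back of bundles-with-connection recalled in section 1.3). Next, since $(GC)^{gl}$ holds for $\omega$ on $\fit_o^{an}$, it also holds for the flat extension of $\omega^{an}$ on $\fit_a^{an}$ — this is just the translation-invariance recorded after Proposition 3.4.1, the matrices for $\fit_a^{an}$ being those of $\fit_o^{an}$ composed with $x\mapsto x+a$ — so Corollary 5.1.3 applies to $\fit_a^{an}$ and its uniqueness clause is available. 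The one remaining point is to identify the defining data of $\rho_a^*\tilde{\fit}_o^{an}$: by construction $\tilde{\fit}_o^{an}$ has primitive map with $y$-increments $f_{i1}(x,y)$, and pulling back by $\rho_a$ replaces $x$ by $x+a$ throughout, so the primitive map of $\rho_a^*\tilde{\fit}_o^{an}$ has $y$-increments $(f_{i1}\circ\rho_a)(x,y)=f_{i1}(x+a,y)$ — exactly the data feeding the Corollary 5.1.3 construction that produces $\mathcal{G}$. Hence both deformations satisfy the hypotheses characterising the unique deformation attached to $(f_{i1}\circ\rho_a)_i$, and so $\rho_a^*\tilde{\fit}_o^{an}=\mathcal{G}$, which is the assertion.

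The only step needing a little care — and the one I would spell out — is the naturality of the primitive map under pull-back: one must check that if $\tilde{\varphi}_{\omega}=\tilde{\varphi}_{\omega}'\circ T\psi$ and the like hold (as in the proof of Theorem 2.2.1 3)), then the primitive map $\tilde{\chi}$ of $\rho_a^*\tilde{\fit}_o^{an}$ is literally $\tilde{\chi}\circ\rho_a$ up to the normalisation $\Gamma_{i1}(x,0)$ built into Corollary 5.1.3, so that the $y$-increment $\Gamma_{i1}(x,y)-\Gamma_{i1}(x,0)$ transforms to $\Gamma_{i1}(x+a,y)-\Gamma_{i1}(x+a,0)=f_{i1}(x+a,y)$. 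This is immediate from the fact that $\rho_a$ acts only on the $x$-variables and the integration defining $\Gamma_{i1}$ is in the $(x,y)$-directions with $\Gamma_{i1}$ normalised to vanish at $y=0$; the flat basis $\varepsilon$ and the matrices $C^{(i)},D_{i1}$ all pull back by precomposition with $x\mapsto x+a$, so nothing else changes. Once this bookkeeping is in place the result follows purely formally from the uniqueness in Corollary 5.1.3, so I do not expect any genuine obstacle — the content is entirely in matching up the two descriptions.
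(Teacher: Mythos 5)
Your proof is correct and follows the same route as the paper: the paper's own proof is simply ``Follows from Proposition 3.4.1,'' and your argument is exactly the intended expansion of that, namely $\fit_a^{an}=\rho_a^{*}\fit_o^{an}$ combined with the uniqueness clause of Lemma 5.1.1/Corollary 5.1.3 after checking that the pulled-back data correspond to the functions $f_{i1}\circ\rho_a$.
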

\begin{proof}
 Follows from  Proposition 3.4.1.
\end{proof}

\subsection{Local universal deformations of the canonical Frobenius type structure}

We consider in this section only germs : we fix an origin, say $0$, and we work with 
$\fit^{an}_{o,0}$, the germ of $\fit_{o}^{an}$ at $0$.
Universal deformations of $\fit^{an}_{o,0}$ are defined in section 1.3.

\begin{lemma} Assume that $\omega^{o}$ satisfies (GC). Then $\fit^{an}_{o,0}$ has a universal deformation.
\end{lemma}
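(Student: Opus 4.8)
The plan is to apply the theorem of Hertling and Manin (Theorem 2.2.1.1) in the analytic germ setting, for which the only hypothesis to check is the existence of a pre-primitive section of $\fit^{an}_{o,0}$. Recall that pre-primitivity for the germ at $0$ means that $\omega^{o}$ satisfies both $(GC)$ and $(IC)$. The condition $(GC)$ is given by hypothesis. So the core of the proof reduces to exhibiting a $\bigtriangledown$-flat section $\omega$ of $E$ such that $\omega^{o}$ also satisfies $(IC)$, and then invoking Theorem 2.2.1.1 1).

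The natural candidate is of course the class $\omega$ of $\frac{du}{u}$ in $E$, the distinguished section studied in section 4; by Proposition 4.1.1 it is $\bigtriangledown$-flat. However, $\omega^{o}$ satisfies $(IC)$ only when the deformation $F$ is injective (Proposition 4.2.2 1), equivalently when the classes of $g_{1},\dots,g_{r}$ are linearly independent in $A_f$ (Lemma 4.2.1 1), and there is no reason for an arbitrary subdiagram deformation to be injective. So the first step is to reduce to the injective case. If $F$ is not injective, I would discard those $g_i$'s that are linearly dependent on the others modulo $\partial f$, i.e. pass to an injective subdeformation $F'$ of $f$ involving only a $\cit$-linearly independent subfamily of the $g_i$'s whose classes still span the same subspace of $A_f$; since $(GC)$ for $\omega^{o}$ only asks that $\omega^{o}$, $f$ and the $g_i$'s generate $A_f$ under iteration, and the span of the classes is unchanged, $\omega^{o}$ still satisfies $(GC)$ for $F'$. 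By Proposition 4.2.2, $\omega^{o}$ then satisfies both $(IC)$ and $(GC)$ for $F'$, so $\omega^{o}$ is pre-primitive for $\fit^{an}_{o,0}$ associated with $F'$.

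Alternatively — and this is probably cleaner — one invokes the good subdiagram deformations of section 3.6: by Proposition 3.6.2 there exists a good (hence injective) subdiagram deformation $F^{good}$, and by Lemma 3.6.4 its canonical Frobenius type structure induces, and is induced from, the relevant maximal one. One then checks $(GC)$ is preserved under this passage exactly as above. Either way, once we have an injective subdiagram deformation whose $\omega^{o}$ still satisfies $(GC)$, Proposition 4.2.2 delivers a pre-primitive section, and Theorem 2.2.1.1 1) immediately gives a universal deformation of $\fit^{an}_{o,0}$.

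The main obstacle is the bookkeeping in the reduction to the injective case: one must make sure that throwing away redundant $g_i$'s does not destroy $(GC)$, and that the resulting smaller Frobenius type structure is genuinely $\fit^{an}_{o,0}$ (or a canonical Frobenius type structure to which the hypothesis applies) rather than some unrelated object. Here one uses that, for a subdiagram deformation, $E^{o}=\Omega^n(U)/df\wedge\Omega^{n-1}(U)=A_f\cdot\frac{du}{u}$ is intrinsic to $f$ and $R_0^{o}$ is multiplication by $f$, so the pair $(E^{o},R_0^{o},R_\infty,g^{o})$ does not depend on the chosen $g_i$'s at all; only the Higgs field $\Phi^{o}$ records the $g_i$'s, via $-\Phi^{o}_{\partial_{x_i}}(\omega^{o})=[g_i\frac{du}{u}]$ (Lemma 4.2.1). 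Thus $(GC)$ for the full family $(g_i)$ and $(GC)$ for an independent spanning subfamily are literally the same condition on $E^{o}$, and the reduction is harmless. Modulo this verification, the statement is a direct corollary of Proposition 4.2.2 and Theorem 2.2.1.1.
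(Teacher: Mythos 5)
Your first paragraph is essentially the paper's own argument: the paper also reduces the lemma to Hertling--Manin, starting from a deformation of $\fit^{an}_{o,0}$ furnished by Remark 5.1.4 (this is where (GC) enters) and then using (IC) to choose the functions $f_{i1}$ so that the extended primitive map $\tilde{\chi}_{\omega^{an}}$ is a local diffeomorphism, which by [HeMa, p.~123] is exactly the universality criterion. So the paper, like you, needs (IC) in addition to (GC); it simply asserts that $\omega^{o}$ satisfies it, which is justified in the situation where the lemma is actually applied (Corollary 5.2.2, where $F$ is injective, so Proposition 4.2.2~1) applies). Up to that point your proposal and the paper agree.

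The genuine gap is in your reduction to the injective case. Replacing $F$ by an injective sub-deformation $F'$ with $r'<r$ parameters and applying Theorem 2.2.1~1) produces a universal deformation of the germ $\fit_{o,0}^{'an}$ attached to $F'$, which lives over $(\cit^{r'},0)$ --- not of $\fit^{an}_{o,0}$, which is a structure over $(\cit^{r},0)$. Your ``intrinsic'' argument only identifies the restrictions at the origin $(E^{o},R_{0}^{o},R_{\infty},g^{o})$; it says nothing about the germ over $(\cit^{r},0)$, whose Higgs field has $r$ components, and the discrepancy cannot be patched. Indeed, if (IC) fails for $F$ the period map $\varphi_{\omega}^{o}$ of $\fit^{an}_{o,0}$ is not injective, whereas a universal deformation $\tilde{\fit}'$ obtained from Hertling--Manin has invertible period map; since period maps compose under base change as $\varphi_{\omega}=\tilde{\varphi}_{\omega}'\circ T\psi$ (this is used in the proof of Theorem 2.2.1~3)), any map $\psi$ inducing $\fit^{an}_{o,0}$ from $\tilde{\fit}'$ fails to be an immersion at $0$, so $\tilde{\fit}'$ is not even a deformation of $\fit^{an}_{o,0}$ in the sense of Definition 1.3.1, let alone a universal one. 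Hence your argument proves the statement only when $\omega^{o}$ also satisfies (IC) (e.g.\ when $F$ is injective), which is precisely the reading under which the paper's proof and its use in Corollary 5.2.2 operate; the non-injective case is not reached by your reduction, and the final claim that Theorem 2.2.1~1) ``immediately gives a universal deformation of $\fit^{an}_{o,0}$'' is unjustified.
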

\begin{proof} Since $\omega^{o}$ satisfies the condition (GC), we can start with a deformation $\tilde{\fit}^{an}_{o,0}$
of $\fit^{an}_{o,0}$ given by Remark 5.1.4. Let 

\begin{eqnarray}
\tilde{\chi}_{\omega^{an}} : & (\cit^{r}\times \cit^{\ell},0) & \rightarrow  (\cit^{\mu},0) \\
                      & (x,y)                   & \mapsto   (\Gamma_{11}(x,y),\cdots ,\Gamma_{\mu 1}(x,y))  
\end{eqnarray}

\noindent be the primitive map attached to $\tilde{\fit}^{an}_{o,0}$ and to the flat extension of $\omega^{an}$. The Frobenius type structure
$$\tilde{\fit}^{an}_{o,0}=((\cit^{r}\times \cit^{\ell},0) , \tilde{E}, \tilde{\bigtriangledown} , \tilde{R}_{0}, \tilde{R}_{\infty}, \tilde{\Phi}, \tilde{g})$$
is a universal deformation of the canonical Frobenius type structure 
$$\fit^{an}_{o,0}=((\cit^{r},0), E, \bigtriangledown , R_{0}, R_{\infty},\Phi ,g)$$
if and only if $\tilde{\chi}_{\omega^{an}}$ is a diffeomorphism: this is precisely what gives [HeMa, p. 123].
Now, $\omega^{o}$ satisfies also (IC): one can choose the $f_{i1}$'s such that $\tilde{\chi}_{\omega^{an}}$ is (at least locally) invertible. We get in this way a universal deformation of $\fit^{an}_{o,0}$.
\end{proof}

\begin{corollary} Let $F$ be an injective subdiagram deformation of $f$. Then,\\ 
$1)$ the Frobenius type structure $\fit^{an}_{o,0}$ has a universal deformation, if $\omega^{o}$ satisfy (GC).\\ 
$2)$ Assume that $F$ contains the monomials 
$u_{1},\cdots ,u_{n},u_{1}^{-1},\cdots ,u_{n}^{-1}.$
Then the Frobenius type structure $\fit_{o}^{an}$ has a universal deformation in the neighborhood of any point of $\cit^{r}$.
\end{corollary}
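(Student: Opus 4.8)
The plan is to derive the Corollary directly from Lemma 5.2.1 by checking, in each of the two cases, that the generation condition $(GC)$ holds at the relevant points, so that the Lemma applies verbatim. For part $1)$, the deformation $F$ is assumed injective (so $\omega^{o}$ satisfies $(IC)$ by Proposition 4.2.2 $1)$, though $(IC)$ is not even needed here) and the hypothesis is precisely that $\omega^{o}$ satisfies $(GC)$; thus Lemma 5.2.1 applies to $\fit^{an}_{o,0}$ and yields a universal deformation of the germ at $0$. I would simply invoke Lemma 5.2.1 and note that nothing more is required.

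For part $2)$, the point is that the extra hypothesis---that $F$ contains the monomials $u_{1},\cdots,u_{n},u_{1}^{-1},\cdots,u_{n}^{-1}$---is a condition on the $g_{i}$'s alone, not on the origin, and hence it forces $(GC)$ for $\omega^{a}$ at \emph{every} point $a\in\cit^{r}$. The mechanism is the one already recorded in Example 4.2.3 together with Proposition 4.2.2: since $F_{a}=F(\cdot,a)$ is again convenient and nondegenerate with the same Newton polyhedron as $f$, and since $-\Phi_{\omega}^{a}(\partial_{x_{i}})=[g_{i}\,du/u]$ in $E^{a}=\Omega^{n}(U)/dF_{a}\wedge\Omega^{n-1}(U)$ by the computation in the proof of Lemma 4.2.1, the classes of $u_{1},\cdots,u_{n},u_{1}^{-1},\cdots,u_{n}^{-1}$ (hence of $1,u_{1},\cdots$ after applying $R_{0}^{a}=$ multiplication by $F_{a}$) generate $A_{F_{a}}$ exactly as in Example 4.2.3; that is, $\omega^{a}$ satisfies $(GC)$ for the origin $a$. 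Having checked $(GC)$ at $a$, I apply Lemma 5.2.1 to $\fit^{an}_{o,a}$, the germ of $\fit_{o}^{an}$ at $a$ (its underlying point-wise Frobenius type structure being $(E^{a},R_{0}^{a},R_{\infty},g^{a})$ by Theorem 3.3.1), and obtain a universal deformation in a neighborhood of $a$. Since $a$ was arbitrary, this gives the statement.

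The only mild subtlety---and the step I would be most careful about---is the translation bookkeeping: Lemma 5.2.1 is stated for the fixed origin $0$, so to apply it at a general $a$ I should either restate it with origin $a$ (harmless, the proof is identical) or use Proposition 3.4.1 to reduce to the origin $0$ via $\rho_{a}$, replacing $\fit_{o}$ by $\fit_{a}$ whose underlying germ at $0$ is $(E^{a},R_{0}^{a},R_{\infty},g^{a})$. I would phrase it the first way for brevity. There is no real obstacle here; the corollary is a packaging of Lemma 5.2.1, Proposition 4.2.2 and Example 4.2.3, and the proof is one or two sentences.

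\begin{proof}
$1)$ This is exactly Lemma 5.2.1 applied to $\fit^{an}_{o,0}$: the hypothesis on $F$ ensures, via Proposition 4.2.2 $1)$, that $\omega^{o}$ satisfies $(IC)$, and we assume that $\omega^{o}$ satisfies $(GC)$, so $\fit^{an}_{o,0}$ has a universal deformation.\\
$2)$ Let $a\in\cit^{r}$. By Theorem 3.3.1, the germ $\fit^{an}_{o,a}$ of $\fit_{o}^{an}$ at $a$ restricts over $a$ to the canonical Frobenius type structure $(E^{a},R_{0}^{a},R_{\infty},g^{a})$ attached to the convenient and nondegenerate Laurent polynomial $F_{a}=F(\cdot,a)$, whose Newton polyhedron equals that of $f$. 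In the basis of $E^{a}$ one has $R_{0}^{a}(\omega^{a})=[F_{a}\,du/u]$ and $-\Phi_{\omega}^{a}(\partial_{x_{i}})=[g_{i}\,du/u]$, as in the proof of Lemma 4.2.1. Since $F$ (hence $F_{a}$) contains the monomials $u_{1},\cdots ,u_{n},u_{1}^{-1},\cdots ,u_{n}^{-1}$, the argument of Example 4.2.3, together with Proposition 4.2.2 applied to $F_{a}$, shows that $\omega^{a}$ satisfies $(IC)$ and $(GC)$ for the origin $a$. Applying Lemma 5.2.1 (with the origin $a$ in place of $0$; the proof is unchanged, or one may reduce to $0$ using Proposition 3.4.1 and the map $\rho_{a}$) yields a universal deformation of $\fit^{an}_{o,a}$, that is, a universal deformation of $\fit_{o}^{an}$ in a neighborhood of $a$. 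As $a$ was arbitrary, this proves the claim.
\end{proof}
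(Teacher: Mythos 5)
Your proof is correct and follows essentially the same route as the paper: part $1)$ is injectivity $\Rightarrow$ (IC) (Proposition 4.2.2) plus the assumed (GC) fed into Lemma 5.2.1, and part $2)$ amounts to pre-primitivity of $\omega^{an}$ at every origin $a$, which the paper obtains by citing Proposition 4.3.1 $2)$ and you re-derive pointwise via Lemma 4.2.4 and the Example 4.2.3 argument applied to $F_{a}$. Your extra remark on the translation to the origin $a$ (restating Lemma 5.2.1 at $a$, or using Proposition 3.4.1 and $\rho_{a}$) is a harmless refinement of a step the paper leaves implicit.
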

\begin{proof} $1)$ Since the deformation $F$ is injective, $\omega^{o}$ satisfies also (IC) because of Lemma 4.2.2 1): $\omega^{an}$ is thus pre-primitive for the origin $0$. $2)$ Follows from Proposition 4.3.1.
\end{proof}

\subsection{Semi-global universal deformations of the canonical Frobenius type structure}

We globalize here the results of section 5.2 along $\cit^{r}$ ({\em i.e} along the subdiagram monomials).
We give first the analog of definition 1.3.1 3): 

\begin{definition} Let $\tilde{\fit}_{o}^{an}$ be a deformation of $\fit_{o}^{an}$ on $\cit^{r}\times (\cit^{\ell},0)$ as in Corollary 5.1.3. We say that  $\tilde{\fit}_{o}^{an}$ is a {\em semi-global} universal deformation of $\fit_{o}^{an}$ if, for any other deformation $\tilde{\fit}_{o}^{'an}$ 
on $\cit^{r}\times (\cit^{\ell '},0)$ of $\fit_{o}^{an}$, there exists a unique map 
 $$\Psi :\cit^{r}\times (\cit^{\ell '},0)\rightarrow \cit^{r}\times (\cit^{\ell },0),$$
inducing the identity on $\cit^{r}$, such that $\Psi^{*}\tilde{\fit}_{o}^{an}=\tilde{\fit}_{o}^{'an}.$
\end{definition}

We show first that such semi-global universal deformations exist if $F$ is a good subdiagram deformation (Definition 3.6.1). The following lemma is an analog of Lemma 5.2.1: 

\begin{lemma} Let $\fit_{o}^{good}$ be the canonical Frobenius type structure attached to a good subdiagram deformation of $f$. Then :\\
$1)$ the primitive map $\chi_{\omega}$ attached to $\fit_{o}^{good}$ takes the form
$$\chi_{\omega}(x_{1},\cdots ,x_{r})=(-x_{1}+G_{1}(x_{2},\cdots ,x_{r}),-x_{2}+G_{2}(x_{3},\cdots ,x_{r}),\cdots ,-x_{r-1}+G_{r-1}(x_{r}),-x_{r},0,\cdots,0)$$
where $G_{1}$, $G_{2}$,..., $G_{r-1}$ are suitable polynomial functions.\\
$2)$ Assume moreover that $\omega$ satisfies $(GC)^{gl}$. Choose $f_{i1}(x,y)=0$ for $i=1,\cdots ,r$, $f_{i1}(x,y)=y_{i-r}$ for $i=r+1,\cdots ,\mu$ and let $\tilde{\fit}_{o}^{good, an}$ be the deformation of $\fit_{o}^{good, an}$ given by the corollary 5.1.3. Its primitive map 
$$\tilde{\chi}_{\omega^{an}} : \cit^{r}\times (\cit^{\mu -r},0)  \rightarrow  \cit^{r}\times (\cit^{\mu -r},0)$$
takes the form
$$\tilde{\chi}_{\omega^{an}}(x_{1},\cdots ,x_{r},y_{1},\cdots ,y_{\mu -r}) =(-x_{1}+G_{1}(x_{2},\cdots ,x_{r}) ,\cdots ,-x_{r-1}+G_{r-1}(x_{r}),-x_{r},y_{1},\cdots ,y_{\mu -r})$$
and $\tilde{\fit}_{o}^{good, an}$ is a semi-global universal deformation of $\fit_{o}^{good, an}$.
\end{lemma}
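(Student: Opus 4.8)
The plan is to compute the primitive map $\chi_\omega$ directly from the definition of a good subdiagram deformation. Recall that by Definition 2.1.2, $\chi_\omega^\varepsilon(x)=\sum_j\Gamma_{j1}(x)\varepsilon_j$ where $d\Gamma_{j1}=\sum_i C^{(i)}_{j1}(x)\,dx_i$ and $\Gamma_{j1}(0)=0$, and that $C^{(i)}_{j1}$ is the $\varepsilon_j$-component of $-\Phi^{(i)}(\varepsilon_1)$. For a good subdiagram deformation, Definition 3.6.1 gives $-\Phi^{(i)}(\varepsilon_1)=\varepsilon_i+\sum_{j<i}a^j_i(x)\varepsilon_j$ for $i=1,\dots,r$ (so $a^1_1=0$, i.e. $-\Phi^{(1)}(\varepsilon_1)=\varepsilon_1$), and $-\Phi^{(i)}(\varepsilon_1)$ has no $\varepsilon_1,\dots$ contribution to the higher $\varepsilon_j$ with $j>r$ in the sense that $C^{(i)}_{j1}=0$ for $j>r$ because the only indices $i$ for which $\Phi^{(i)}$ is defined are $i\le r$ and $-\Phi^{(i)}(\varepsilon_1)$ is a combination of $\varepsilon_1,\dots,\varepsilon_i$ only. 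So first I would observe $\Gamma_{j1}\equiv 0$ for $j>r$. Next, for $1\le j\le r$, the matrix entry $C^{(i)}_{j1}$ equals $\delta_{ij}$ when $i<j$ is impossible in the $\varepsilon_i$-coefficient of $-\Phi^{(i)}(\varepsilon_1)$... more precisely, $C^{(i)}_{j1}=\delta_{ij}+[\![j<i]\!]a^j_i(x)$. Then $d\Gamma_{j1}=dx_j+\sum_{i>j}a^j_i(x)\,dx_i$, and since $a^j_i\in\cit[x]$ depends only on the variables that are "later" in the appropriate triangular sense, integrating gives $\Gamma_{j1}(x)=-(-x_j)+G_j(x_{j+1},\dots,x_r)$ — here the sign conventions from the period map ($\varphi_\omega=-\sum\Phi_{\partial_{x_i}}(\omega)dx_i$) produce the overall $-x_j$. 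The key point making this clean is that $[C^{(i)},C^{(j)}]=0$ and $\partial_i C^{(j)}_{11}=\partial_j C^{(i)}_{11}$ (Lemma 1.1.2), which forces the $a^j_i$ to be "integrable" in exactly the triangular pattern claimed; I would spell out that the dependence of $a^j_i$ is on $x_{i+1},\dots$ only, or at least that the closedness relations make $\Gamma_{j1}$ have the stated form with $G_j$ depending on $x_{j+1},\dots,x_r$.

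For part 2), I would apply Corollary 5.1.3 with the chosen functions $f_{i1}$: since $f_{i1}=0$ for $i\le r$, the matrices $C^{(i)}(x,y)$ and $B_0(x,y)$ for the $x$-directions are unchanged, so $\Gamma_{j1}(x,y)=\Gamma_{j1}(x)$ for $j\le r$; and since $f_{i1}(x,y)=y_{i-r}$ for $i>r$, Example 5.1.2 (applied repeatedly, one $y$-variable at a time) gives $D_{j1}(x,y)=\partial_y f_{j1}=1$ for $j=r+k$ in the $y_k$-direction and $0$ otherwise, hence $\Gamma_{j1}(x,y)=y_{j-r}$ for $j>r$ (using $\Gamma_{j1}(x,0)=0$ from part 1). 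This gives the displayed formula for $\tilde\chi_{\omega^{an}}$.

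Finally, to get semi-global universality, I invoke the semi-global analog of the Hertling–Manin criterion: $\tilde\chi_{\omega^{an}}$ must be a diffeomorphism $\cit^r\times(\cit^{\mu-r},0)\to\cit^r\times(\cit^{\mu-r},0)$. From the explicit triangular form $(x_1,\dots,x_r,y)\mapsto(-x_1+G_1(x_2,\dots,x_r),\dots,-x_r,y_1,\dots,y_{\mu-r})$ the Jacobian is block lower-triangular with diagonal entries $-1$ (in the $x$-block) and $1$ (in the $y$-block), so it is invertible everywhere, and the map is in fact a polynomial automorphism in the $x$-variables extended by the identity in $y$. By the universality criterion (the argument of Lemma 5.2.1 transposed to the semi-global setting of Definition 5.3.1, using that $\omega$ is globally pre-primitive under $(GC)^{gl}$ together with $(IC)^{gl}$ which holds since $F^{good}$ is injective by Proposition 4.3.1 2) — actually $(IC)^{gl}$ follows already from Proposition 4.3.1 2) and injectivity), $\tilde\fit_o^{good,an}$ is a semi-global universal deformation. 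The main obstacle I anticipate is justifying rigorously that the $G_j$ depend only on $x_{j+1},\dots,x_r$: this requires carefully tracking which variables the coefficients $a^j_i(x)$ genuinely depend on, using the flatness/commutativity relations of Lemma 1.1.2 rather than just the pointwise formula of Definition 3.6.1; once that triangularity is established, invertibility of $\tilde\chi_{\omega^{an}}$ and hence universality are immediate.
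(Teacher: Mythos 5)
Your plan is correct and is essentially the paper's own proof: compute $d\Gamma_{j1}$ from the good-deformation condition $-\Phi^{(i)}(\varepsilon_{1})=\varepsilon_{i}+\sum_{j<i}a^{j}_{i}(x)\varepsilon_{j}$ to get the triangular form of $\chi_{\omega}$, handle the $y$-directions via Example 5.1.2 / Corollary 5.1.3, and obtain semi-global universality as in Lemma 5.2.1 because $\tilde{\chi}_{\omega^{an}}$ is a polynomial automorphism in $x$ extended by the identity in $y$. The only slips are cosmetic: in the paper's convention $C^{(i)}$ is the matrix of $\Phi^{(i)}$ itself (so $C^{(i)}_{j1}=-\delta_{ij}-a^{j}_{i}$ for $j\leq i$, giving the $-x_{j}$ directly, with no sign juggling), and your anticipated ``main obstacle'' is vacuous, since $\partial_{x_{i}}\Gamma_{j1}=C^{(i)}_{j1}=0$ for $i<j$ and $=-1$ for $i=j$ already force $\Gamma_{j1}+x_{j}$ to be a polynomial in $x_{j+1},\cdots ,x_{r}$ only, without tracking the variable dependence of the $a^{j}_{i}$.
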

\begin{proof}
$1)$  By definition of the good subdiagram deformations, we have, in $G_{0}$, 
$$-\Phi^{(i)}(\omega )=\varepsilon_{i}+\sum_{j<i}a^{j}_{i}(x)\varepsilon_{j}$$
for all $i$, with $a^{j}_{i}\in\cit [x]$.
Let $\Gamma_{j1}$ be such that
$$d\Gamma_{j1}(x)=\sum_{i=1}^{r}C^{(i)}_{j1}(x)dx_{i}$$
with the initial data $\Gamma_{j1}(0)=0$. One has $d\Gamma_{j1}(x)=0$ for $j>r$ hence $\Gamma_{j1}(x)=0$ for $j>r$. 
In the same way, one gets $d\Gamma_{r1}(x)=-dx_{r}$
and
$$d\Gamma_{j1}(x)=-dx_{j}+\sum_{i=j+1}^{r}C^{(i)}_{j1}(x)dx_{i}$$
for $j=1,\cdots ,r-1$. The result follows. Now $2)$ follows from $1)$ and Example 5.1.2. One gets the universality as in the proof of Lemma 5.2.1 (notice that $\tilde{\chi}_{\omega^{an}}^{-1}$ is also polynomial in $x$).
\end{proof}

\begin{corollary} 
The canonical Frobenius type structures attached to a {\em maximal} subdiagram deformation have semi-global universal deformations, if $\omega$ satisfies $(GC)^{gl}$.
\end{corollary}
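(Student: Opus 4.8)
The plan is to reduce the statement to the explicit semi-global universal deformation produced in Lemma 5.3.2 for a \emph{good and maximal} subdiagram deformation, and then to transport that universal property along the canonical isomorphism relating any two maximal subdiagram deformations.

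First I would fix a maximal subdiagram deformation $F^{max}$ with canonical Frobenius type structure $\fit_{o}^{max}$, and invoke Proposition 3.6.2 to get a good and maximal subdiagram deformation $F^{good,max}$ with canonical Frobenius type structure $\fit_{o}^{good,max}$. By Proposition 3.5.1 $1)$ there is a linear automorphism $\psi$ of $\ait^{\nu}$ with $\fit_{o}^{good,max}=\psi^{*}\fit_{o}^{max}$, and after analytization the same relation holds between $\fit_{o}^{good,max,an}$ and $\fit_{o}^{max,an}=\fit_{o}^{an}$ (pull-back commutes with analytization and $\psi$ is linear). Since the condition $(GC)^{gl}$ of Definition 2.1.7 is intrinsic to a Frobenius type structure and $\psi$ is a base change inducing an isomorphism on tangent bundles, the hypothesis ``$\omega$ satisfies $(GC)^{gl}$'' is equivalent for $\fit_{o}^{max}$ and for $\fit_{o}^{good,max}$. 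Hence Lemma 5.3.2 $2)$ applies: with $f_{i1}=0$ for $i\le\nu$ and $f_{i1}=y_{i-\nu}$ for $i>\nu$, Corollary 5.1.3 produces a deformation $\tilde{\fit}_{o}^{good,max,an}$ on $\cit^{\nu}\times(\cit^{\mu-\nu},0)$ which is a semi-global universal deformation of $\fit_{o}^{good,max,an}$.

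Next I would set $\tilde{\fit}:=(\psi^{-1}\times\mathrm{id})^{*}\tilde{\fit}_{o}^{good,max,an}$, where $\psi^{-1}\times\mathrm{id}$ denotes the automorphism $(x,y)\mapsto(\psi^{-1}(x),y)$ of $\cit^{\nu}\times(\cit^{\mu-\nu},0)$. Restricting over $y=0$ and using $(\psi^{-1})^{*}\fit_{o}^{good,max,an}=(\psi^{-1})^{*}\psi^{*}\fit_{o}^{an}=\fit_{o}^{an}$ shows that $\tilde{\fit}$ is a deformation of $\fit_{o}^{an}$ over $\cit^{\nu}\times(\cit^{\mu-\nu},0)$, hence one of the form allowed in Definition 5.3.1 by Corollary 5.1.3 $2)$. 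To check that $\tilde{\fit}$ is semi-global universal, I would take any deformation $\tilde{\fit}'$ of $\fit_{o}^{an}$ on $\cit^{\nu}\times(\cit^{\ell'},0)$ as in Corollary 5.1.3; then $(\psi\times\mathrm{id})^{*}\tilde{\fit}'$ is a deformation of $\psi^{*}\fit_{o}^{an}=\fit_{o}^{good,max,an}$, so there is a unique base change $\Theta:\cit^{\nu}\times(\cit^{\ell'},0)\to\cit^{\nu}\times(\cit^{\mu-\nu},0)$ inducing the identity on $\cit^{\nu}$ with $\Theta^{*}\tilde{\fit}_{o}^{good,max,an}=(\psi\times\mathrm{id})^{*}\tilde{\fit}'$. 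Putting $\Psi:=(\psi\times\mathrm{id})\circ\Theta\circ(\psi^{-1}\times\mathrm{id})$, a routine manipulation of pull-backs gives $\Psi^{*}\tilde{\fit}=\tilde{\fit}'$, and $\Psi$ induces the identity on $\cit^{\nu}$ since $\psi\circ\mathrm{id}\circ\psi^{-1}=\mathrm{id}$; the uniqueness of $\Psi$ follows from that of $\Theta$, conjugation by $\psi\times\mathrm{id}$ being a bijection on the admissible base changes. This proves that $\tilde{\fit}$ is a semi-global universal deformation of $\fit_{o}^{an}$.

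The argument is therefore essentially formal once Lemma 5.3.2 and Proposition 3.5.1 are granted: the only points needing a line of justification are that $(GC)^{gl}$ and the class of deformations described in Corollary 5.1.3 are stable under the linear base change $\psi\times\mathrm{id}$, and this is where I expect the (very mild) friction to lie. No genuinely hard step arises, since all the analytic content was already packaged in the explicit form of the primitive map $\tilde{\chi}_{\omega^{an}}$ in Lemma 5.3.2 $2)$.
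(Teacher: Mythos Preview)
Your proposal is correct and follows essentially the same route as the paper: invoke Proposition~3.6.2 to get a good and maximal subdiagram deformation, apply Lemma~5.3.2 to it, and then transport the resulting semi-global universal deformation along the isomorphism between any two maximal canonical Frobenius type structures (the paper cites Lemma~3.6.3~2), which in turn rests on Proposition~3.5.1~1)). You simply make the transport step and the stability of $(GC)^{gl}$ under the linear base change $\psi$ more explicit than the paper does.
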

\begin{proof} Apply Lemma 5.3.2 to a good and maximal subdiagram deformation (it exists by Proposition 3.6.2) and use Lemma  3.6.3 2).
\end{proof}

\noindent Finally, using Corollary 5.1.5, we get

\begin{corollary} Let $F$ be a maximal subdiagram deformation of $f$ and assume that $\omega$ satisfies $(GC)^{gl}$ (for $F$). For any $a\in\cit^{r}$, the Frobenius type structure  $\fit_{a}^{an}$ has a semi-global universal deformation of $\tilde{\fit}_{a}^{an}$ satisfying
$$\tilde{\fit}_{a}^{an}=\rho_{a}^{*}\tilde{\fit}_{o}^{an}.$$
\end{corollary}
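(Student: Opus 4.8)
The plan is to combine the two previously established facts: the existence of a semi-global universal deformation for $\fit_{o}^{an}$ (Corollary 5.3.3) and the compatibility of the deformation construction with the translation $\rho_{a}$ (Corollary 5.1.5). First I would recall that, since $F$ is maximal and $\omega$ satisfies $(GC)^{gl}$, Corollary 5.3.3 produces a semi-global universal deformation $\tilde{\fit}_{o}^{an}$ of $\fit_{o}^{an}$ on $\cit^{r}\times(\cit^{\mu-r},0)$; concretely, by the proof of Lemma 5.3.2 one may take this deformation to be attached to the functions $f_{i1}$ with $f_{i1}=0$ for $i\leq r$ and $f_{i1}=y_{i-r}$ for $i>r$ (after passing, via Lemma 3.6.3 and Proposition 3.5.1, to a good maximal deformation, which does not change the isomorphism class). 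Set $\tilde{\fit}_{a}^{an}:=\rho_{a}^{*}\tilde{\fit}_{o}^{an}$.

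Next I would identify $\tilde{\fit}_{a}^{an}$ as a deformation of $\fit_{a}^{an}$. By Proposition 3.4.1 we have $\fit_{a}^{an}=\rho_{a}^{*}\fit_{o}^{an}$ (the translation intertwines the canonical Frobenius type structures attached to $F$ and to the translated deformation $F(\cdot,x+a)$), so pulling back the closed immersion $\cit^{r}\hookrightarrow\cit^{r}\times(\cit^{\mu-r},0)$ by $\rho_{a}$ shows $\tilde{\fit}_{a}^{an}$ restricts to $\fit_{a}^{an}$ on $\cit^{r}\times\{0\}$, i.e. it is indeed a deformation of $\fit_{a}^{an}$. Moreover, by Corollary 5.1.5, $\tilde{\fit}_{a}^{an}=\rho_{a}^{*}\tilde{\fit}_{o}^{an}$ is precisely the deformation of $\fit_{a}^{an}$ produced by Corollary 5.1.3 for the functions $f_{i1}\circ\rho_{a}$; since $f_{i1}\circ\rho_{a}=f_{i1}$ for our choice (the $f_{i1}$ do not depend on $x$), it is attached to the same functions $f_{i1}$.

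Finally I would check semi-global universality of $\tilde{\fit}_{a}^{an}$. Since $\rho_{a}$ is an isomorphism of $\cit^{r}$ inducing the identity on tangent spaces in the obvious sense, pulling back by $\rho_{a}$ is an equivalence on the category of deformations, sending the universal object to a universal object; alternatively, one observes that $F_{a}$ is again a maximal subdiagram deformation of the convenient and nondegenerate Laurent polynomial $F_{a}$ for which $\omega$ still satisfies $(GC)^{gl}$ (by Theorem 3.2.1 3) the Newton data are unchanged, and $(GC)^{gl}$ is preserved under translation since the $g_{i}$ are unchanged), so Corollary 5.3.3 applies directly to $\fit_{a}^{an}$ and, by uniqueness of semi-global universal deformations (analog of the uniqueness statement after Definition 1.3.1), the deformation $\rho_{a}^{*}\tilde{\fit}_{o}^{an}$ must coincide with it. The main obstacle is the bookkeeping in the second step: one must verify carefully that the translation $\rho_{a}$, which moves the chosen origin, is compatible with the particular choice of $f_{i1}$ used to build the semi-global universal deformation, and that the primitive map of the translated deformation still has the triangular normal form of Lemma 5.3.2 so that invertibility (hence universality) is preserved — but this is exactly the content of Corollary 5.1.5 together with the $x$-independence of the $f_{i1}$, so no genuinely new computation is required.
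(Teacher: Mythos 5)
Your proof is correct and follows essentially the same route as the paper, which obtains this corollary directly by combining Corollary 5.3.3 (via Lemma 5.3.2 and the good maximal deformation) with the translation compatibility of Corollary 5.1.5. Your additional bookkeeping — the $x$-independence of the chosen $f_{i1}$ and the transfer of universality under the automorphism $\rho_{a}$ — merely makes explicit what the paper leaves implicit.
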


\section{Application: construction of Frobenius manifolds}

 Let $f$ be a convenient and nondegenerate Laurent polynomial, $\mu$ its global Milnor number,
$$F(u,x)=f(u)+\sum_{i=1}^{r}x_{i}g_{i}(u)$$ 
be a subdiagram deformation of $f$, 
$$\fit_{o}=(\ait^{r}, E, \bigtriangledown , R_{0}, R_{\infty},\Phi ,g)$$
be the canonical Frobenius type structure attached to $F$ by Theorem 3.3.1 and $\fit_{o}^{an}$ its analytization (see remark 2.1.8). Let $\omega$ be the class of  $\frac{du}{u}$
in $E$.

\subsection{Local setting}

We work in this section with punctual germs. Let
$\fit^{an}_{o,0}$ be the germ of $\fit_{o}^{an}$ at $0$.
 The following theorems show that one can equip $(\cit^{\mu},0)$ with a canonical Frobenius structure: $(\cit^{\mu},0)$ is thus a Frobenius manifold.

\begin{theorem} Assume that the subdiagram deformation $F$ is injective and surjective. Then :\\
$1)$ $\omega^{an}$ is a $\bigtriangledown$-flat and homogeneous section of $E^{an}$.\\
$2)$ $\omega^{an}$ is pre-primitive for the origin $0$.\\
$3)$ $\fit^{an}_{o,0}$ has a universal deformation $\tilde{\fit}^{an}_{o,0}$.\\
$4)$ The pre-primitive section $\omega^{an}$ defines a Frobenius structure on the base of the universal deformation $\tilde{\fit}^{an}_{o,0}$. The Frobenius structures obtained in this way on the bases of any two universal deformations are isomorphic.
\end{theorem}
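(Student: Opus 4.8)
The plan is to verify the four assertions in turn, each following almost directly from results already established in sections 3, 4 and 5, so that the ``proof'' is essentially a matter of assembling the pieces.

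First I would note that assertion $1)$ is Proposition 4.1.1: the class $\omega$ of $du/u$ in $E$ equals $\varepsilon_{1}$, which is $\bigtriangledown$-flat and satisfies $R_{\infty}(\varepsilon_{1})=\alpha_{1}\varepsilon_{1}$, hence is homogeneous; passing to the analytization $\fit_{o}^{an}$ and restricting to the germ at $0$ preserves these properties since flat sections and eigenvectors of $R_{\infty}$ are transported functorially (Remark 2.1.8). Assertion $2)$ is then immediate from Proposition 4.3.1 $1)$: the hypothesis that $F$ is injective and surjective gives (IC) via Proposition 4.2.2 $1)$ (equivalently Lemma 4.2.1 $1)$, since the $g_{i}$ are $\cit$-linearly independent subdiagram polynomials) and (GC) via Proposition 4.2.2 $2)$, so $\omega^{an}$ is pre-primitive for the origin $0$.

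Next, assertion $3)$ follows from Theorem 2.2.1 $1)$ (Hertling--Manin) applied to the germ $\fit_{o,0}^{an}$, whose hypothesis is exactly that it has a pre-primitive section, supplied by $2)$; alternatively one can cite Lemma 5.2.1 directly, since $\omega^{o}$ satisfies (GC). This produces a universal deformation $\tilde{\fit}_{o,0}^{an}$ characterized by its primitive map being a local diffeomorphism. Finally, assertion $4)$ is Theorem 2.2.1 $2)$ and $3)$: since $\omega^{an}$ is flat, pre-primitive \emph{and} homogeneous, its flat extension to $\tilde{E}$ is again homogeneous (because $\tilde{R}_{\infty}$ carries flat sections to flat sections), and the period map $\tilde{\varphi}_{\omega^{an}}$ is an isomorphism onto $\Theta_{\tilde{M}}$, along which one transports $\tilde{\Phi}$, $\tilde{g}$, $\tilde{R}_{0}$, $\tilde{R}_{\infty}$ to obtain the Frobenius structure; the independence of the choice of universal deformation is the isomorphism $T\psi$ argument of Theorem 2.2.1 $3)$. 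The base of the universal deformation is $(\cit^{\mu},0)$ because $\omega^{an}$ pre-primitive means the primitive map lands in $\cit^{\mu}$ and, by universality, is a diffeomorphism there.

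The only point requiring a little care — and thus the main (very mild) obstacle — is checking that the analytization and germ-taking operations genuinely preserve all the structural data, i.e.\ that $\fit_{o,0}^{an}$ is a bona fide Frobenius type structure to which Hertling--Manin applies with $M=(\cit^{r},0)$ a germ; this is handled by Remark 2.1.8 together with the observation that $E^{an}$ is trivialized by global flat sections, so all matrices $B_{0}$, $C^{(i)}$, $B_{\infty}$ of Lemma 1.1.2 survive analytization with holomorphic (indeed polynomial) coefficients. Once that is granted, everything else is a direct invocation of the cited results, and no new computation is needed.
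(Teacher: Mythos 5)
Your proposal is correct and follows essentially the same route as the paper, which proves the theorem by exactly this assembly: $1)$ from Proposition 4.1.1, $2)$ from Proposition 4.2.2 (equivalently Proposition 4.3.1), $3)$ from $2)$ and Lemma 5.2.1, and $4)$ from Theorem 2.2.1 $2)$ and $3)$. The extra care you take about analytization (Remark 2.1.8) is consistent with, though left implicit in, the paper's argument.
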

\begin{proof} 
$1)$ follows from Proposition 4.1.1,
$2)$ from proposition 4.2.2 and 
$3)$ from $2)$ and Lemma 5.2.1. Last,
$4)$ follows from Theorem 2.2.1 2) et 3).
\end{proof}

 Let us show now that the Frobenius structures constructed in Theorem 6.1.1 do not depend on the choice of the subdiagram deformations.

\begin{lemma} Let $F$ (resp. $G$) be an injective and surjective subdiagram deformations of $f$, $\fit_{o}$ (resp. $\git_{o}$) be the canonical Frobenius type structure attached to $F$ (resp. $G$). If they exist, the universal deformations
$\tilde{\fit}^{an}_{o,0}$ of $\fit^{an}_{o,0}$ and 
$\tilde{\git}^{an}_{o,0}$ of $\git^{an}_{o,0}$ are isomorphic. 
\end{lemma}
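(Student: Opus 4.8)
The strategy is to reduce the comparison of the two universal deformations to a comparison at the level of Frobenius type structures on $\ait^{r}$, where the uniqueness statement of Theorem 3.3.1 (and the comparison results of section 3.5) are available. First I would pass from $F$ and $G$ to a common good and maximal subdiagram deformation $F^{good,max}$, which exists by Proposition 3.6.2. By Proposition 3.5.1 and Lemma 3.6.3 2), the canonical Frobenius type structures $\fit_{o}$ and $\git_{o}$ are each \emph{induced} (via a linear base change $\Psi$, resp.\ $\Psi'$, on $\ait^{r}$) from the canonical Frobenius type structure $\fit_{o}^{good,max}$ attached to $F^{good,max}$; after analytization and restriction to the germ at $0$, the same holds for $\fit^{an}_{o,0}$ and $\git^{an}_{o,0}$. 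Since $F$, $G$, $F^{good,max}$ are all injective and surjective (surjectivity of $F^{good,max}$ follows from Remark 3.1.2, or from Lemma 3.6.3 1) once one knows $F$ is surjective), the form $\omega^{an}$ is pre-primitive for the origin $0$ in each case by Proposition 4.2.2.

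\textbf{Key steps.} Second, I would invoke the existence of a semi-global (hence in particular a germ-level) universal deformation of $\fit^{good,max, an}_{o,0}$ with respect to the pre-primitive form $\omega^{an}$, which is Corollary 5.3.3 combined with Lemma 5.2.1. Call it $\tilde{\fit}^{good,max,an}_{o,0}$, with base $(\cit^{\mu},0)$ via the primitive map. Third — and this is the heart of the argument — I would observe that $\tilde{\fit}^{good,max,an}_{o,0}$ is \emph{also} a deformation of $\fit^{an}_{o,0}$: indeed $\fit^{an}_{o,0} = \Psi^{*}\fit^{good,max,an}_{o,0}$, and $\Psi$ is a closed immersion onto a linear subspace of $\ait^{r}$, so the universal deformation of $\fit^{good,max,an}_{o,0}$ restricts/extends to a deformation of $\fit^{an}_{o,0}$ over the same base. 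Moreover the flat extension of $\omega^{an}$ is the same section, so its period map is still an isomorphism; by Theorem 2.2.1 1) a deformation whose period map is an isomorphism is universal, hence $\tilde{\fit}^{good,max,an}_{o,0}$ is a universal deformation of $\fit^{an}_{o,0}$ as well. The identical argument with $\Psi'$ shows it is a universal deformation of $\git^{an}_{o,0}$. Fourth, by the uniqueness of universal deformations (stated just after Definition 1.3.1, and reflected in Theorem 2.2.1 1)), $\tilde{\fit}^{an}_{o,0}$ is isomorphic to $\tilde{\fit}^{good,max,an}_{o,0}$ and $\tilde{\git}^{an}_{o,0}$ is isomorphic to $\tilde{\fit}^{good,max,an}_{o,0}$, whence $\tilde{\fit}^{an}_{o,0} \cong \tilde{\git}^{an}_{o,0}$.

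\textbf{Main obstacle.} The delicate point is the third step: checking that the universal deformation of the ``larger'' Frobenius type structure $\fit^{good,max,an}_{o,0}$ genuinely serves as a universal deformation of the ``smaller'' one $\fit^{an}_{o,0}$, i.e.\ that pulling back along the closed immersion $\Psi$ is compatible with taking universal deformations and with the pre-primitive form. Concretely one must verify that the flat extension of $\omega$ used to build the universal deformation of $\fit^{good,max,an}_{o,0}$ restricts, under $\Psi$, to the flat extension of $\omega$ for $\fit^{an}_{o,0}$ — which is clear because in all cases $\omega = [du/u]$ and the flat extensions are canonical — and then that the period map of $\tilde{\fit}^{good,max,an}_{o,0}$, being an isomorphism onto $(\cit^{\mu},0)$, certifies universality over $\fit^{an}_{o,0}$ via the ``if and only if'' in Theorem 2.2.1 1). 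Once this is granted, everything else is bookkeeping with the base changes $\Psi$, $\Psi'$ and the uniqueness of universal deformations. (Alternatively, one could run the whole comparison directly between $\fit_{o}$ and $\git_{o}$ using that both maximal completions are isomorphic by Proposition 3.5.1 1), but routing through $F^{good,max}$ keeps the primitive maps in the explicit triangular form of Lemma 5.3.2 and makes the invertibility transparent.)
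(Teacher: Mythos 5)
Your proposal is correct and follows essentially the paper's own route: extend to maximal subdiagram deformations, identify the resulting canonical Frobenius type structures via Proposition 3.5.1 (you normalize further through the good maximal deformation of Proposition 3.6.2 and Lemma 3.6.3, and make explicit the period-map criterion of Theorem 2.2.1 1) that the paper uses implicitly to transfer universality across the closed immersion), then conclude by uniqueness of universal deformations. One small caution: Corollary 5.3.3 and Lemma 5.3.2 2) require $(GC)^{gl}$, which is not among the hypotheses here, but this is harmless since the germ-level Lemma 5.2.1 (which you also invoke, and which only needs $(GC)$ at the origin, guaranteed by surjectivity) is all your argument actually uses.
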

\begin{proof} 
Extend $F$ ({\em resp.} $G$) to a maximal deformation $F^{max}$ ({\em resp.} $G^{max}$): this is always possible because $F$ and $G$ are injective.
It follows from Proposition 3.5.1 that the respective canonical Frobenius structures $\fit^{max}_{o}$ and $\git^{max}_{o}$ are isomorphic. 
Thus, since $\fit^{max,an}_{o,0}$ is a deformation $\fit^{an}_{o,0}$, $\tilde{\fit}^{an}_{o,0}$
is a universal deformation of $\fit^{max,an}_{o,0}$ and also a universal deformation of $\git^{max,an}_{o,0}$. Because $\tilde{\git}^{an}_{o,0}$ is a universal deformation of $\git^{max,an}_{o,0}$, we deduce that $\tilde{\fit}^{an}_{o,0}$ and 
$\tilde{\git}^{an}_{o,0}$ are isomorphic.
\end{proof}

\begin{theorem} Let $F$ and $G$ be two injective and surjective subdiagram deformations of $f$, $\fit_{o}$ (resp. $\git_{o}$) be the canonical Frobenius type structure attached to $F$ (resp. $G$). Then:\\
$1)$ $\omega^{an}$ is a $\bigtriangledown$-flat and homogeneous section of the bundles associated with $\fit_{o}$ and $\git_{o}$.\\
$2)$ $\omega^{an}$ is pre-primitive for the origin $0$.\\
$3)$ $\fit^{an}_{o}$ (resp. $\git^{an}_{o}$) has a universal deformation $\tilde{\fit}^{an}_{o}$ (resp. 
$\tilde{\git}^{an}_{o}$). $\tilde{\fit}^{an}_{o}$ and $\tilde{\git}^{an}_{o}$ are isomorphic.\\
$4)$ The Frobenius structures defined by the pre-primitive form $\omega^{an}$ according to Theorem 6.1.1 do not depend, up to isomorphism, on the choice of the  subdiagram deformations $F$ and $G$.
\end{theorem}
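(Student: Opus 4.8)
The statement is essentially a reformulation of Theorem 6.1.1 together with Lemma 6.1.2, so the plan is to assemble the already-proven ingredients. First I would dispose of $1)$: the section $\omega^{an}$ is the flat extension of the class of $\frac{du}{u}$, and Proposition 4.1.1 gives that this class is $\bigtriangledown$-flat and an eigenvector of $R_{\infty}$ in the bundle attached to \emph{any} subdiagram deformation; in particular this holds simultaneously for $\fit_{o}$ and $\git_{o}$. For $2)$, since $F$ and $G$ are injective and surjective, Proposition 4.2.2 shows that $\omega^{o}$ satisfies both $(IC)$ and $(GC)$ for the origin $0$, hence $\omega^{an}$ is pre-primitive for the origin $0$ for each of the two Frobenius type structures.

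Next, for $3)$, the existence of the universal deformations $\tilde{\fit}^{an}_{o}$ of $\fit^{an}_{o,0}$ and $\tilde{\git}^{an}_{o}$ of $\git^{an}_{o,0}$ follows from $2)$ via Lemma 5.2.1 (or Theorem 2.2.1 $1)$). Their isomorphism is exactly Lemma 6.1.2: one extends $F$ and $G$ to maximal subdiagram deformations $F^{max}$ and $G^{max}$ (possible by injectivity), invokes Proposition 3.5.1 to see that $\fit^{max}_{o}$ and $\git^{max}_{o}$ are isomorphic, and then uses the uniqueness of universal deformations—$\tilde{\fit}^{an}_{o}$ is a universal deformation of the common maximal Frobenius type structure, and so is $\tilde{\git}^{an}_{o}$, whence they are isomorphic.

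Finally, for $4)$, I would apply Theorem 2.2.1 $2)$ to carry the pre-primitive form $\omega^{an}$ through the period map and obtain Frobenius structures on the bases of $\tilde{\fit}^{an}_{o}$ and $\tilde{\git}^{an}_{o}$; the isomorphism from $3)$ identifies these two Frobenius manifolds. Together with part $3)$ of Theorem 2.2.1 (independence of the choice of universal deformation up to isomorphism), this yields that the resulting Frobenius structure depends, up to isomorphism, neither on the choice of universal deformation nor on the choice of subdiagram deformation $F$ or $G$. The main subtlety—already handled in Lemma 6.1.2—is the passage through the maximal deformations: one must check that $\tilde{\fit}^{an}_{o}$ really is a universal deformation of the \emph{deformed} structure $\fit^{max,an}_{o,0}$ and not merely of $\fit^{an}_{o,0}$, which is where the transitivity of universal deformations and the diffeomorphism criterion of Theorem 2.2.1 $1)$ are used. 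No new computation is required beyond what the quoted results already provide.
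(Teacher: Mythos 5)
Your proposal is correct and follows essentially the same route as the paper: parts $1)$ and $2)$ are the content of Theorem 6.1.1 (via Proposition 4.1.1 and Proposition 4.2.2), part $3)$ is exactly Lemma 6.1.2 (extension to maximal deformations, Proposition 3.5.1, and uniqueness of universal deformations), and part $4)$ follows from Theorem 2.2.1 $2)$--$3)$. No discrepancy with the paper's argument.
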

\begin{proof} Because of Theorem 6.1.1, it is enough to show $3)$ and $4)$: $3)$ follows from Lemma 6.1.2 and $4)$ is then clear (see Theorem 2.2.1). 
\end{proof}

\noindent This shows Theorem 1 in the introduction.

\subsection{Globalization}

 Recall that $\fit_{a}$ denotes the canonical Frobenius type structure attached to $F_{a}=F(.,a)$ and that $\rho_{a}$ is the map defined by $\rho_{a}(x,y)=(x+a,y)$ for $(x,y)\in\cit^{r}\times (\cit^{\ell},0)$. Good subdiagram deformations are defined in section 3.6.

\begin{theorem} Assume that the subdiagram deformation $F$ is injective. Then\\
$1)$ $\omega$ is a $\bigtriangledown$-flat, homogeneous section of $E$.\\
Assume moreover that $\omega$ satisfies $(GC)^{gl}$. Then :\\
$2)$ $\fit_{o}^{an}$ has deformations on $\cit^{r}\times (\cit^{\ell}, 0)$.\\ 
$3)$ If $F$ is a maximal subdiagram deformation then $\fit_{o}^{an}$ has a semi-global universal deformation $\tilde{\fit}_{o}^{an}$ and, for any 
$a\in\cit^{r}$, $\fit_{a}^{an}$ has a semi-global universal deformation $\tilde{\fit}_{a}^{an}$ satisfying
 $$\tilde{\fit}_{a}^{an}=\rho^{*}_{a}\tilde{\fit}_{o}^{an}.$$
The period map $\tilde{\varphi}_{\omega^{an}}$ (resp. $\tilde{\varphi}_{\omega^{an}} '$) attached to 
$\tilde{\fit}_{o}^{an}$ (resp. $\tilde{\fit}_{a}^{an}$) defines a Frobenius structure on $\cit^{r}\times (\cit^{\mu -r},0)$ and
$\tilde{\varphi}_{\omega^{an}}'=\rho_{a}\circ\tilde{\varphi}_{\omega^{an}}$.

\end{theorem}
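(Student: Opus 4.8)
The plan is to treat Theorem 6.2.1 as the natural "semi-global" analogue of Theorem 6.1.1 and to assemble it from the pieces already established. Part $1)$ is immediate: Proposition 4.1.1 gives that $\omega$, the class of $\tfrac{du}{u}$ in $E$, equals $\varepsilon_1$ and is therefore $\bigtriangledown$-flat and an eigenvector of $R_\infty$; since injectivity of $F$ is assumed, no supplementary hypothesis is needed here. Part $2)$ is a direct invocation of Corollary 5.1.3 $1)$: once $\omega$ satisfies $(GC)^{gl}$, for any choice of functions $f_{i1}\in{\cal O}(\cit^{r})\{y_1,\ldots,y_\ell\}$ vanishing at $y=0$ there is a unique deformation $\tilde{\fit}_o^{an}$ of $\fit_o^{an}$ on $\cit^r\times(\cit^\ell,0)$, so such deformations exist.

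For part $3)$, I would first reduce to the good and maximal case. Since $F$ is maximal, Proposition 3.5.1 (via Lemma 3.6.3 $2)$) gives an isomorphism between $\fit_o$ and $\fit_o^{good,max}$, the canonical Frobenius type structure of a good and maximal subdiagram deformation $F^{good,max}$, which exists by Proposition 3.6.2; this isomorphism is compatible with the flat section $\omega$ (both are $\varepsilon_1$). Corollary 5.3.3 then asserts precisely that $\fit_o^{an}$ has a semi-global universal deformation $\tilde{\fit}_o^{an}$ — concretely the one built in Lemma 5.3.2 $2)$ with $f_{i1}=0$ for $i\le r$ and $f_{i1}=y_{i-r}$ for $i>r$, whose primitive map is the explicit polynomial diffeomorphism displayed there. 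The translated statement $\tilde{\fit}_a^{an}=\rho_a^*\tilde{\fit}_o^{an}$ being a semi-global universal deformation of $\fit_a^{an}$ is exactly Corollary 5.1.5, whose proof rests on Proposition 3.4.1 ($\fit_a=\rho_a^*\fit_o$) and the transformation of the data $f_{i1}\mapsto f_{i1}\circ\rho_a$.

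It remains to produce the Frobenius structures and the relation $\tilde{\varphi}_{\omega^{an}}'=\rho_a\circ\tilde{\varphi}_{\omega^{an}}$. For this I would check that $\omega^{an}$ is pre-primitive for the origin on each fiber: $(IC)$ holds by Lemma 4.2.4 (injectivity of $F$, applied at $F_a$, which is convenient and nondegenerate with the same Newton polyhedron), and $(GC)$ at each point follows from $(GC)^{gl}$ by the standard Nakayama-type argument (generation being an open condition, as in Remark 2.1.5 $2)$). Then Theorem 2.2.1 $2)$ applied fiberwise — or rather its semi-global version, where the period map attached to the flat extension of $\omega^{an}$ is an isomorphism because the deformation is (semi-global) universal, so one transports $\tilde{R}_0,\tilde{R}_\infty,\tilde{g},\tilde{\Phi}$ onto $\Theta$ of the base $\cit^r\times(\cit^{\mu-r},0)$ — yields the Frobenius structure, homogeneous since $R_\infty$ preserves flat sections. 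Finally, from $\tilde{\fit}_a^{an}=\rho_a^*\tilde{\fit}_o^{an}$ and the fact (Lemma 5.3.2) that the primitive map of the good model is $\tilde{\chi}_{\omega^{an}}(x,y)=(-x_1+G_1,\ldots,-x_r,y)$ plus a translation by $a$ in the $x$-block, one reads off $\tilde{\chi}_{\omega^{an}}'=\rho_{-a}\circ\tilde{\chi}_{\omega^{an}}\circ\rho_a$ — no, more simply, tracking $\rho_a$ through the construction as in Corollary 5.1.5 gives the corresponding identity for period maps, $\tilde{\varphi}_{\omega^{an}}'=\rho_a\circ\tilde{\varphi}_{\omega^{an}}$, by the same computation as in the proof of Theorem 2.2.1 $3)$ with the base change $\psi=\rho_a$.

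The main obstacle I anticipate is the passage from the punctual universality statement of Hertling--Manin (Theorem 2.2.1, which is stated for germs of complex analytic manifolds) to the semi-global statement over $\cit^r\times(\cit^{\mu-r},0)$ needed here: one must verify that the primitive map of the good model, being polynomial in $x$ with polynomial inverse in $x$ (noted in Lemma 5.3.2), is an honest diffeomorphism of $\cit^r\times(\cit^{\mu-r},0)$, not merely a germ, and that the universal-deformation characterization and the transport of structures survive globalization along the $x$-directions. Once that bookkeeping is in place — essentially repackaging Lemma 5.3.2, Corollary 5.3.3, and Corollary 5.1.5 — the rest is a concatenation of results already proved.
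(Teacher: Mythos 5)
Your proposal is correct and follows essentially the same route as the paper, which proves $1)$ by Proposition 4.1.1, $2)$ by Corollary 5.1.3, and $3)$ by combining Corollary 5.3.3, Corollary 5.3.4 (itself resting on Lemma 5.3.2, Corollary 5.1.5 and Proposition 3.4.1) with Theorem 2.2.1. Your additional bookkeeping — the reduction to the good and maximal model and the observation that the primitive map there is polynomial in $x$ with polynomial inverse, so the germwise Hertling--Manin argument globalizes along $\cit^{r}$ — is exactly what the cited Lemma 5.3.2 and Corollary 5.3.3 supply.
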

\begin{proof} 
1) Follows from Proposition 4.1.1,
2) from Corollary 5.1.3 and 3) from Corollary 5.3.3, Corollary 5.3.4 and 
Theorem 2.2.1.
\end{proof}

\begin{corollary} Assume that $\omega$ satisfies $(GC)^{gl}$ for $F$.\\
$1)$ The Frobenius structures on $(\cit^{\mu},0)$, attached by 
Theorem 6.1.1 to the convenient and nondegenerate Laurent polynomials $F_{a}$
are isomorphic to the pull-back by $\rho_{a}$ of the one attached to $f$.\\
$2)$ For {\em any} injective subdiagram deformation $G$ of $f$, the Frobenius structures on $(\cit^{\mu},0)$, attached by 
Theorem 6.1.1 to the convenient and nondegenerate Laurent polynomials $G_{a}$
are isomorphic to the pull-back by $\rho_{a}$ of the one attached to $f$.\\
\end{corollary}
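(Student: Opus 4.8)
The plan is to reduce the statement to the semi-global comparison of Theorem 6.2.1 $3)$, which already treats \emph{maximal} subdiagram deformations, using Theorem 6.1.3 (Theorem 1 of the introduction: the Frobenius structure attached by Theorem 6.1.1 to a convenient and nondegenerate Laurent polynomial is independent of the injective and surjective subdiagram deformation used to produce it) to move freely between deformations, together with Proposition 3.4.1, which identifies $\fit_a$ with $\rho_a^{*}\fit_o$.

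First I would reduce $2)$ to $1)$. Extend both $F$ and an arbitrary injective subdiagram deformation $G$ to maximal subdiagram deformations $F^{max}$ and $G^{max}$ of $f$. Reducing $(GC)^{gl}$ for $F$ modulo $(x)$ shows that the family defining $F$ is a lattice in $A_f$, so Remark 3.1.2 and Lemma 3.6.3 $1)$ give that every maximal subdiagram deformation of $f$ is surjective. I would then upgrade surjectivity to $(GC)^{gl}$ for $F^{max}$ and for $G^{max}$: since $E=A_{F^{max}}$ is free over $\cit[x]$ by Theorem 3.2.1, and since $f$ and all its subdiagram translates share the same Newton polyhedron — hence the same space ${\cal N}_{<1}K$ of subdiagram monomials and the same fibre dimension $\mu$ — a fibrewise Nakayama argument reduces the claim to the surjectivity of the various maximal subdiagram deformations, already available. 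By Proposition 3.5.1 the canonical Frobenius type structures $\fit_o^{max}$ and $\git_o^{max}$ are isomorphic, hence so are their analytizations and their semi-global universal deformations; and for every $a$ the polynomial $G_a$ equals $G^{max}(\cdot,(a,0))$, while $G^{max}(\cdot,\cdot+(a,0))$ is an injective and surjective (by $(GC)^{gl}$ for $G^{max}$) subdiagram deformation of $G_a$, so by Theorem 6.1.3 the Frobenius structure attached to $G_a$ is the one attached to $G^{max}(\cdot,(a,0))$ through $G^{max}(\cdot,\cdot+(a,0))$. This brings $2)$ back to $1)$ applied to the maximal deformation $G^{max}$.

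For $1)$, now with $F$ maximal and $\omega$ satisfying $(GC)^{gl}$, I would invoke Theorem 6.2.1 $3)$: the analytization $\fit_o^{an}$ admits a semi-global universal deformation $\tilde{\fit}_o^{an}$, one has $\tilde{\fit}_a^{an}=\rho_a^{*}\tilde{\fit}_o^{an}$ for every $a$, and the attached period maps satisfy $\tilde{\varphi}_{\omega^{an}}'=\rho_a\circ\tilde{\varphi}_{\omega^{an}}$. Transporting, as in Hertling and Manin's construction, the structures on $\tilde{E}$ to the base along $\tilde{\varphi}_{\omega^{an}}$ yields the canonical Frobenius structure attached to $f$, and transporting them along $\tilde{\varphi}_{\omega^{an}}'$ yields the canonical Frobenius structure attached to $F_a$ — here one uses Proposition 3.4.1 to know that the relevant Frobenius type structure is $\fit_a=\rho_a^{*}\fit_o$, and Theorem 6.1.3 to know that realizing $F_a$ through the particular deformation $F(\cdot,\cdot+a)$ is harmless. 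Since an isomorphism of universal deformations intertwines the period maps (Theorem 2.2.1 $3)$), the identity $\tilde{\varphi}_{\omega^{an}}'=\rho_a\circ\tilde{\varphi}_{\omega^{an}}$ says exactly that the Frobenius structure attached to $F_a$ is $\rho_a^{*}$ of the one attached to $f$; passing to germs at $0$ gives the assertion on $(\cit^{\mu},0)$.

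I expect the main difficulty to be bookkeeping rather than any isolated computation: one must check that the local construction of Theorem 6.1.1 agrees, through the canonical isomorphisms of Theorem 2.2.1 $3)$, with the germ at the appropriate point of the semi-global construction of Theorem 6.2.1, and that under the successive changes of deformation (from $F$ to $F^{max}$, from $F^{max}$ to $F^{max}(\cdot,\cdot+a)$, and the linear embedding $a\mapsto(a,0)$ of parameter spaces) the map $\rho_a$ is carried to the corresponding translation. The single genuinely new point is the upgrade of $(GC)^{gl}$ from $F$ to the maximal deformations; everything else is an assembly of Proposition 3.4.1, Proposition 3.5.1, Theorem 3.2.1, Theorem 6.1.3 and Theorem 6.2.1.
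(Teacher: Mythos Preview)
Your proposal is correct and follows essentially the same route as the paper: reduce to maximal subdiagram deformations via Theorem 6.1.3 4), invoke Theorem 6.2.1 3) for the comparison $\tilde{\fit}_a^{an}=\rho_a^{*}\tilde{\fit}_o^{an}$, and use Propositions 3.4.1 and 3.5.1 to pass between different deformations. The paper proves $1)$ first and then derives $2)$ from it (via $\git_o=\Phi^{*}\fit_o$ and $\git_a=\Psi^{*}\fit_a$), while you reverse the order, but the ingredients are the same; your explicit justification of $(GC)^{gl}$ for the maximal extensions via Nakayama is a point the paper's proof passes over silently.
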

\begin{proof}  Because of Theorem 6.1.3 4) one may assume that $F$ is a maximal subdiagram deformation and thus apply Theorem 6.2.1 3) to get $1)$. Let us show 2) : let $G$ be any injective subdiagram deformation of $f$. Without loss of generality, one may assume that $G$ is maximal. 
It follows from proposition 3.5.1 that the canonical Frobenius type structures attached to $G$ and $F$ (say, $\git_{o}$ and $\fit_{o}$)
satisfy $\git_{o}=\Phi^{*}\fit_{o}$ where $\Phi$ is an isomorphism.
Thus, for any $a\in\cit^{r}$, $\git_{a}=\Psi^{*}\fit_{a}$
where $\Psi$ is also an isomorphism by Proposition 3.4.1 and 2) follows from 1).
\end{proof}

\noindent This shows Theorem 2 in the introduction.

\end{document}